\newcommand{\R}{\mathbb{R}}
\newcommand{\eps}{\varepsilon}
\def\dx{\,{\rm d}x}
\def\dy{\,{\rm d}y}
\def\dt{\,{\rm d}t}
\def\dta{\,{\rm d}\tau}
\def\ds{\,{\rm d}s}
\def\e{\varepsilon}
\def\vphi{\varphi}
\newtheorem{theorem}{Theorem}
\newtheorem{proposition}[theorem]{Proposition}
\newtheorem{lemma}[theorem]{Lemma}
\newtheorem{corollary}[theorem]{Corollary}
\theoremstyle{definition}
\newtheorem{definition}[theorem]{Definition}
\theoremstyle{remark}
\newtheorem{remark}[theorem]{Remark}
\DeclareMathOperator{\sgn}{sgn}
\numberwithin{equation}{section}
\numberwithin{theorem}{section}
\author{Piotr Biler}
\address{P. Biler:
 Instytut Matematyczny, Uniwersytet Wroc\l awski,
 pl. Grunwaldzki 2/4, 50-384 Wroc\-\l aw, Poland}
\email{Piotr.Biler@math.uni.wroc.pl}
\author{Cyril Imbert}
\address{C. Imbert: CNRS, UMR 8050, 
Universit\'e   Paris-Est Cr\'eteil, 
61 av. du G\'en\'eral de Gaulle, 94010 Cr\'eteil, cedex,  France}
\email{cyril.imbert@u-pec.fr} 
\urladdr{http://www.perso-math.univ-mlv.fr/users/imbert.cyril/}
\author{Grzegorz Karch}
\address{G. Karch:
 Instytut Matematyczny, Uniwersytet Wroc\l awski,
 pl. Grunwaldzki 2/4, 50-384 Wroc\-\l aw, Poland}
\email{Grzegorz.Karch@math.uni.wroc.pl}
\urladdr{http://www.math.uni.wroc.pl/~karch}
\title[Nonlocal porous medium equation]{Nonlocal  porous medium equation: 
 Barenblatt  profiles and other weak solutions}
\begin{document}

\begin{abstract} 
A degenerate nonlinear nonlocal evolution equation is considered; it
can be understood as a porous medium equation whose pressure law is
nonlinear and nonlocal. We show the existence of sign changing weak
solutions to the corresponding Cauchy problem.  Moreover, we construct
explicit compactly supported self-similar solutions which generalize
Barenblatt profiles --- the well-known solutions of the classical
porous medium equation.
\end{abstract}

\keywords{porous medium equation, nonlocal equation, 
hypercontractivity, self-similar solutions}

\subjclass[2000]{35K55, 35B45, 35C06}

\date{\today}

\thanks{The authors wish to thank Jean Dolbeault and R\'egis Monneau for
  the fruitful discussions they had together.  The authors were supported
  by an EGIDE project (PHC POLONIUM 20078TL, 0185, 2009--2010).  The
  second author was supported by an ANR project (EVOL).  The first and
  the third authors were supported by the \hbox{MNSzW} grant N201
  418839, and the Foundation for Polish Science operated within the
  Innovative Economy Operational Programme 2007--2013 funded by
  European Regional Development Fund (Ph.D. Programme: Mathematical
  Methods in Natural Sciences). }

\maketitle

\baselineskip=20pt

\section{Introduction}

In this work, we study the following degenerate nonlinear nonlocal evolution equation
\begin{equation}\label{eq:main}
\partial_t u = \nabla \cdot \left( |u| \nabla^{\alpha-1} (|u|^{m-2}u)
\right),
\qquad x\in \R^d,\ t>0,
\end{equation}
where $m >1$ and $\nabla^{\alpha-1}$ denotes the integro-differential
operator $\nabla (-\Delta)^{\frac{\alpha}{2}-1}$, $\alpha\in (0,2)$.
The equation is supplemented with an initial condition
\begin{equation}\label{eq:ic}
u(0,x) = u_0 (x).
\end{equation}
First, we construct nonnegative self-similar solutions of equation
\eqref{eq:main} which are \emph{explicit} and \emph{compactly
  supported}. They generalize the classical
Barenblatt--Kompaneets--Pattle--Zel'\-do\-vich solutions of the porous
medium equation, see \eqref{PME} below.  Second, we prove the
existence of sign changing weak solutions to problem
\eqref{eq:main}--\eqref{eq:ic} for merely integrable initial data, and
we prove that these solutions satisfy sharp hypercontractivity
$L^1\mapsto L^p$ estimates.

\subsection*{A nonlocal operator.}
Equation~\eqref{eq:main} involves a nonlocal operator denoted by
$\nabla^{\alpha-1}$ which can be defined as the Fourier multiplier
whose symbol is ${i \xi}|\xi|^{\alpha-2}$. This notation emphasizes
that it is a (pseudo-differential) operator of order $\alpha -1$.
Recalling the definition of the fractional Laplace operator
$(-\Delta)^{\frac\alpha2}(v)={\mathcal F}^{-1}(|\xi|^\alpha{\mathcal
  F}v)$ and the Riesz potential ${\mathcal
  I}_\beta=(-\Delta)^{-\frac\beta2}$, {\it i.e.}  Fourier multipliers
whose symbols are $|\xi|^\alpha$ and $|\xi|^{-\beta}$ respectively (see
for instance \cite[Ch.~V]{Stein}), the fractional gradient
$\nabla^{\alpha-1}$ can also be written as $\nabla {\mathcal
  I}_{2-\alpha}$. Finally, let us emphasize that the definition of
$\nabla^{\alpha-1}$ is consistent with the usual gradient:
$\nabla^1=\nabla$; the components of $\nabla^0$ are the Riesz
transforms; moreover we have $\nabla\cdot\nabla^{\alpha-1}
=\nabla^{\frac\alpha2}\cdot\nabla^{\frac\alpha2}
=-(-\Delta)^{\frac\alpha2}$.  It is also possible, following the
reasoning from \cite[Th. 1]{DI06} to define the fractional gradient
via the singular integral formula for smooth and bounded functions
$v:\R^d\to \R$
\begin{equation}\label{frac:int}
\nabla^{\alpha-1} v (x) = C_{d,\alpha} \int \big(v(x)-v(x+z)\big) \frac{z}{|z|^{d+\alpha}} \,{\rm d}z 
\end{equation}
with a suitable constant $C_{d,\alpha}>0$.

\subsection*{Related equations and results.} Our preliminary results for the problem 
\eqref{eq:main}--\eqref{eq:ic} have been announced in \cite{BIK}.

First, we would like to shed light on the link between \eqref{eq:main}
and other partial differential equations.  Notice that when $\alpha=2$
equation \eqref{eq:main} coincides with the classical (nonlinear
parabolic) porous medium equation
\begin{equation}\label{PME}
\partial_t u = \nabla \cdot \big( |u|\nabla (|u|^{m-2}u) \big)\ \ =\nabla\cdot\big((m-1)|u|^{m-1}\nabla u\big).
\end{equation}
For the theory of porous media equations, the interested reader is
referred to \cite{V2,V1} and references therein. Of course, for $m=2$,
the Boussinesq equation is recovered.

The following nonlinear and nonlocal equation 
\begin{equation}\label{disloc}
\partial_tv+|v_x|\left(-\frac{\partial^2}{\partial x^2}\right)^\frac{\alpha}{2}v=0
\end{equation}
in the one-dimensional case $x\in\R$ was studied by the first, the
third authors and R. Monneau \cite{bkm}.  Such an equation was derived
as a model for the dynamics of dislocations in crystals. In
\cite{bkm}, the existence, uniqueness and comparison properties of
(viscosity) solutions have been proved, and explicit self-similar
solutions have been constructed.  Notice that the function $u=v_x$,
where $v$ is a solution to \eqref{disloc}, solves the one-dimensional
case of \eqref{eq:main} with $m=2$. Thus, equation \eqref{eq:main} is
a multidimensional generalization of the one in \eqref{disloc}.

Recently, Caffarelli and V\'azquez \cite{cv10a,cv10b} studied
nonnegative weak solutions of \eqref{eq:main} in the case $m=2$ in the
multidimensional case. Precisely, they studied the following
(nonlocal) porous medium equation in $\R^d$
\begin{equation}\label{eq:cv}
\partial_t u = \nabla \cdot (u \nabla p), 
\end{equation}
with the nonlocal pressure law $p = (-\Delta)^{-s} u$, $0<s<1$,
obtained from the density $u\ge 0$. Notice that, for $\alpha = 2 -2s
\in (0,2)$, equation \eqref{eq:cv} reads $\partial_t u = \nabla \cdot
( u \nabla^{\alpha-1} u)$.  For sign changing $u$'s, our
equation~\eqref{eq:main} is a (formally parabolic) extension of
equation~\eqref{eq:cv} of the structure of \eqref{disloc}.  In
\cite{cv10a}, Caffarelli and V\'azquez constructed nonnegative weak
solutions for \eqref{eq:cv}, {\it i.e.} for \eqref{eq:main} with
$m=2$, with initial data satisfying: $u_0 \in L^1(\R^d)\cap
L^\infty(\R^d)$ and such that $ 0 \le u_0 (x) \le A {\rm e}^{-a|x|}$
for some $ A,\, a >0$.  Besides the positivity and the mass
preservation, the properties of solutions, listed in the next paper
\cite[p. 4]{cv10b}, include the finite speed of propagation proved
using the comparison with suitable supersolutions.  Further regularity
properties of solutions of \eqref{eq:main} with $m=2$ and
$\alpha\in(0,1)$ are studied in \cite{CSV}.

Another nonlocal porous medium equation has been proposed in
\cite{dPQRV-AdvM,dPQRV-CPAM,JLV-2012} 
\[\partial_t u+(-\Delta)^{\frac{\alpha}{2}}(|u|^{m-1}u)=0\]
for $\alpha=1$ and $\alpha\in(0,2)$, respectively.  Among several other
properties like smoothing effects and decay estimates, solutions of
this generalization of the porous medium equation enjoy the
$L^1$-contraction property, so, they are unique. But self-similar
solutions are not compactly supported \cite[Th.~1.1]{JLV-2012}.

Finally, we recall that the following nonlocal higher order equation,
appearing in the modeling of propagation of fractures in rocks,
\[ \partial_tu=\nabla\cdot(u^n\nabla(-\Delta)^{\frac12}u)\]
(with $u\ge 0$ and $n >1$), has been studied in \cite{imbertmellet} in a
one-dimensional bounded domain. At least formally, this equation with
$n=1$ corresponds to \eqref{eq:main} with $\alpha=3$ and $m=2$.

\subsection*{Notation.}  

In this work, $Q_T$ denotes $(0,T) \times \R^d$. The usual norm of the
Lebesgue space $L^p (\R^d)$ is denoted by $\|\,\cdot\,\|_p$ for any $p
\in [1,\infty]$, and $H^{s,p}(\R^d)$ with the norm
$\|\cdot\|_{H^{s,p}}$ is the fractional order Sobolev space, see
Section~\ref{sec:prelim}.  The Fourier transform $\mathcal F$ and its 
inverse transform $\mathcal{F}^{-1}$ of a function $v\in
L^1(\R^d)$ are defined by
\[ \mathcal{F} v (\xi) = (2\pi)^{-\frac{d}2} \int v(x)\, {\rm e}^{-i x \cdot \xi} \,{\rm d}x, 
\ \ \ \mathcal{F}^{-1} v (x) = (2\pi)^{-\frac{d}2} \int v(x)\, {\rm
  e}^{i x \cdot \xi} \,{\rm d}\xi. \] 
Here, all integrals with no
integration limits are over the whole space $\R^d$ if one integrates
with respect to $x$ and over the whole half-line $\R^+=[0,\infty)$ if
  the integration is with respect to $t$.  As usual,
  $w_+=\max\{0,w\}$, $w_-=\max\{0,-w\}$, so $w=w_+ - w_-$.  Constants
  (always independent of $x$ and $t $) will be denoted by the same
  letter $C$, even if they may vary from line to line.  Sometimes we
  write, {\it e.g.}, $C=C(p,q,r)$ when we want to emphasize the
  dependence of $C$ on particular parameters~$p,\,q,\,r$, for
  instance.

\section{Main results}

In this work, we show two main results: we construct explicit
self-similar solutions of equation \eqref{eq:main}, as well as we
prove that the initial value problem \eqref{eq:main}--\eqref{eq:ic}
has a global-in-time weak solution which satisfies certain optimal decay
estimates.

We first ``recall'' the appropriate notion of weak solutions for
Equation~\eqref{eq:main}, see for instance \cite{V2,V1}.
\begin{definition}[Weak solutions] 
\label{defi:sol} 
A function $u: Q_T \to \R$ is a~\emph{weak solution} of
the problem \eqref{eq:main}--\eqref{eq:ic} in $Q_T$ if $u \in L^1
(Q_T)$, $\nabla^{\alpha-1} (|u|^{m-2}u) \in L_{\rm {loc}}^1(Q_T)$ and 
$|u|\nabla^{\alpha-1} (|u|^{m-2}u) \in L_{\rm {loc}}^1(Q_T)$, 
and
\[\iint  \left(u\partial_t\vphi - |u|\nabla^{\alpha-1}(|u|^{m-2}u)
\cdot \nabla \vphi\right) \dt \dx + \int u_0 (x) \vphi (0,x) \, {\rm
  d}x =0\] for all test functions $\vphi\in
\mathcal{C}^\infty(Q_T)\cap \mathcal{C}(\overline{Q_T})$ such that
$\vphi$ has a compact support in the space variable $x$ and vanishes
near $t=T$.
\end{definition}
The first main result of this work says that there is a family of
nonnegative {\em explicit} compactly supported self-similar solutions
of \eqref{eq:main}, {\it i.e.} nonnegative solutions that are
invariant under a suitable scaling.  Observe that if $u(t, x)$ is a
solution of \eqref{eq:main}, then so is $L^{d\lambda}u(L t,L^\lambda
x)$ for each $L>0$, where $\lambda=({d(m-1)+\alpha})^{-1}$.  Thus, the
scale invariant solutions should be of the following form
\begin{equation}\label{eq:self-sim-shape}
u(t,x) = \frac{1}{t^{d\lambda}}\Phi  \left( \frac{x}{t^{\lambda}} \right)
\qquad \text{with} \quad \lambda = \frac1{d(m-1) + \alpha},
\end{equation}
for some function $\Phi:\R^d\to \R$ satisfying the following nonlocal
``elliptic type'' equation
\begin{equation}\label{eq:ellip-sim}
  -\lambda \nabla \cdot (y \Phi) = \nabla \cdot (|\Phi| \nabla^{\alpha-1}( 
  |\Phi|^{m-2}\Phi ) )
 \qquad \text{where}  \quad y=\frac{x}{t^{\lambda}}.
\end{equation}
\begin{theorem}[Self-similar solutions]\label{thm:self-sim} 
Let $\alpha\in(0,2]$, $m>1$. 
Consider the function $\Phi_{\alpha,m} : \R^d \to \R$ defined as 
\begin{equation}\label{Phi}
\Phi_{\alpha,m} (y) = \left(k_{\alpha,d} (1-|y|^2)_+^{\frac{\alpha}{2}}\right)^\frac{1}{m-1}
 \end{equation}
 with the constant 
\[k_{\alpha,d} = \frac{d \,\Gamma\left(\frac{d}2\right)}{(d(m-1)+\alpha)2^\alpha
   \Gamma \left(1+\frac{\alpha}2\right)\Gamma
   \left(\frac{d+\alpha}2\right)}. \] Then, the function $u :
(0,\infty) \times \R^d \to \R^+$ defined by \eqref{eq:self-sim-shape}
with $\Phi=\Phi_{\alpha,m}$ is a weak solution of \eqref{eq:main} in
the sense of Definition~\ref{defi:sol} in $Q_{\eta,T}\equiv
(\eta,T)\times\R^d$ for every $0<\eta<T<\infty$. Moreover, $u(t,x)$
satisfies the equation in the pointwise sense for $|x|\neq
t^{\frac{1}{d(m-1)+\alpha}}$, and is
$\min\big\{\frac{\alpha}{2(m-1)},1\big\}$-H\"older continuous at the
interface $|x|=t^{\frac{1}{d(m-1)+\alpha}}$.
\end{theorem}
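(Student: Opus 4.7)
The plan is to verify that the profile $\Phi=\Phi_{\alpha,m}$ solves the nonlocal elliptic equation \eqref{eq:ellip-sim} (in the appropriate weak sense), since plugging the ansatz \eqref{eq:self-sim-shape} into Definition~\ref{defi:sol} and using the scaling reduces the question of $u$ being a weak solution of \eqref{eq:main} to $\Phi$ satisfying \eqref{eq:ellip-sim}. Writing $U:=|\Phi|^{m-2}\Phi=\Phi^{m-1}=k_{\alpha,d}(1-|y|^2)_+^{\alpha/2}$, the target identity becomes
\[
\nabla \cdot\bigl(\Phi\, \nabla^{\alpha-1} U\bigr) = -\lambda\, \nabla\cdot(y\,\Phi).
\]
The miracle that makes this work is the same one that drives the classical Barenblatt profile: I expect to prove that $\nabla^{\alpha-1} U(y) = -\lambda\, y$ for $|y|<1$, at which point both sides become identical on the support of $\Phi$, and vanish identically for $|y|>1$ since $\Phi\equiv 0$ there.

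The key analytic ingredient is a classical Riesz/Getoor-type identity: for $|y|<1$,
\[
(-\Delta)^{\alpha/2}\bigl[(1-|y|^2)_+^{\alpha/2}\bigr] = 2^\alpha\,\frac{\Gamma(1+\alpha/2)\,\Gamma((d+\alpha)/2)}{\Gamma(d/2)}\, ,
\]
that is, a \emph{constant} on the unit ball. I would recall this formula (it can be proved via the explicit action of the Riesz potential on radial functions, cf.\ Stein or the Landkof/Getoor literature) and combine it with the observation that $\nabla^{\alpha-1}U = \nabla V$ with $V:=\mathcal I_{2-\alpha}U$. Because $U$ is radial, so is $V$, and $\Delta V = -(-\Delta)^{\alpha/2}U$ is the constant $-k_{\alpha,d}\,c$ on $|y|<1$. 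Writing $V(y)=\psi(|y|)$ and solving $\psi''+\tfrac{d-1}{r}\psi'=-k_{\alpha,d}c$ with $\psi'(0)=0$ yields $\psi'(r)=-\tfrac{k_{\alpha,d}c}{d}\,r$, hence $\nabla V(y) = -\tfrac{k_{\alpha,d}c}{d}\,y$ on the ball. The explicit normalization of $k_{\alpha,d}$ in the statement is precisely the one that makes $\tfrac{k_{\alpha,d}c}{d}=\lambda$, and this is the arithmetic that pins down the constant in the theorem.

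It remains to upgrade the pointwise identity (which now holds on $|y|<1$ and trivially on $|y|>1$) to the weak formulation across the interface. For this I would note that $\nabla^{\alpha-1}U$ is smooth and bounded on all of $\R^d$ (since $U$ is compactly supported and bounded), while $\Phi$ is continuous and vanishes on $|y|=1$ from the inside. Thus $\Phi\,\nabla^{\alpha-1}U$ is continuous across the interface with no jump, so no surface measure appears in its distributional divergence; the weak equation in $Q_{\eta,T}$ therefore follows from the pointwise identity on the open set $\{|y|\neq 1\}$ by integration by parts against test functions. Hölder regularity at the free boundary is a direct computation: near $|y|=1$, $\Phi(y)\sim C\,(1-|y|)^{\alpha/(2(m-1))}$, which is Lipschitz if $\alpha\ge 2(m-1)$ and otherwise $C^{0,\alpha/(2(m-1))}$, giving the stated exponent $\min\{\alpha/(2(m-1)),1\}$.

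The main obstacle is the Riesz/Getoor identity itself: it is the true nonlocal analogue of ``the pressure $U$ is a quadratic polynomial on its support'' that makes the Barenblatt ansatz collapse onto an algebraic relation, and everything else (the explicit value of $k_{\alpha,d}$, the $-\lambda y$ structure of $\nabla^{\alpha-1}U$, the vanishing of the extra terms at the interface) is essentially a consequence. I do not expect the passage to the weak formulation to be delicate beyond checking the continuity of the flux $\Phi\,\nabla^{\alpha-1}U$ across $|y|=1$, and the Hölder claim is a one-line estimate on the explicit profile.
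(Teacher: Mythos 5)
Your proof is correct and reaches the right conclusion, but the key computational step is organized differently from the paper's. The paper applies its Lemma~\ref{lem:getoor} directly with $\gamma=\alpha$, $\beta=2-\alpha$ to obtain a completely explicit formula for $\mathcal I_{2-\alpha}\Phi_\alpha$ (a quadratic polynomial on $B_1$, a hypergeometric function outside), and then simply differentiates. You instead invoke only Getoor's classical identity (the special case $\beta=\alpha$, i.e.\ the paper's Corollary~\ref{lem:getoor-genuine}), namely that $(-\Delta)^{\alpha/2}(1-|y|^2)_+^{\alpha/2}$ is constant on $B_1$, and then recover $\nabla V=\nabla\mathcal I_{2-\alpha}\Phi_\alpha$ by solving the radial Poisson equation $\psi''+\tfrac{d-1}{r}\psi'=-K$ on $B_1$ with $\psi'(0)=0$. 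This buys you a lighter computational load: you never need the hypergeometric formula for $\mathcal I_{2-\alpha}\Phi_\alpha$ outside $B_1$ nor the Weber--Schafheitlin evaluation with the parameters $\gamma=\alpha$, $\beta=2-\alpha$; elliptic regularity (applied to $\Delta V=\text{const}$ in $B_1$) justifies the ODE for $\psi$ and the boundary condition $\psi'(0)=0$. The arithmetic closes as you say: with $c=2^\alpha\Gamma(1+\alpha/2)\Gamma((d+\alpha)/2)/\Gamma(d/2)$ one checks directly that $k_{\alpha,d}c/d=\lambda$.

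A few small points to tighten. First, $\nabla^{\alpha-1}\Phi_\alpha^{m-1}$ is \emph{not} smooth across $|y|=1$ (only bounded and H\"older continuous, which is a consequence of $\mathcal I_{2-\alpha}\Phi_\alpha\in C^{2-\alpha/2}$), but boundedness is all you need; in fact your argument about the absence of a surface term is more than necessary, since once $\nabla^{\alpha-1}U=-\lambda y$ is established on $B_1$ the two vector fields $\Phi\nabla^{\alpha-1}U$ and $-\lambda y\Phi$ coincide pointwise on \emph{all} of $\R^d$ (both vanish off $B_1$), so their distributional divergences agree tautologically. Second, the identity $\nabla^{\alpha-1}U=\nabla\mathcal I_{2-\alpha}U$ and the very definition of $\mathcal I_{2-\alpha}$ as a convolution require $2-\alpha<d$; for the borderline case $d=1$, $\alpha\le 1$ the paper falls back on analytic continuation in $\alpha$, a step you should at least acknowledge. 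Third, you should spell out that the membership conditions of Definition~\ref{defi:sol} (namely $\nabla^{\alpha-1}(|u|^{m-2}u)\in L^1_{\rm loc}$ and $|u|\nabla^{\alpha-1}(|u|^{m-2}u)\in L^1_{\rm loc}$) follow from the boundedness of $\nabla^{\alpha-1}\Phi_\alpha^{m-1}$ together with $\Phi\in L^1\cap L^\infty$. None of these affect the validity of the argument, and your H\"older computation at the interface is exactly the right one.
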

\begin{remark}
  When $\alpha=2$ in expression \eqref{self} below, we recover the classical
  Barenblatt--Kom\-pa\-neets--Pattle--Zel'dovich solutions of the porous medium
  equation \eqref{PME}, see for instance \cite{V1,V2}.
\end{remark}
\begin{remark} 
 For each $M\in(0,\infty)$ we can find a nonnegative self-similar
 solution $u$ with prescribed mass $M\equiv \int u(t,x)\, {\rm d} x$
 (which is conserved in time) by a suitable scaling of the profile
 $\Phi_{\alpha,m}$. Indeed, this self-similar solution is given by the
 formula
\begin{equation} 
  u(t,x)=t^{-\frac{d}{d(m-1)+\alpha}} \left(k_{\alpha,d} 
    \left(R^2-\left|x t^{-\frac{1}{d(m-1)+\alpha}}\right|^2\right)_ +^{\frac\alpha2}\right)^{\frac{1}{m-1}},
  \label{self}
\end{equation}  
where, for each $M>0$, there exists a {unique} $R>0$ such that $\int u(t,x)\dx=M$. 
\end{remark} 
\begin{remark}
Self-similar solutions of equation \eqref{eq:cv} (which is a
particular case of equation \eqref{eq:main}) have been proved to exist
in \cite{cv10b} by studying the following obstacle problem for the
fractional Laplacian. For $\alpha\in(0,2)$ and
$\Psi(y)=C-a|y|^2$ where $a=a(d,\alpha)$ and $C>0$, one looks for a function
$P=P(y)$ with the following properties:
\[ P\ge \Psi,\qquad (-\Delta)^{\frac{\alpha}{2}}P\ge 0,\qquad 
 \text{and } \quad \text{ either} \quad P=\Psi\quad \text{or} \quad
 (-\Delta)^{\frac{\alpha}{2}}P=0.\] The novelty of our approach is
 that we exhibit the {\it explicit} self-similar profile
 $\Phi_{\alpha,2}$ defined in \eqref{Phi} and, consequently, the {\it
   explicit} solution of this obstacle problem: $P(y)={\mathcal
   I}_\alpha\left(\Phi_{\alpha,2}\right)\left(\frac{y}{R}\right)$,
 where ${\mathcal I}_\alpha=(-\Delta)^{-\frac{\alpha}{2}}$ is the
 Riesz potential and $R>0$ is a suitable constant.
\end{remark}

Next, we prove the existence of weak solutions to the initial value problem
\eqref{eq:main}--\eqref{eq:ic}.
\begin{theorem}[Existence and decay of $L^p$-norms] \label{thm:hyper} 
Let $\alpha\in(0,2)$ and 
\begin{equation}\label{cond:m} 
\begin{cases}  m>1 +\frac{1-\alpha}d & \text{ if } \alpha \in (0,1], \\ 
m> 3 - \frac{2}\alpha & \text{ if } \alpha \in
(1,2). \end{cases} 
\end{equation}
Given $u_0\in L^1(\R^d)$, there exists a
global-in-time weak solution $u$ of the Cauchy problem
\eqref{eq:main}--\eqref{eq:ic}.  Moreover,
\[\int u(t,x) \dx=\int u_0(x) \dx\]
and 
\begin{equation}\label{u:decay}
\|u(t)\|_p\le C(d,\alpha,m) \|u_0\|_1^{\frac{d(m-1)/{p}
    +{\alpha}}{d(m-1)+\alpha}}
t^{-\frac{d}{d(m-1)+\alpha}\big(1-\frac{1}{p}\big)} \qquad \text{for
  all}\quad t>0,
\end{equation}
holds with the constant $C(d,\alpha,m)$ independent of $p$ and $u_0$.

The solution $u$ is nonnegative if the initial condition $u_0$ is so. 
If $u_0 \in L^p(\R^d)$ for some  $p \in [1,\infty]$, then
\[\|u(t)\|_p\leq \|u_0\|_p\] 
holds for all $t>0$. 
\end{theorem}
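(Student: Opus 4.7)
\bigskip

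\noindent\textbf{Proof sketch for Theorem \ref{thm:hyper}.}
The plan is a two-level approximation followed by a priori estimates and compactness. \emph{First}, assume initial data $u_0 \in L^1\cap L^\infty(\R^d)$ (later recovered by a density argument). \emph{Second}, regularize the equation by vanishing viscosity: consider
\[
\partial_t u_\eps = \eps \Delta u_\eps + \nabla\cdot\bigl(|u_\eps|\, \nabla^{\alpha-1}(|u_\eps|^{m-2}u_\eps)\bigr),
\qquad u_\eps(0,\cdot)=u_0\ast\rho_\eps,
\]
where $\rho_\eps$ is a standard mollifier; if needed, also truncate the nonlinearity $s\mapsto |s|^{m-2}s$ and use a fixed point in $C([0,T];L^2)$ (the parabolic term provides instant smoothing that makes the truncation harmless). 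This yields a smooth classical solution for each $\eps>0$, on which differentiation and integration by parts are licit.

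The \emph{heart of the argument} is the $L^p$ energy identity: multiplying the regularized equation by $p|u_\eps|^{p-2}u_\eps$ and integrating, one rewrites the nonlocal term as
\[
\frac{d}{dt}\|u_\eps(t)\|_p^p + p\eps(p-1)\!\int |u_\eps|^{p-2}|\nabla u_\eps|^2\dx
= -(p-1)\!\int |u_\eps|^{p-2}u_\eps \,(-\Delta)^{\alpha/2}(|u_\eps|^{m-2}u_\eps)\dx,
\]
using $\nabla\cdot\nabla^{\alpha-1}=-(-\Delta)^{\alpha/2}$ and the chain rule applied to the smooth $u_\eps$. A Stroock--Varopoulos (Córdoba--Córdoba) type inequality bounds the right-hand side from above by $-c_{p,m}\bigl\|(-\Delta)^{\alpha/4}(|u_\eps|^{(p+m-2)/2}u_\eps/|u_\eps|)\bigr\|_2^2$, which is nonpositive; this gives at once the monotonicity $\|u_\eps(t)\|_p\le \|u_0\ast\rho_\eps\|_p$ for every $p\in[1,\infty]$, including $p=1$ (mass conservation after taking $p\to 1$) and $p=\infty$. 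Setting $p=1$ also yields nonnegativity preservation, since multiplying by $-\mathbf 1_{\{u_\eps<0\}}$ (smoothed out) shows $\|u_{\eps,-}(t)\|_1\le \|(u_0\ast\rho_\eps)_-\|_1$.

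To upgrade monotonicity to the \emph{hypercontractive bound} \eqref{u:decay}, combine the fractional Gagliardo--Nirenberg--Sobolev inequality
\[
\|v\|_{L^{2d/(d-\alpha)}}^2 \le C\,\|(-\Delta)^{\alpha/4}v\|_2^2
\]
applied to $v=|u_\eps|^{(p+m-2)/2}\mathrm{sgn}(u_\eps)$ with mass conservation and interpolation between $L^1$ and $L^p$ to derive a Nash-type differential inequality of the form
\[
\frac{d}{dt}\|u_\eps\|_p^p \le -C\,\|u_0\|_1^{-\sigma}\,\|u_\eps\|_p^{p(1+\theta)}
\qquad (t>0),
\]
with exponents $\sigma,\theta>0$ determined by $d,\alpha,m,p$; this is exactly the place where the restriction \eqref{cond:m} enters, since it is needed to keep $\theta>0$ (i.e.\ to have an actually dissipative inequality). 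Integration of the ODE gives a bound of the form $\|u_\eps(t)\|_p \le C\|u_0\|_1^{a(p)}t^{-b(p)}$ where a short computation identifies $a(p)$ and $b(p)$ precisely as in \eqref{u:decay}; the constant $C$ remains bounded as $p\to\infty$, giving the $L^\infty$ version by standard Moser/iteration or by direct passage.

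\emph{Passing to the limit} $\eps\to 0$ is the step most likely to require care. The uniform $L^\infty(0,T;L^p)$ bounds together with the viscous energy give equicontinuity in space for $|u_\eps|^{(m-1+p)/2}$ in a fractional Sobolev space $H^{\alpha/2,2}$, while the equation itself provides a uniform bound on $\partial_t u_\eps$ in a negative-order space. An Aubin--Lions type compactness argument then yields a.e.\ (and $L^p_{\mathrm{loc}}$) convergence along a subsequence; this suffices to pass to the limit in the weak formulation, since $|u_\eps|^{m-2}u_\eps \to |u|^{m-2}u$ a.e.\ and the flux $|u_\eps|\nabla^{\alpha-1}(|u_\eps|^{m-2}u_\eps)$ is equi-integrable by the $L^p$ bounds (here again \eqref{cond:m} is used to ensure the flux lies in $L^1_{\mathrm{loc}}$). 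The limit $u$ satisfies Definition~\ref{defi:sol}; the estimates \eqref{u:decay}, mass conservation, nonnegativity, and $L^p$ monotonicity pass to the limit by lower semicontinuity. Finally, for general $u_0\in L^1$, approximation by $L^1\cap L^\infty$ data and another compactness argument based on the same estimates delivers a weak solution with the stated properties. The hardest technical point is the Stroock--Varopoulos step for sign-changing $u$ combined with the exponent bookkeeping that produces the sharp condition \eqref{cond:m}.
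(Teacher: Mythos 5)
Your proposal takes essentially the same route as the paper: parabolic regularization, $L^p$-energy estimates, Stroock--Varopoulos to dominate the nonlocal dissipation term by a fractional gradient of $|u|^{(p+m-1)/2}$, Nash/Gagliardo--Nirenberg interpolation with the conserved $L^1$-norm to close a differential inequality in $\|u\|_p$, Moser--Alikakos iteration for the $L^\infty$-bound, compactness to pass to the limit, and a density argument for general $L^1$ data.

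Two points in your sketch are misplaced, though they do not undermine the architecture. First, the claim that condition \eqref{cond:m} is ``exactly the place'' where the dissipativity exponent $\theta>0$ of the Nash-type ODE is secured is incorrect: with $r=p+m-1$ one computes $\theta = \tfrac{a}{p}-1 = \tfrac{d(m-1)+\alpha}{d(p-1)}$, which is positive for every $m>1$, $\alpha>0$, $p>1$; the hypercontractive bound \eqref{u:decay} with a $p$-dependent constant requires nothing beyond $m>1$. The restriction \eqref{cond:m} is needed instead in the \emph{compactness} step --- to guarantee that $\nabla^{\alpha-1}(|u|^{m-1}\sgn u)$ lands in some $L^q(Q_T)$ uniformly in the approximation parameters, via the H\"older continuity of the nonlinearity with exponent $\gamma=\min\{m-1,1\}$ combined with Riesz-potential and Sobolev-embedding estimates; your parenthetical about the flux being equi-integrable/$L^1_{\rm loc}$ is the right place, and the $\theta>0$ sentence should be removed. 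Second, integrating the ODE directly does \emph{not} produce a constant bounded as $p\to\infty$: the resulting $C_p$ blows up, so the Moser--Alikakos recursion is not an interchangeable alternative but the essential mechanism for the $p$-uniform, hence $L^\infty$, estimate. Finally, for $m\in(1,3)\setminus\{2\}$ the nonlinearity $|u|^{m-2}u$ is not $C^1$, so a separate smooth approximation $G_\eps$ (and an integral, rather than differential, form of the $L^p$-inequality) is mandatory rather than an optional truncation; without it the chain rule and fixed-point step in the viscous stage are not licit.
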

\begin{remark}
  Estimates~\eqref{u:decay} are sharp since the decay in
  Theorem~\ref{thm:hyper} corresponds exactly to that for self-similar
  solutions constructed in Theorem~\ref{thm:self-sim}.  Moreover, for
  $\alpha=2$, they are similar to those for degenerate partial
  differential equations like the porous medium equation (showing the
  regularization effect on the $L^p$-norms of solutions); see, {\it
    e.g.}, \cite{V1}, \cite[Ch. 2]{CJMTU}.
\end{remark}
\begin{remark}
After proving those hypercontractivity estimates in \cite{BIK}, we
learned that a~similar result is obtained in \cite{CSV}, however, for a less general model: $\alpha\in (0,1)$, $m=2$, and nonnegative $u_0$. 
 Moreover,
analogous decay estimates for another fractional porous medium
equation of the form
$\partial_tu+(-\Delta)^{\frac{\alpha}{2}}(|u|^{m-1}u)=0$ were proved
recently in \cite{dPQRV-CPAM}.
\end{remark}
Compared with the methods used in \cite{cv10a}, we propose an
alternative strategy of the proof of the existence of solutions.  In
this paper, we consider approximating solutions $u=u^{\delta,\e}$ of
the equation
\[\partial_tu=\delta\Delta u+\nabla\cdot(|u|\nabla^{\alpha-1}G_\e(u)),\]
considered in the whole space $\R^d$, where $G_\e(u)$ is a sufficiently smooth
approximation of $u|u|^{m-2}$, and then we pass to the limit with the
parameters $\e\searrow 0$, and $\delta\searrow 0$.
Solutions of the approximating equation exist because the parabolic
regularization term $\delta\Delta u$ is strong enough to regularize
equation \eqref{eq:main} when $0<\alpha< 2$, but of course not for $\alpha=2$.
Our approach resembles the approach to the one-dimensional model
achieved in \cite[Sec. 4 and 5]{bkm} via viscosity solutions.

\section{Preliminaries}
\label{sec:prelim}

In this section, we collect known results that we will used in proofs
of the main theorems.

\subsection*{Bessel and  hypergeometric functions}

{\it Bessel functions} of order $\nu$ are denoted by $J_\nu(z)$, and they
behave for small and large values of the (complex) variable~$z$ like
\begin{eqnarray*}
J_\nu (z) \sim & \frac1{\Gamma(\nu+1)} (\frac{z}2)^\nu & \text{
  as\ \ } z \to 0, \\ J_\nu (z) \sim &
(\frac2\pi)^{\frac12}\cos(z-\frac{\nu \pi}2 -
\frac{\pi}4){z^{-\frac12}} & \text{ as\ \ } |z| \to \infty,
\end{eqnarray*}
where for functions $f$, $g$,\ \ the relation $f \sim g$ means that $\frac{f}{g}
\to 1$.  For the proofs of those properties of $J_\nu$, the reader is
referred to, {\em e.g.}, \cite{wat}.

{\it The hypergeometric function}, denoted by ${}_2 F_1 (a,b;c;z)$, is
defined for complex numbers $a,b,c$ and $z$ as the sum of the series
\[{}_2 F_1 (a,b;c;z)=\sum_{n=0}^\infty\frac{(a)_n(b)_n}{(c)_nn!}z^n\ \ \ {\rm
  for\ \ }|z|<1,\] where $(a)_n\equiv \frac{\Gamma(a+n)}{\Gamma(a)}$,
and $\Gamma$ denotes the Euler Gamma function.  This series is
absolutely convergent in the open unit disc and also on the circle
$|z|=1$ if $\Re (a+b-c) <0$.

It is known \cite[p. 39]{mos} that when $b=-n$ is a negative integer, ${}_2
F_1(a,-n;c;z)$ is a~polynomial function of degree $n$.  In particular, we
have
\begin{equation}\label{eq:hyper-poly}
{}_2F_1(a,-1;c;z)= 1 - \frac{a}c z.  
\end{equation}
We will also use the following differentiation formula \cite[p. 41]{mos}
\begin{equation}\label{eq:hyper-diff}
\frac{d}{dz}\big({}_2 F_1 (a,b;c;z)\big) = \frac{ab}{c} \; {}_2 F_1 (a+1,b+1;c+1;z).
\end{equation}

{ \it The Weber--Schafheitlin integral.}  
If $0 < b < a$ and if integral
\eqref{eq:ws} below is convergent, then the following identity holds true
\begin{equation}\label{eq:ws}
  \int_0^{\infty} t^{-\lambda} J_\mu (at) J_\nu (bt) \, {\rm d}t 
  = \frac{b^{\nu} 2^{-\lambda} a^{\lambda-\nu-1} \Gamma
    (\frac{\nu+\mu-\lambda+1}{2})}{\Gamma (\frac{-\nu+\mu+\lambda+1}{2})
    \Gamma (1 + \nu)} {}_2 F_1
 \left(\frac{\nu+\mu-\lambda+1}{2},\frac{\nu-\mu-\lambda+1}{2}; \nu+1;\frac{b^2}{a^2}\right).
\end{equation}
According to Watson, \cite[pp. 401--403]{wat}, this result was obtained by
Sonine and Schafheitlin. However, it is usually referred to as the {\em
  Weber--Schafheitlin discontinuous integral}  
  since there occurs a discontinuity for $a=b$.

\subsection*{The Stroock--Varopoulos inequality}

We next recall the {\em the Stroock--Varopoulos inequality}, see
\cite[Theorem~2.1 and Condition~(1.7)]{LS}
for a proof.  
\begin{proposition}\label{prop:SV}
For  $\alpha\in(0,2]$, $w\in \mathcal{C}^\infty_c(\R^d)$ and $q>1$,
the following inequality holds true 
\begin{equation}\label{SV} 
\int \sgn w \,|w|^{q-1} \, (-\Delta)^{\frac\alpha2}w  \dx \ge
\frac{4(q-1)}{q^2}\int\left|\nabla^{\frac\alpha2}\left(\sgn w\,
|w|^{\frac{q}2}\right)\right|^2 \, \dx \ge
\frac{4(q-1)}{q^2}\int\left|\nabla^{\frac\alpha2} |w|^{\frac{q}2}\right|^2 \, \dx.
\end{equation} 
\end{proposition}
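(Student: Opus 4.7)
The plan is to reduce the whole statement to a pointwise, one-variable inequality by using the singular-integral representation of $(-\Delta)^{\alpha/2}$. For $w\in\mathcal{C}^\infty_c(\R^d)$ one has
\[
(-\Delta)^{\alpha/2}w(x)=c_{d,\alpha}\,\mathrm{P.V.}\!\int\frac{w(x)-w(y)}{|x-y|^{d+\alpha}}\dy,
\]
and after the standard $x\leftrightarrow y$ symmetrization one obtains, for any reasonable $F$,
\[
\int F(w(x))(-\Delta)^{\alpha/2}w(x)\dx=\frac{c_{d,\alpha}}{2}\iint\frac{\bigl(F(w(x))-F(w(y))\bigr)\bigl(w(x)-w(y)\bigr)}{|x-y|^{d+\alpha}}\dx\dy.
\]
In parallel, the identity $\na^{\alpha/2}\!\cdot\na^{\alpha/2}=-(-\Delta)^{\alpha/2}$ noted in the introduction, together with integration by parts, gives $\int|\na^{\alpha/2}v|^2\dx=\int v(-\Delta)^{\alpha/2}v\dx$, so the same symmetrization yields
\[
\int\bigl|\na^{\alpha/2}G(w)\bigr|^2\dx=\frac{c_{d,\alpha}}{2}\iint\frac{\bigl(G(w(x))-G(w(y))\bigr)^2}{|x-y|^{d+\alpha}}\dx\dy.
\]

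Setting $F(t):=\sgn(t)|t|^{q-1}$ and $G(t):=\sgn(t)|t|^{q/2}$, the first inequality in \eqref{SV} reduces to the pointwise bound
\[
\bigl(F(a)-F(b)\bigr)(a-b)\ \ge\ \tfrac{4(q-1)}{q^2}\bigl(G(a)-G(b)\bigr)^2,\qquad a,b\in\R.
\]
For $q>1$ the function $F$ is absolutely continuous with $F'(t)=(q-1)|t|^{q-2}$ (integrable across $0$ since $q-2>-1$), $G$ is absolutely continuous with $G'(t)=(q/2)|t|^{q/2-1}$, and the algebraic identity $(G')^2=\tfrac{q^2}{4(q-1)}F'$ holds. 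Applying the Cauchy--Schwarz inequality to the representations $F(a)-F(b)=\int_b^a F'(t)\dt$ and $a-b=\int_b^a 1\dt$ then yields
\[
\bigl(F(a)-F(b)\bigr)(a-b)\ \ge\ \Bigl(\int_b^a\sqrt{F'(t)}\dt\Bigr)^2=\tfrac{4(q-1)}{q^2}\bigl(G(a)-G(b)\bigr)^2,
\]
which is exactly the required bound.

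The second inequality in \eqref{SV} follows by inserting into the representation of $\int|\na^{\alpha/2}G(w)|^2\dx$ the elementary pointwise fact
\[
\bigl|\sgn(a)|a|^{q/2}-\sgn(b)|b|^{q/2}\bigr|\ \ge\ \bigl||a|^{q/2}-|b|^{q/2}\bigr|,
\]
which is obvious when $a,b$ share a sign and reduces to $|a|^{q/2}+|b|^{q/2}\ge\bigl||a|^{q/2}-|b|^{q/2}\bigr|$ otherwise. The main delicacy of the argument is the initial reduction to the pointwise inequality: one must justify the symmetrized singular-integral representations for smooth compactly supported functions and verify the absolute continuity of $F$ across the origin in the subquadratic regime $q\in(1,2)$. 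Once those are in place, the substance of the proof is simply the one-dimensional Cauchy--Schwarz inequality combined with the algebraic relation $(G')^2=\tfrac{q^2}{4(q-1)}F'$.
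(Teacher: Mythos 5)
Your proof is correct on the range $\alpha\in(0,2)$, and it is more self-contained than the paper's treatment: the paper gives no proof at all, simply citing \cite[Theorem~2.1 and Condition~(1.7)]{LS}, whereas you derive the inequality from scratch via the symmetrized Dirichlet-form identity, the pointwise bound $(F(a)-F(b))(a-b)\ge\tfrac{4(q-1)}{q^2}(G(a)-G(b))^2$, and the observation $(G')^2=\tfrac{q^2}{4(q-1)}F'$ that turns this into a one-variable Cauchy--Schwarz estimate. The delicate points are handled correctly: $F$ and $G$ are absolutely continuous for every $q>1$ (the exponents $q-2$ and $q/2-1$ lie in $(-1,0)$ when $q\in(1,2)$, so the derivatives are locally integrable across $0$); the symmetrization identity for $\int F(w)(-\Delta)^{\alpha/2}w$ is legitimate for $w\in\mathcal{C}^\infty_c$ by Fubini at each truncation level $|x-y|>\varepsilon$ followed by monotone convergence, since the symmetrized integrand is nonnegative ($F$ is nondecreasing); and $\|\nabla^{\alpha/2}v\|_2^2$ agrees with the Gagliardo double integral in the extended-real sense, so once the first inequality forces finiteness, the second pointwise bound $|G(a)-G(b)|\ge\bigl||a|^{q/2}-|b|^{q/2}\bigr|$ integrates cleanly.

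The one gap is the endpoint $\alpha=2$, which the proposition includes but your singular-integral representation of $(-\Delta)^{\alpha/2}$ does not cover. That case is, however, immediate: integration by parts and the chain rule (again valid for $q>1$ since $\sgn w\,|w|^{q-1}$ and $\sgn w\,|w|^{q/2}$ are absolutely continuous functions of $w$, and $w\in\mathcal{C}^\infty_c$) give
\[
\int\sgn w\,|w|^{q-1}(-\Delta)w\dx=(q-1)\int|w|^{q-2}|\nabla w|^2\dx,\qquad
\int\bigl|\nabla\bigl(\sgn w\,|w|^{q/2}\bigr)\bigr|^2\dx=\tfrac{q^2}{4}\int|w|^{q-2}|\nabla w|^2\dx,
\]
so the first inequality is in fact an equality when $\alpha=2$, and the second follows from $\bigl|\nabla|w|^{q/2}\bigr|=\bigl|\nabla\bigl(\sgn w\,|w|^{q/2}\bigr)\bigr|$ a.e. (they agree wherever $w\neq 0$, and $\nabla w=0$ a.e. on $\{w=0\}$). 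Adding one line to this effect would close the proof.
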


\subsection*{Fractional order Sobolev spaces}
The fractional order Sobolev spaces are defined as
\[ H^{s,p}(\R^d)=\{v\in L^p(\R^d): \nabla^sv\in L^p(\R^d)\}=\{v\in
L^p(\R^d) : (I-\Delta)^{\frac{s}{2}}v \in L^p(\R^d)\},\] here with
$p\in(1,\infty)$, supplemented with the usual norm denoted by
$\|\cdot\|_{H^{s,p}}$, and we refer the reader to the books
\cite{Taylor-tools, TaylorIII} for properties of those spaces. In
particular for $s=\alpha-1$ with $\alpha\in(1,2)$, the following
well-known continuous embedding will be used repeatedly
\begin{equation}\label{Hbound} 
H^{\alpha-1,p}(\R^d)\subset L^\infty(\R^d)
\qquad\text{provided}\  \quad p>\frac{d}{\alpha-1} \quad (>1). 
\end{equation}  
We also recall the fractional integration theorem  \cite[Ch.~V,
  \S 1.2]{Stein}: the Riesz potential ${\mathcal
  I}_{s}=(-\Delta)^{-\frac{s}{2}}$ satisfies
\begin{equation}\label{Riesz} \|{\mathcal I}_s u\|_q\leq C(p,q,s)\|u\|_p\end{equation}
for all $s\in (0,d)$ and $p,\,q\in (1,\infty)$ satisfying 
 $\frac1{q}=\frac1{p}-\frac{s}{d}$.

\subsection*{Some functional inequalities}

We will  use the following \emph{Nash inequality}
\begin{equation} 
\|v\|_2^{2\big(1+\frac\alpha{d}\big)}\le
C_N\|\nabla^{\frac\alpha2}v\|^2_2\|v\|_1^{\frac{2\alpha}d}\label{N}
\end{equation} 
valid for all functions $v\in L^1(\R^d)$, such that
$\nabla^{\frac\alpha2}v\in L^2(\R^d)$, and with a constant
$C_N=C(d,\alpha)>0$.  The proof of \eqref{N} for $d=1$ can be found
in, {\it e.g.}, \cite[Lemma 2.2]{KMX}, and this extends easily to the
general case $d\ge 1$.

Moreover, we will use the following \emph{Gagliardo--Nirenberg type
  inequality}
\begin{lemma}\label{GN} 
For $p>1$ and $p\ge m-1$, the inequality 
\begin{equation}
\|u\|_p^a\le C_N\left\|\nabla^{\frac\alpha2}|u|^{\frac{r}2}\right\|^2_2\|u\|_1^b 
\label{inter}
\end{equation} 
holds with 
\begin{equation}
a=\frac{p}{p-1}\frac{d(r-1)+\alpha}{d}, \qquad
b=a-r=\frac{d(m-1)+p\alpha}{d(p-1)}, \qquad r=p+m-1.\label{a-b}
\end{equation} 
\end{lemma}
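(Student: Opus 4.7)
The strategy is to apply the Nash inequality \eqref{N} to $v := |u|^{r/2}$ (with $r=p+m-1$), and then interpolate the resulting $L^r$ and $L^{r/2}$ norms of $u$ between $L^1$ and $L^p$ so as to produce the combination $\|u\|_p^a \|u\|_1^{-b}$.

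First, I would substitute $v=|u|^{r/2}$ into \eqref{N}. Since $\|v\|_2^2=\|u\|_r^r$ and $\|v\|_1=\int|u|^{r/2}\dx$, this yields
\[
\|u\|_r^{r(1+\alpha/d)} \le C_N\,\bigl\|\nabla^{\alpha/2}|u|^{r/2}\bigr\|_2^2\left(\int|u|^{r/2}\dx\right)^{2\alpha/d}.
\]
Equivalently, $\bigl\|\nabla^{\alpha/2}|u|^{r/2}\bigr\|_2^2 \ge C_N^{-1}\|u\|_r^{r(1+\alpha/d)}\bigl(\int|u|^{r/2}\bigr)^{-2\alpha/d}$, so the desired inequality \eqref{inter} will follow once I bound $\|u\|_r$ from below and $\int|u|^{r/2}$ from above in terms of $\|u\|_p$ and $\|u\|_1$.

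The hypotheses $p>1$, $p\ge m-1$, and $m>1$ guarantee $1\le r/2\le p\le r$, so standard log-convexity of the $L^s$-norms (interpolation between $L^1$ and $L^r$, respectively between $L^1$ and $L^p$) gives
\[
\|u\|_p \le \|u\|_1^{\theta}\,\|u\|_r^{1-\theta},\qquad \theta=\frac{r-p}{p(r-1)},\qquad 1-\theta=\frac{r(p-1)}{p(r-1)},
\]
and
\[
\|u\|_{r/2}\le \|u\|_1^{\tau}\,\|u\|_p^{1-\tau},\qquad \tau=\frac{2p-r}{r(p-1)},\qquad 1-\tau=\frac{p(r-2)}{r(p-1)},
\]
from which $\|u\|_r \ge \|u\|_p^{1/(1-\theta)}\|u\|_1^{-\theta/(1-\theta)}$ and $\|u\|_{r/2}^{r/2}$ is controlled by a product of powers of $\|u\|_1$ and $\|u\|_p$. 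Inserting these two bounds into the Nash estimate produces $\|\nabla^{\alpha/2}|u|^{r/2}\|_2^2 \ge C^{-1}\|u\|_p^{A}\|u\|_1^{-B}$ for some explicit exponents $A,B$.

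The remaining task is the algebraic verification that $A=a$ and $B=b$ with $a,b$ given by \eqref{a-b}. Using $r-p=m-1$ one finds that the $\|u\|_p$-exponent collapses to
\[
A = \frac{p(r-1)(d+\alpha)}{d(p-1)} - \frac{p(r-2)\alpha}{d(p-1)} = \frac{p\bigl(d(r-1)+\alpha\bigr)}{d(p-1)} = a,
\]
while the $\|u\|_1$-exponent becomes
\[
B = \frac{(r-p)(d+\alpha)+(2p-r)\alpha}{d(p-1)} = \frac{d(m-1)+p\alpha}{d(p-1)} = b.
\]
I expect the only real obstacle to be this bookkeeping of exponents; the analytic content reduces to the Nash inequality plus log-convexity, and the case $r/2<1$ (which can occur when $1<m<2$ and $p$ is close to $1$) is handled identically since log-convexity of $s\mapsto \log\int|u|^s$ is valid for all $s>0$.
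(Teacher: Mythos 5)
Your proof follows exactly the paper's route: the Nash inequality \eqref{N} applied to $v=|u|^{r/2}$, followed by two H\"older interpolations to eliminate $\|u\|_r$ and $\|u\|_{r/2}$ in favor of $\|u\|_p$ and $\|u\|_1$; your $1-\theta$ and $1-\tau$ are the paper's $\gamma$ and $\delta$, and the exponent bookkeeping you exhibit is correct.

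The one defective step is the closing remark. The hypotheses $p>1$, $p\ge m-1$ do \emph{not} in fact ensure $1\le r/2$ (take $m=1.5$, $p=1.2$, giving $r/2=0.85$), and log-convexity of $s\mapsto\log\int|u|^s\dx$ does not rescue the case $r/2<1<p$: there $1$ lies strictly between $r/2$ and $p$, so convexity bounds $\int|u|\dx$ from above by powers of $\int|u|^{r/2}\dx$ and $\int|u|^p\dx$, which is a \emph{lower} bound on $\|u\|_{r/2}$, the opposite of what your argument requires. Equivalently, your $\tau=\frac{2p-r}{r(p-1)}$ exceeds $1$ exactly when $r<2$, so the interpolation exponent $\delta=1-\tau$ is negative and the second H\"older step is illegitimate. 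Both your argument and the paper's one-line proof therefore tacitly need $r\ge 2$, i.e.\ $p\ge 3-m$; this holds everywhere the lemma is actually invoked in the paper, but it is a genuine restriction and should be acknowledged rather than asserted away.
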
 
\begin{proof}
This inequality is a consequence of the Nash inequality \eqref{N} written
for $v=|u|^{\frac{r}2}$, {\it i.e. }
\begin{equation}\label{Nash-r}
  \|u\|_r^{r\big(1+\frac\alpha{d}\big)}\le
  C_N\left\|\nabla^{\frac\alpha2}|u|^{\frac{r}2}\right\|^2_2\|u\|_{\frac{r}2}^{\frac{r\alpha}{d}},
\end{equation}
and two H\"older inequalities
\[\|u\|_p\le \|u\|_r^\gamma\|u\|_1^{1-\gamma} \qquad \text{with} \quad
\gamma=\left(\frac{p-1}{r-1}\right)\frac{r}{p},\]
and
\[\|u\|_{\frac{r}2}\le \|u\|_p^\delta\|u\|_1^{1-\delta}\qquad \text{with}
\quad \delta=\left(\frac{r-2}{p-1}\right)\frac{p}{r}.\] 
Combining the above three inequalities, we get \eqref{inter}.  
\end{proof}

\section{Proof of Theorem~\ref{thm:self-sim}}
\label{sec:self-sim}

This section is devoted to the study of nonnegative self-similar
solutions for \eqref{eq:main} with $m>1$.  As explained above, this
problem reduces to a study of the elliptic-like
equation~\eqref{eq:ellip-sim} which for 
nonnegative  $\Phi$ takes the form 
\[-\lambda y\, \Phi = \Phi\nabla^{\alpha-1}( \Phi^{m-1}). \]
 Moreover, since we want to construct compactly
supported solutions, we are interested in solutions $\Phi$ vanishing
outside the unit ball $B_1$. This is the reason why we consider the
Dirichlet problem
\begin{equation}\label{Dirichlet}
\begin{split}
-\lambda y  = \nabla^{\alpha-1} (\Phi^{m-1}) & \quad \text{ in\ \ \ } B_1 ,\\
 \Phi=0  &\quad \text{ in\ \ \ } \R^d\setminus B_1 . 
 \end{split}
\end{equation}
It is well known that, in the case of nonlocal operators (such as
$\nabla^{\alpha-1}$), the homogeneous Dirichlet condition should be
understood in the form $ \Phi \equiv 0$ outside the domain $B_1$, and
not only $\Phi=0$ on the boundary $\partial B_1$. The reader is
referred to, {\it e.g.}, \cite{bci} for more {explanations}.

We claim that the proof of Theorem~\ref{thm:self-sim} reduces to the
following key computation. Here, ${}_2F_1$ denotes the classical
hypergeometric function defined in Section~\ref{sec:prelim}. 
\begin{lemma}\label{lem:getoor}
For all $\beta\in(0,2)$, $\beta <d$, and $\gamma>0$, we have 
\begin{equation}\label{eq:getoor}
{\mathcal I}_\beta\left((1-|y|^2)_+^{\frac{\gamma}{2}}\right) 
=\left\{
  \begin{array}{ll} 
    C_{\gamma,\beta,d} \times
    {}_2F_1\left(\frac{d-\beta}{2},-\frac{\gamma+\beta}{2};\frac{d}{2};|y|^2\right) 
    & {\rm for\ \  } |y| \le 1, \\
    \tilde{C}_{\gamma,\beta,d}\, |y|^{\beta-d} 
    \times {}_2F_1\left(\frac{d-\beta}{2},\frac{2-\beta}{2};\frac{d+\gamma}{2};\frac{1}{|y|^2} \right) 
    & {\rm for\ \  } |y| > 1,
  \end{array}
\right.
\end{equation}
with $C_{\gamma,\beta,d} = 2^{-\beta} \frac{\Gamma
  \left(\frac\gamma2+1\right)\Gamma \left(\frac{d-\beta}2\right)}{\Gamma
  \left(\frac{d}2\right)\Gamma \left(\frac{\beta+\gamma}2+1\right)}$ and
$\tilde{C}_{\gamma,\beta,d} = 2^{-\beta} \frac{\Gamma
  \left(\frac\gamma2+1\right)\Gamma \left(\frac{d-\beta}2\right)}{\Gamma
  \left(\frac{d}4\right)\Gamma \left(\frac{d+\gamma}2+1\right)}$.
\end{lemma}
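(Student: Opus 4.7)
The plan is to pass to the Fourier side, where $\mathcal{I}_\beta$ acts as multiplication by $|\xi|^{-\beta}$, compute the Fourier transform of the radial bump $(1-|\cdot|^2)_+^{\gamma/2}$ explicitly via a Bessel function, and then recognize the resulting inverse Fourier transform as a Weber--Schafheitlin integral of the type stated in \eqref{eq:ws}. The branching $|y|<1$ versus $|y|>1$ will emerge directly from the discontinuity of that integral at $a=b$.

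Concretely, first I would invoke the classical Bessel identity (see, e.g., \cite[Ch.~IV]{Stein})
\[
\mathcal{F}\bigl[(1-|\cdot|^2)_+^{\gamma/2}\bigr](\xi)
= c\,\Gamma\!\bigl(\tfrac{\gamma}{2}+1\bigr)\,2^{\gamma/2}\,|\xi|^{-(d+\gamma)/2}\,J_{(d+\gamma)/2}(|\xi|),
\]
valid for $\gamma>-2$ (hence for our $\gamma>0$). Multiplying by $|\xi|^{-\beta}$ gives the Fourier transform of the object of interest, and since everything is radial, the standard Hankel inversion
\[
\mathcal{F}^{-1}[h(|\cdot|)](y) = |y|^{1-d/2} \int_0^\infty h(t)\,J_{d/2-1}(|y|t)\,t^{d/2}\,dt
\]
yields
\[
\mathcal{I}_\beta\bigl[(1-|\cdot|^2)_+^{\gamma/2}\bigr](y)
= K\,|y|^{1-d/2} \int_0^\infty t^{-\beta-\gamma/2}\, J_{(d+\gamma)/2}(t)\, J_{d/2-1}(|y|t)\,dt,
\]
for an explicit constant $K=K(\gamma,\beta,d)$ collected from the two preceding prefactors.

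Now I would apply \eqref{eq:ws} in the two sub-cases. For $|y|<1$, take $a=1$, $b=|y|$, $\mu=\tfrac{d+\gamma}{2}$, $\nu=\tfrac{d}{2}-1$, $\lambda=\beta+\tfrac{\gamma}{2}$; one checks
\[
\tfrac{\nu+\mu-\lambda+1}{2}=\tfrac{d-\beta}{2},\qquad
\tfrac{\nu-\mu-\lambda+1}{2}=-\tfrac{\gamma+\beta}{2},\qquad \nu+1=\tfrac{d}{2},\qquad \tfrac{b^2}{a^2}=|y|^2,
\]
which is precisely the hypergeometric function in the first branch of \eqref{eq:getoor}; combining $K$ with the Weber--Schafheitlin prefactor and using $2^{\gamma/2}\cdot 2^{-(\beta+\gamma/2)}=2^{-\beta}$ then produces $C_{\gamma,\beta,d}$. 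For $|y|>1$ I would swap the roles of the two Bessel functions, i.e.\ $a=|y|$, $b=1$, $\mu=\tfrac{d}{2}-1$, $\nu=\tfrac{d+\gamma}{2}$; the factor $b^{\nu}a^{\lambda-\nu-1}$ combined with the overall $|y|^{1-d/2}$ produces the announced $|y|^{\beta-d}$, the hypergeometric parameters collapse to $\tfrac{d-\beta}{2}$ and $\tfrac{2-\beta}{2}$, and the argument becomes $|y|^{-2}$.

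The main obstacles are purely technical: (i) carefully collating the numerous $\pi^{d/2}$, $2^{\beta}$, and $\Gamma$-factors arising from the normalization of $\mathcal{F}$, the Hankel inversion, and the Weber--Schafheitlin formula, so as to match the stated constants $C_{\gamma,\beta,d}$ and $\tilde C_{\gamma,\beta,d}$ exactly; and (ii) checking the convergence hypothesis of \eqref{eq:ws}, namely $0<\lambda<\mu+\nu+1$, which here reads $0<\beta+\gamma/2<d+\gamma/2$, i.e.\ $0<\beta<d$, as assumed. The structural content of the lemma is forced by the Fourier calculation; the two-regime shape of the answer is exactly the discontinuity of the Weber--Schafheitlin integral at $a=b$, which corresponds geometrically to the sphere $|y|=1$.
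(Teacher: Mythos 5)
Your proposal follows essentially the same route as the paper's proof: pass to the Fourier side, express $\mathcal{F}[(1-|\cdot|^2)_+^{\gamma/2}]$ as a Bessel function, use the Hankel (Stein--Weiss) inversion formula for radial functions, and then invoke the Weber--Schafheitlin identity \eqref{eq:ws} with precisely the two parameter assignments you list. The one thing the paper does that you gloss over is that it first restricts to $\gamma > \max\{0,\,d-2\beta-1\}$ so that the radial integrand $|\xi|^{-\beta-(d+\gamma)/2}J_{(d+\gamma)/2}(|\xi|)$ is genuinely integrable over $\R^d$ (your condition $0<\lambda<\mu+\nu+1$ only addresses the convergence of the one-dimensional Weber--Schafheitlin integral near $0$, not the decay at infinity needed to justify the Hankel representation), and then extends the identity to all $\gamma>0$ by analytic continuation in the parameters; you would need to insert this extra step, or the same continuation argument, to make the convergence check complete.
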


The proof of this lemma is postponed to the end of this section. 

The following corollary is an immediate consequence of Lemma
\ref{lem:getoor} with $2-\beta=\alpha=\gamma$, of the property of
${}_2F_1$ formulated in \eqref{eq:hyper-poly}, and of the identity
$(-\Delta)^{\frac{\alpha}{2}}=(-\Delta){\mathcal I}_{2-\alpha}$.  It
has an important probabilistic interpretation, and recently, related
results and generalizations have been proved in \cite{Dyda}.
\begin{corollary}[Getoor {\cite[Th. 5.2]{getoor}}] \label{lem:getoor-genuine} For
all $\alpha \in (0,2]$, the identity
\[K_{\alpha,d}(-\Delta)^{\frac\alpha2} (1-|y|^2)_+^{\frac{\alpha}{2}} =
1\ \ {\rm in}\ \ B_1\]
holds true with the constant
$K_{\alpha,d}=\frac{\Gamma\left(\frac{d}2\right) }{2^\alpha
  \Gamma\left(1+\frac\alpha2\right) \Gamma \left(\frac{d+\alpha}2\right)}$.
\end{corollary}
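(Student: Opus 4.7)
The plan is to read off the corollary as a direct specialization of Lemma~\ref{lem:getoor}, exploiting the fact that one of the parameters of the hypergeometric function can be forced to be a negative integer, reducing the whole expression to a polynomial of degree one in $|y|^2$.

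First, for $\alpha\in(0,2)$ I would use the factorization $(-\Delta)^{\alpha/2}=(-\Delta)\circ\mathcal{I}_{2-\alpha}$, which lets us write
\[(-\Delta)^{\alpha/2}(1-|y|^2)_+^{\alpha/2}=-\Delta\,\mathcal{I}_{2-\alpha}\!\left((1-|y|^2)_+^{\alpha/2}\right).\]
Now I apply Lemma~\ref{lem:getoor} with the choice $\beta=2-\alpha$ and $\gamma=\alpha$. The key observation is that with this choice the second parameter of the hypergeometric function becomes
\[-\frac{\gamma+\beta}{2}=-\frac{\alpha+(2-\alpha)}{2}=-1,\]
so for $|y|\le 1$ the formula \eqref{eq:getoor} gives
\[\mathcal{I}_{2-\alpha}\!\left((1-|y|^2)_+^{\alpha/2}\right)=C_{\alpha,2-\alpha,d}\;{}_2F_1\!\left(\tfrac{d-2+\alpha}{2},-1;\tfrac{d}{2};|y|^2\right).\]
By the polynomial identity \eqref{eq:hyper-poly}, the hypergeometric factor collapses to $1-\tfrac{d-2+\alpha}{d}|y|^2$, an affine function of $|y|^2$.

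Next, I simply apply $-\Delta$ to this expression. Using $\Delta|y|^2=2d$, the Laplacian kills the constant and extracts a pure constant:
\[-\Delta\!\left(1-\tfrac{d-2+\alpha}{d}|y|^2\right)=2(d-2+\alpha).\]
It remains to collapse the multiplicative constant $2(d-2+\alpha)C_{\alpha,2-\alpha,d}$ into $1/K_{\alpha,d}$. Plugging in $C_{\gamma,\beta,d}=2^{-\beta}\Gamma(\gamma/2+1)\Gamma((d-\beta)/2)/(\Gamma(d/2)\Gamma((\beta+\gamma)/2+1))$ at $\beta=2-\alpha$, $\gamma=\alpha$, the $\Gamma((\beta+\gamma)/2+1)=\Gamma(2)=1$ factor simplifies nicely, and the functional equation $(d-2+\alpha)\Gamma\!\left(\tfrac{d-2+\alpha}{2}\right)=2\Gamma\!\left(\tfrac{d+\alpha}{2}\right)$ converts the remaining product into exactly $2^{\alpha}\Gamma(1+\alpha/2)\Gamma((d+\alpha)/2)/\Gamma(d/2)=1/K_{\alpha,d}$. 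This is the only computation of substance in the proof.

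Two minor loose ends would need a line. The borderline $\alpha=2$ is outside the scope of the Riesz potential identity, but is trivial: $-\Delta(1-|y|^2)=2d$ and $K_{2,d}=1/(2d)$ by direct computation from the formula for $K_{\alpha,d}$. The hypothesis $\beta<d$ in Lemma~\ref{lem:getoor} translates to $\alpha>2-d$, which is automatic for $d\ge 2$ and restricts to $\alpha\in(1,2)$ when $d=1$; the remaining low-dimensional cases can be absorbed by a density/continuation argument or treated directly, but this is bookkeeping rather than a genuine obstacle. The true engine of the proof is Lemma~\ref{lem:getoor}; once it is granted, there is essentially no further work beyond checking that the gamma-function constants line up.
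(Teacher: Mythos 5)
Your proof is correct and follows exactly the route the paper indicates: Lemma~\ref{lem:getoor} with $\beta=2-\alpha$, $\gamma=\alpha$, the degenerate hypergeometric identity \eqref{eq:hyper-poly}, the factorization $(-\Delta)^{\alpha/2}=(-\Delta)\mathcal{I}_{2-\alpha}$, and the $\Gamma$-function bookkeeping; the paper merely asserts the corollary is an ``immediate consequence'' of these, and you have supplied the details (including the verification $2(d-2+\alpha)C_{\alpha,2-\alpha,d}=K_{\alpha,d}^{-1}$, which checks out). Your remarks on $\alpha=2$ and the $\beta<d$ restriction also agree with how the paper handles the analogous issues in the proof of Theorem~\ref{thm:self-sim}.
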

\bigskip

Before proving Lemma~\ref{lem:getoor},  we first use it to derive  Theorem~\ref{thm:self-sim}.
\begin{proof}[Proof of Theorem~\ref{thm:self-sim}]
  We check that $u(t,x) = t^{-d\lambda} \Phi_{\alpha,m}
  (t^{-\lambda}x)$ is a weak solution of \eqref{eq:main} in the sense
  of Definition~\ref{defi:sol}.  First, $u \in L^1 (Q_T)$ if and only
  if $\Phi_{\alpha,m} \in L^1 (\R^d)$, which is obviously true. For later use,
  it is convenient to introduce the function 
$\Phi_\alpha(u)= \left(1-|y|^2\right)_+^\frac{\alpha}{2}$, 
so that $k_{\alpha,d} \Phi_\alpha=\Phi_{\alpha,m}^{m-1} $.

The fact that, for all $\eta,T$ such that $0 < \eta < T$,
$u\nabla^{\alpha-1}(u^{m-1})$ and $\nabla^{\alpha-1}(u^{m-1})$ are
locally integrable in $(\eta,T) \times \R^d$ follows from
\[{\mathcal I}_{2-\alpha}(\Phi_{\alpha}) \in H^{1,1}_{\rm {loc}} (\R^d),\]
which we prove by computing ${\mathcal I}_{2-\alpha} (\Phi_\alpha)$. In order to
do so, we first assume that $\alpha > 2 -d$, and we apply
Lemma~\ref{lem:getoor} with $\gamma = \alpha \in (0,2)$ and $\beta = 2
-\alpha$, we use equation \eqref{eq:hyper-poly}, and we get
\begin{equation}\label{eq:ialpha}
\qquad  {\mathcal I}_{2-\alpha} (\Phi_\alpha) (y)
= \left\{\begin{array}{ll}
      C_{\alpha,2-\alpha,d} \left( 1 - \frac{d+\alpha-2}{d} |y|^2 \right) & \text{ if\ \ } |y| \le 1,\\
      \tilde{C}_{\alpha,2-\alpha,d} |y|^{2-(d+\alpha)}\, {}_2F_1
      \left(\frac{d+\alpha}2 -1,\frac\alpha2;\frac{d+\alpha}2;
        \frac{1}{|y|^2}\right) & \text{ if\ \  } |y| > 1.
\end{array}\right. 
\end{equation}
The right-hand side of equation \eqref{eq:ialpha} defines a locally
integrable function because
\[ a+b-c = (\frac{d+\alpha}{2}-1)+\frac{\alpha}{2}-\frac{d+\alpha}{2}<0.\]  
We then
deduce that
\[\nabla^{\alpha -1} (\Phi_\alpha) (y) = \nabla {\mathcal I}_{2-\alpha}
(\Phi_\alpha)(y)= - \frac{\lambda}{k_{\alpha,d}} y \ \ {\rm
  for}\ \ y\in B_1.\] 
Note  also that $\nabla^{\alpha -1} (\Phi_\alpha)$ can be computed
outside $B_1$ thanks to the differentiation
formula~\eqref{eq:hyper-diff}. 

We now remark that
\[\Phi_{\alpha,m} (y)\left(\nabla^{\alpha-1} \Phi_{\alpha,m}^{m-1}\right) (y)= -
\lambda y \,\Phi_{\alpha,m} (y) \ \ {\rm for\ all}\
\ y \in \R^d,\]
which is in $L^1 (\R^d)$. Moreover, the following equalities hold true in
the sense of distributions in $Q_T$,
\begin{align*}
\partial_tu (t,x) & = - \lambda t^{-d\lambda -1} \nabla_y \cdot (y
\Phi_{\alpha,m}) (t^{-\lambda} x),\\ \nabla_x\cdot( u
\nabla^{\alpha-1} (|u|^{m-1})) (t,x) &= t^{-d\lambda -1}
\nabla_y\cdot( \Phi_{\alpha,m} \nabla^{\alpha-1}\Phi_{\alpha,m}^{m-1})
(t^{-\lambda}x).
\end{align*}
This allows us to conclude that $u$ is indeed a weak solution of
\eqref{eq:main} in $(\eta,T)\times\R^d$ for all $0<\eta<T<\infty$ if
$\alpha > 2 -d$. 

Assume now that $0 <\alpha \le 2 -d$, which  means that
  $d=1$ and $\alpha \le 1$. The critical case $\alpha =1$ can be
obtained by passing to the limit as $\alpha \searrow 1$; indeed, the
constants $C_{\alpha,2-\alpha,d}(d+\alpha-2)$ and
$\tilde{C}_{\alpha,2-\alpha,d}(d+\alpha-2)$ appearing in
\eqref{eq:ialpha} simplify thanks to the relation $z \Gamma (z) =
\Gamma(z+1)$. If now $\alpha <1$, we can argue as above by analytic
continuation. The proof is now complete.
\end{proof}

\bigskip

Now we turn to the proof of the main technical lemma. 
\begin{proof}[Proof of Lemma~\ref{lem:getoor}]
  We first assume that $\beta \in (0,d)$ and that $\gamma > \max\{0,d-
  2\beta -1\}$, and we then argue by the analytic continuation with
  a choice of parameters corresponding each time to ${}_2F_{1}$
  defined and bounded for all $|y|\le 1$.
    
  The Fourier transform of $\Phi_\gamma (y)
  =\left(1-|y|^2\right)^{\frac\gamma2}_+$ is expressed in terms of Bessel
  functions, see, {\em e.g.}, \cite[Ch. IV, Sec. 3]{Stein}
\[
\mathcal{F}( \Phi_\gamma) (\xi) = 2^{\frac\gamma2}\Gamma
\left(\frac{\gamma}2+1\right) \frac1{|\xi|^{\frac{d+\gamma}2}}
J_{\frac{d+\gamma}2} (|\xi|).
\]
Since ${\mathcal I}_\beta$ is the Fourier multiplier of symbol $|\xi|^{-\beta}$, ${\mathcal I}_\beta
(\Phi_\gamma)$ is the (inverse) Fourier transform of the following
radially symmetric function
\[
  2^{\frac\gamma2}\Gamma
\left(\frac{\gamma}2+1\right) \frac1{|\xi|^{\frac{d+\gamma}2+\beta}}
J_{\frac{d+\gamma}2} (|\xi|).
\]
We recall that by properties of Bessel functions collected in
Section~\ref{sec:prelim}, we have $J_\nu(r) = {\mathcal O}
\left(r^{\nu}\right)$ as $r \to 0$ and $J_\nu(r) = {\mathcal
  O}\left(r^{-\frac12}\right)$ as $r \to \infty$. We see that the
previous function is integrable since $\beta <d$ and $d < \gamma+
2\beta +1$.  Thanks to \cite[Th.~3.3]{stein-weiss}, we get
\begin{eqnarray*}
{\mathcal I}_\beta (\Phi_\gamma) (y) &=& 2^{\frac\gamma2}\Gamma \left(\frac{\gamma}2+1\right)
|y|^{1 - \frac{d}2} \int_0^{\infty} \frac{1}{t^{\frac{d+\gamma}2+\beta}}
J_{\frac{d+\gamma}2}(t) \,t^{\frac{d}2} J_{\frac{d}2-1} (t|y|) \,{\rm d}t \\
& = &  2^{\frac\gamma2}\Gamma \left(\frac{\gamma}2+1\right)
|y|^{1 - \frac{d}2} \int_0^{\infty} t^{-(\frac{\gamma}2+\beta)}
J_{\frac{d+\gamma}2}(t) J_{\frac{d}2-1} (t|y|) \,{\rm d}t.
\end{eqnarray*}

\noindent
We obtain \eqref{eq:getoor} applying \eqref{eq:ws} with the following
choice of parameters: 
\begin{itemize}
\item if $|y| \le 1$, we put \ \ $\lambda = \frac\gamma2+\beta$, $\mu =
\frac{d+\gamma}2$, $\nu=\frac{d}2-1$, $a=1$ and $b=|y|$, 

\item if $|y| > 1$, we put \ \ $\lambda = \frac\gamma2+\beta$, $\mu =
\frac{d}2-1$, $\nu=\frac{d+\gamma}2$, $a=|y|$ and $b=1$.  
\end{itemize} 
\end{proof}

\section{A regularized problem}

In order to construct weak solutions of \eqref{eq:main} for general
initial data, we first consider the following regularized problem
\begin{equation}\label{reg}
\partial_t u =\delta\Delta u+\nabla\cdot\left(|u|\nabla^{\alpha-1}(G(u))\right),\ \ \ u(0,x)=u_0(x), 
\end{equation} 
where $G: \R \to \R$ satisfies
\begin{equation}\label{hyp:G}
\left\{\begin{array}{l}
 G \text{ differentiable and increasing}, \\ G(0)=G'(0)=0, \\  
G' \text{ locally Lipschitz continuous}. 
\end{array}\right.
\end{equation}
 Remark that for $m \ge 3$ or $m=2$, the function $G(u)=|u|^{m-2}u$ satisfies
 \eqref{hyp:G}. For $m \in (1,3)$, we consider the following  approximation $G=G_\e$ of $|u|^{m-2}u$
\[
G_\eps(u)=\sgn u\left(\left(u^2+\eps^2\right)^{\frac{m-1}2}-\eps^{m-1}\right)
\]
with $\eps >0$. 
The following theorem holds true for a general function $G$ satisfying \eqref{hyp:G}.
\begin{theorem}[Existence of solutions to the regularized problem] \label{thm:reg} 
Let $\delta >0$ and assume that $G$ is an arbitrary function  satisfying \eqref{hyp:G}. Moreover,
assume
\begin{equation}\label{icc}
u_0\in \begin{cases}
L^1(\R^d)\cap L^\infty(\R^d) & \text{if}  \quad \alpha\in(0,1],\\
L^1(\R^d)\cap \big(\cap_{p > p_\alpha} H^{\alpha-1,p}(\R^d)\big) 
& \text{if} \quad \alpha\in(1,2),
\end{cases}
\end{equation}
with $p_\alpha =\frac{d}{\alpha-1}>1$. 
There exists a unique function $u$ in the space
\begin{equation}\label{rsp}
u\in 
\left\{
\begin{array}{lcc}
{\mathcal C}\left([0,\infty),L^1(\R^d)\cap L^\infty(\R^d)\right)& \text{if}\  \; \alpha\in(0,1],\\
\cap_{p > p_\alpha} {\mathcal C}\left([0,\infty),L^1(\R^d)\cap 
  H^{\alpha-1,p}(\R^d)\right) & \text{if}\ \; \alpha\in(1,2),
\end{array}\right.
\end{equation} 
satisfying problem~\eqref{reg} in the usual weak sense
\begin{equation}\label{distrib:reg}
\iint \left(u\partial_t \varphi -
|u|\nabla^{\alpha-1} (G(u))\cdot\nabla\varphi -\delta \nabla u\cdot\nabla \varphi \right) \dt \dx =0
\end{equation}
for all  $\varphi\in \mathcal{C}^\infty_c (Q_T)$.

  Moreover, $u(t,x)$ is nonnegative if the initial
condition $u_0$ is so, and for all $t>0$ and  $q\in
[1,\infty]$ we have
\begin{equation}\label{thm:mass}
\int u(t,x) \dx=\int u_0(x) \dx\qquad \text{and} \qquad
\|u(t)\|_q\leq \|u_0\|_q.
\end{equation}
\end{theorem}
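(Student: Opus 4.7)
The plan is to construct $u$ via the Duhamel formula
\[
u(t) = e^{\delta t \Delta} u_0 + \int_0^t e^{\delta(t-s)\Delta}\,\nabla\cdot\bigl(|u(s)|\nabla^{\alpha-1}G(u(s))\bigr)\ds,
\]
exploiting the fact that the parabolic regularization $\delta\Delta$ strictly dominates the nonlocal drift of order~$\alpha<2$. First I would set up a contraction-mapping argument for the right-hand side of this Duhamel formula, working in $X_T=\mathcal{C}([0,T];L^1(\R^d)\cap L^\infty(\R^d))$ when $\alpha\in(0,1]$, and in $X_T=\cap_{p>p_\alpha}\mathcal{C}([0,T];L^1(\R^d)\cap H^{\alpha-1,p}(\R^d))$ when $\alpha\in(1,2)$, where in the latter case the embedding \eqref{Hbound} furnishes $L^\infty$-control. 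The bilinear estimate closing the fixed point combines the smoothing of $\nabla\cdot e^{\delta t\Delta}$ (which costs $(\delta t)^{-1/2}$ in any $L^q$-norm), the mapping properties of $\nabla^{\alpha-1}=\nabla\,\mathcal{I}_{2-\alpha}$ implied by \eqref{Riesz}, and the fact that under \eqref{hyp:G} the map $v\mapsto G(v)$ is locally Lipschitz into the appropriate scale. Together these yield a contraction constant of the form $CT^\theta$ with $\theta>0$, hence a unique mild solution on some interval $[0,T_0]$ depending only on $\|u_0\|$.

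To globalize and to prove \eqref{thm:mass}, I would first regularize the datum by $u_0^n\in\mathcal{C}^\infty_c(\R^d)$, apply the classical quasilinear parabolic theory (legitimate because $\delta>0$ and $G$ is smooth) to obtain classical solutions $u^n$, and then test \eqref{reg} against $q|u^n|^{q-2}u^n$. Two integrations by parts together with the identity $q|u|^{q-1}\nabla u=\nabla(\sgn u\cdot|u|^q)$ give
\[
\frac{d}{dt}\int|u^n|^q\dx=-\delta q(q-1)\int|u^n|^{q-2}|\nabla u^n|^2\dx-(q-1)\int\sgn(u^n)\,|u^n|^q\,(-\Delta)^{\frac\alpha2}G(u^n)\dx.
\]
The first term is nonpositive; the second is as well once one substitutes $w=G(u^n)$ (a monotone change of variable since $G$ is increasing with $G(0)=0$) and applies the Stroock--Varopoulos inequality (Proposition~\ref{prop:SV}) to the increasing function $F(s)=\sgn(s)\,|G^{-1}(s)|^q$, which satisfies $F(0)=0$. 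This produces the uniform bound $\|u^n(t)\|_q\le\|u_0^n\|_q$ in $n$ and $q$, yielding the $L^\infty$ bound as $q\to\infty$; mass conservation follows by pairing \eqref{reg} with cut-offs $\chi_R\to\un$ and the fact that $|u^n|\nabla^{\alpha-1}G(u^n)\in L^1$.

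The $L^1$-contraction needed to pass to the limit $n\to\infty$, as well as the uniqueness statement, are obtained by Kato's trick of testing the equation for $u_1-u_2$ against a smoothed $\sgn_\kappa(u_1-u_2)$ and exploiting the monotonicity of $G$ together with Proposition~\ref{prop:SV} applied to $G(u_1)-G(u_2)$, to reach $\frac{d}{dt}\|u_1(t)-u_2(t)\|_1\le 0$ after passing to the limit $\kappa\downarrow 0$. Positivity preservation is argued in the same spirit by testing against $\sgn_-(u^n)$ and exploiting $G(0)=0$.

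The main obstacle, I expect, is the rigorous justification of the $L^q$-dissipation: Proposition~\ref{prop:SV} couples a single $w$ with $(-\Delta)^{\alpha/2}w$, whereas the nonlocal dissipation arising here couples $\sgn u\,|u|^q$ with $(-\Delta)^{\alpha/2}G(u)$, so the full strength of the monotonicity assumptions \eqref{hyp:G} must be used to rewrite it in the canonical form $\int F(w)(-\Delta)^{\alpha/2}w\dx$ with $w=G(u)$ and $F$ nondecreasing, $F(0)=0$. A subsidiary but genuine technical point is that the constants in \eqref{thm:mass} must be independent of the regularization parameters $\delta$ and~$\e$, which is crucial for the later passages $\e\searrow 0$ and $\delta\searrow 0$.
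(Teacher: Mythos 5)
Your overall strategy --- Duhamel + Banach fixed point in the same spaces, then $L^q$ a priori estimates by testing with $|u|^{q-2}u$ and exploiting the monotonicity of $G$, then positivity --- matches the paper's architecture, and the fixed-point step is essentially identical. Your identification of the main obstacle (rewriting the dissipation $\int \sgn u\,|u|^{q}(-\Delta)^{\alpha/2}G(u)$ in a form where nonnegativity is manifest) is also the right one; the paper isolates precisely this point in Lemma~\ref{lem:hg}, which it proves via the C\'ordoba--C\'ordoba pointwise inequality $(-\Delta)^{\alpha/2}g(v)\le g'(v)(-\Delta)^{\alpha/2}v$ rather than by invoking Proposition~\ref{prop:SV}; as stated, \eqref{SV} is specific to $\sgn w\,|w|^{q-1}$ and does not directly cover a general increasing $F$ with $F(0)=0$, so the argument should run through Lemma~\ref{lem:hg} (or the general version in the reference \cite{LS}) rather than \eqref{SV}.

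There are, however, two genuine gaps. First, global continuation when $\alpha\in(1,2)$: the contraction time $T_0$ in the fixed-point argument depends on $\|u_0\|_{L^1\cap H^{\alpha-1,p}}$, because the embedding \eqref{Hbound} is what supplies the $L^\infty$ control needed to bound the nonlinearity, and the $L^q$ a priori estimates you derive do not control the $H^{\alpha-1,p}$ norm. So the step-by-step extension you rely on ("an interval depending only on $\|u_0\|$", with that norm non-increasing) works for $\alpha\in(0,1]$ but breaks down for $\alpha\in(1,2)$. The paper closes this by a separate argument: using the Duhamel formula together with the Lipschitz estimate \eqref{Riesz3} at $v=0$ to derive an integral inequality for $\|u(t)\|_{H^{\alpha-1,p}}$ with a kernel singularity $(t-s)^{-\alpha/2}$, and then applying the singular Gronwall lemma to rule out finite-time blow-up. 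Without some such device your proof does not give global existence in this regime.

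Second, you justify the $L^q$ energy identity by appealing to "classical quasilinear parabolic theory" for smoothed data. That is not available here: because of the nonlocal drift $\nabla^{\alpha-1}G(u)$, equation \eqref{reg} is not a quasilinear second-order parabolic equation in the classical sense, and one cannot simply quote regularity results for such equations. The paper spends Corollary~\ref{cor:reg} (maximal $L^q$--$L^p$ regularity for the heat semigroup plus an iterative bootstrap in $H^{\beta,p}$, distinguishing $\alpha\le1$ from $\alpha>1$) to establish $u\in\mathcal{C}^1((0,T),L^p)\cap\mathcal{C}((0,T),H^{1,p})$, and then Proposition~\ref{prop:useful} is proved by a delicate mollification of test functions using that regularity. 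Your shortcut bypasses the place where real work is needed. A minor further divergence: you derive positivity and pass to the limit via a Kato-type $L^1$-contraction estimate; the paper instead obtains positivity cheaply from mass conservation $\int u=\int u_0$ combined with $\|u(t)\|_1\le\|u_0\|_1$, which yields $\int u_-\le\int(u_0)_-$ and hence $u\ge0$ when $u_0\ge0$. Your route would work but requires justifying Kato's inequality in the nonlocal setting, another nontrivial regularity issue the paper avoids.
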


\subsection*{Local-in-time  existence of mild solutions}

\begin{proposition}
\label{prop:loc-time-ex}
Let  $p>p_\alpha =\frac{d}{\alpha-1}$.  
There exists $T>0$  depending only on $u_0$, and a function $u$ in the space
\begin{equation}\label{rsp-loc}
u\in 
\left\{
\begin{array}{lcc}
{\mathcal C}\left([0,T),L^1(\R^d)\cap L^\infty(\R^d)\right)& \text{if}\  \; \alpha\in(0,1],\\
{\mathcal C}\left([0,T),L^1(\R^d)\cap H^{\alpha-1,p}(\R^d)\right) & \text{if}\ \; \alpha\in(1,2)
\end{array}\right.
\end{equation} 
such that
\begin{equation}\label{Duh}
 u (t) ={\rm  e}^{\delta t\Delta} u_0 + \int_0^t \nabla {\rm e}^{\delta(t-s) \Delta}\cdot
\Psi (u(s)) \ds \qquad {\rm with}  \quad \Psi (u) = |u| \nabla^{\alpha-1} G(u),
\end{equation}
in $\mathcal{C}[0,T],L^1 (\R^d) \cap L^\infty (\R^d))$  where
 ${\rm e}^{t \Delta}$ denotes the heat semigroup.
\end{proposition}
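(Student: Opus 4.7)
The plan is a standard Banach fixed-point argument applied to the Duhamel map
\[
\Theta(u)(t) = {\rm e}^{\delta t \Delta} u_0 + \int_0^t \nabla {\rm e}^{\delta(t-s)\Delta} \cdot \Psi(u(s))\, \ds,
\qquad \Psi(u)=|u|\nabla^{\alpha-1}G(u),
\]
performed in a closed ball of the Banach space $Y_T = \mathcal{C}([0,T], X)$, where $X = L^1(\R^d) \cap L^\infty(\R^d)$ when $\alpha \in (0,1]$ and $X = L^1(\R^d) \cap H^{\alpha-1,p}(\R^d)$ when $\alpha \in (1,2)$. The radius is chosen as $R = 2\|u_0\|_X$ and $T$ will be shrunk at the end of the argument.

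First I would record the linear bounds: $t \mapsto {\rm e}^{\delta t\Delta}u_0$ lies in $\mathcal{C}([0,\infty), X)$ with norm bounded by $\|u_0\|_X$ (using that the heat semigroup commutes with $(I-\Delta)^{(\alpha-1)/2}$ and is a contraction on $L^p$), together with the gradient--heat smoothing estimates of the type
\[
\|\nabla {\rm e}^{\delta t\Delta} f\|_X \le C(\delta, X, Z)\, t^{-\kappa}\, \|f\|_Z
\]
for a suitable source space $Z$ (a sum of Lebesgue or $H^{s,p}$ spaces) and an exponent $\kappa<1$. The factor $t^{-1/2}$ coming from the gradient combined with the natural Lebesgue-smoothing exponents can always be kept below $1$ by choosing $Z$ with enough integrability, which is crucial for time integrability below.

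The nonlinear step is to prove $\|\Psi(u)\|_Z \le C(R)$ whenever $\|u\|_X \le R$ and the corresponding Lipschitz estimate $\|\Psi(u_1)-\Psi(u_2)\|_Z \le C(R)\|u_1-u_2\|_X$. Since $G\in \mathcal{C}^{1,1}_{\mathrm{loc}}$ with $G(0)=G'(0)=0$, the Nemytskii map $u\mapsto G(u)$ is Lipschitz from $X$ into itself with a constant depending only on $R$; for $\alpha>1$ this uses the embedding \eqref{Hbound} (giving $u\in L^\infty$) together with a standard composition estimate in $H^{\alpha-1,p}$, which is valid because $\alpha-1<1$ and $G'$ is Lipschitz. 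Then I would control $\nabla^{\alpha-1}G(u)$ according to the range of~$\alpha$: for $\alpha\in(0,1]$ the operator is smoothing of order $1-\alpha$, and splitting the singular integral kernel $z/|z|^{d+\alpha}$ from \eqref{frac:int} into $|z|\le 1$ (bounded by $\|G(u)\|_\infty$ times an integrable kernel) and $|z|>1$ (bounded by $\|G(u)\|_1$ times an $L^\infty$ kernel tail) yields $\nabla^{\alpha-1}G(u)\in L^\infty$; for $\alpha\in(1,2)$ one has by Plancherel/Mihlin that $\nabla^{\alpha-1}:H^{\alpha-1,p}\to L^p$ is bounded, so $\nabla^{\alpha-1}G(u)\in L^p$. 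In both cases multiplication by $|u|\in L^\infty$ (automatic in the first case, by \eqref{Hbound} in the second) controls $\Psi(u)$ in the required space $Z$.

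Combining these ingredients,
\[
\|\Theta(u_1) - \Theta(u_2)\|_{Y_T} \le C(\delta, R)\, T^\sigma\, \|u_1 - u_2\|_{Y_T}
\]
for some $\sigma>0$, with an analogous self-mapping estimate for $\Theta(u)$ itself. Picking $T=T(\delta,\|u_0\|_X)$ small enough makes $\Theta$ a strict contraction on the closed ball of radius $R$, and the Banach fixed-point theorem furnishes the unique mild solution $u\in Y_T$. Continuity in time with values in $X$ follows from the integral formula \eqref{Duh} together with the linear and nonlinear estimates. I expect the main technical obstacle to be, in the case $\alpha\in(1,2)$, the composition estimate $u\mapsto G(u)$ in $H^{\alpha-1,p}$ with only $G\in\mathcal{C}^{1,1}_{\mathrm{loc}}$, together with the careful balancing of Lebesgue/Sobolev exponents needed to keep the singular time weight $(t-s)^{-\kappa}$ integrable on $[0,T]$ in both regimes.
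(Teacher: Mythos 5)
Your overall plan — a Banach fixed-point argument for the Duhamel map in $\mathcal{C}([0,T],X)$, with $X=L^1\cap L^\infty$ for $\alpha\in(0,1]$ and $X=L^1\cap H^{\alpha-1,p}$ for $\alpha\in(1,2)$, followed by the standard heat-gradient smoothing bounds and a Lipschitz estimate on $\Psi$ — is the same as the paper's. The $\alpha\in(1,2)$ branch (composition and Moser-type product estimates in $H^{\alpha-1,p}$, using $G'$ locally Lipschitz and the embedding into $L^\infty$) also follows the paper's route.

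There is, however, a genuine gap in your treatment of $\alpha\in(0,1]$. You claim that splitting the kernel $z/|z|^{d+\alpha}$ into $|z|\le1$ and $|z|>1$ yields $\nabla^{\alpha-1}G(u)\in L^\infty$ from the bounds $\|G(u)\|_\infty$ and $\|G(u)\|_1$. This fails at the endpoint $\alpha=1$: the kernel magnitude there is $|z|^{-d}$, which is not integrable near $z=0$, and more to the point $\nabla^0$ is the vector of Riesz transforms, which are \emph{not} bounded from $L^\infty$ to $L^\infty$. Even for $\alpha<1$ your split only exploits boundedness of $G(u)$ and gives no extra cancellation at $z=0$, so you are right at the threshold and lose uniformity as $\alpha\uparrow1$. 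The paper avoids this issue entirely by \emph{not} asking for an $L^\infty$ bound on $\nabla^{\alpha-1}G(u)$: it factors $\nabla^{\alpha-1}=\nabla^0\mathcal{I}_{1-\alpha}$, uses the fractional integration theorem $\mathcal{I}_{1-\alpha}:L^p\to L^q$ with $1/q=1/p-(1-\alpha)/d$ together with the $L^q$-boundedness of Riesz transforms for $q\in(1,\infty)$, and then pairs $\nabla^{\alpha-1}G(u)\in L^q$ with $|u|\in L^{q^\ast}$ by H\"older to place $\Psi(u)$ in $L^1$ and, by a similar splitting, in $L^q$. You should replace the $L^\infty$ target by this $L^q$ mapping and the H\"older pairing; once that is done the time-singularity $t^{-\kappa}$ can indeed be kept integrable by choosing $q^\ast>d$, as the paper does.

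A secondary remark: you assert the Lipschitz estimate $\|\Psi(u_1)-\Psi(u_2)\|_Z\le C(R)\|u_1-u_2\|_X$ but do not exhibit the decomposition that makes it work. The paper uses $\Psi(u)-\Psi(v)=(|u|-|v|)\nabla^{\alpha-1}G(u)+|v|\nabla^{\alpha-1}(G(u)-G(v))$ and then bounds each piece separately (by the $L^q$ mapping just described for $\alpha\le1$, and by the $H^{\alpha-1,p}$ composition/product estimates for $\alpha>1$). Without some such splitting the contraction estimate does not follow directly from the mapping bounds, because $\Psi$ is a product of two nonlinear quantities.
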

\begin{remark}
We identify the heat semigroup ${\rm e}^{t\Delta}$ and its kernel
$(4\pi t)^{-\frac{d}{2}}\exp\left(-\frac{|x|^2}{4t}\right)$. We will
use the following classical fact
\begin{equation} \label{lin} \left\|\nabla^\beta
{\rm e}^{\delta t\Delta}v \right\|_p\le C(p,r,\beta,\delta)
  t^{-\frac{d}{2}\left(\frac{1}{r}-\frac{1}{p}\right)-\frac{\beta}{2}}
  \|v\|_r
\end{equation}
with $1\leq r\leq p\leq \infty$, and $\beta \in [1,2)$.
\end{remark}
Now we turn to the proof of Proposition~\ref{prop:loc-time-ex}.
\begin{proof}[Proof of Proposition~\ref{prop:loc-time-ex}]
We look for a solution $u \in {\mathcal C}([0,T],X)$ as a fixed point of the map
\[ {\mathcal T}: u\mapsto {\rm e}^{\delta
  t\Delta}u_0 + \int_0^t \nabla{\rm e}^{\delta(t-s)\Delta}\cdot \Psi (u(s)) \ds, \]
where $X$ is chosen as follows
\begin{equation}\label{Xspace}
X = \begin{cases} L^1 (\R^d) \cap L^\infty (\R^d) & \text{ if }
  \alpha \in (0,1], \\ L^1 (\R^d) \cap H^{\alpha-1,p} (\R^d) & \text{
  if } \alpha \in (1,2).\end{cases} 
\end{equation} 
The associated norms are
$\|u\|_1 + \|u\|_Y$ with $Y= L^\infty (\R^d)$ and $Y=
H^{\alpha-1,p}(\R^d)$, respectively.  We show that $\mathcal{T}$ has a
fixed point by the Banach contraction principle as soon as
$T=T(\|u_0\|_X)>0$ is sufficiently small.

In both cases, it is enough to prove the  following lemma.
\begin{lemma}\label{lem:invariance+contrac}
For all $T \in (0,1)$, the operator $\mathcal{T}$ maps
$\mathcal{C}([0,T],X)$ into itself.  Moreover, there exist $C>0$ and
$\gamma >0$ such that for all $u,v \in \overline{B}(0,R) \subset
\mathcal{C}([0,T],X)$,
\begin{equation}
\label{estim:lip}
 \|\mathcal{T} (u) - \mathcal{T}(v)\|_{\mathcal{C}([0,T],X)} \le
 C_1(R) T^\gamma \|u-v\|_{\mathcal{C}([0,T],X)},
\end{equation}
where $C_1(R)$ is a constant which also depends on $\alpha, d,
 \eps, m, \delta$ {\rm (}and on $p$ if $\alpha \in (1,2)${\rm )}.
\end{lemma}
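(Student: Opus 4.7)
The plan is a standard Picard fixed-point argument: decompose $\mathcal{T}(u)(t)=e^{\delta t\Delta}u_0+N(u)(t)$ with $N(u)(t)=\int_0^t\nabla e^{\delta(t-s)\Delta}\cdot\Psi(u(s))\,ds$. Since $\mathcal{T}(u)-\mathcal{T}(v)=N(u)-N(v)$, proving \eqref{estim:lip} with a constant that is linear in $R$ automatically controls $\|\mathcal{T}(u)\|_{\mathcal{C}([0,T],X)}\le\|e^{\delta t\Delta}u_0\|_{\mathcal{C}([0,T],X)}+C_1(R)T^{\gamma}R$ via the splitting $\mathcal{T}(u)=\mathcal{T}(0)+(\mathcal{T}(u)-\mathcal{T}(0))$ and the contractivity of the heat semigroup on $L^q$, together with its continuity in $t$ with values in $L^1\cap Y$; this last point is standard in $L^1\cap L^\infty$ and, for $\alpha\in(1,2)$, in $L^1\cap H^{\alpha-1,p}$ by applying it to $u_0$ and to $(I-\Delta)^{(\alpha-1)/2}u_0\in L^p$. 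So the entire issue is to estimate $N(u)-N(v)$.

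The basic mechanism is the smoothing bound \eqref{lin}: for a parameter $\theta\in[1,2)$ to be chosen and exponents $1\le r\le q\le\infty$,
\begin{equation*}
\bigl\|\nabla^{\theta}e^{\delta(t-s)\Delta}f\bigr\|_q\le C(t-s)^{-\frac{d}{2}(\frac1r-\frac1q)-\frac{\theta}{2}}\|f\|_r,
\end{equation*}
applied with $\theta=1$ (for $L^1$ and $L^\infty$) or with $\theta=\alpha$ combined with $(I-\Delta)^{(\alpha-1)/2}$ (for the $H^{\alpha-1,p}$ component). I would choose $r$, $q$ so that the exponent of $(t-s)$ is strictly greater than $-1$, thereby producing a factor $T^\gamma$ with $\gamma>0$ after integration. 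The remaining task is a product/composition bound of the form
\begin{equation*}
\|\Psi(u)-\Psi(v)\|_r \le C(R)\,\|u-v\|_X,\qquad u,v\in\overline{B}(0,R)\subset\mathcal{C}([0,T],X),
\end{equation*}
which I would derive by writing $\Psi(u)-\Psi(v)=(|u|-|v|)\nabla^{\alpha-1}G(u)+|v|\nabla^{\alpha-1}(G(u)-G(v))$, bounding $||u|-|v||\le|u-v|$ pointwise, and using that hypothesis~\eqref{hyp:G} on $G$ implies $|G(u)-G(v)|\le L(R)|u-v|$ and, via the mean value theorem, a $C^{1,1}_{\mathrm{loc}}$ composition estimate for $G(u)-G(v)$ in fractional Sobolev spaces.

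In the case $\alpha\in(0,1]$, the operator $\nabla^{\alpha-1}=\nabla\,\mathcal{I}_{2-\alpha}$ is a Riesz transform composed with a smoothing potential, hence bounded $L^s\to L^{s'}$ by \eqref{Riesz} for appropriate exponents (or $s=s'$ when $\alpha=1$), and the previous Hölder splitting plus interpolation between $L^1$ and $L^\infty$ yields the required estimate. In the case $\alpha\in(1,2)$, I would use the Sobolev embedding \eqref{Hbound} so that $u,v\in H^{\alpha-1,p}\hookrightarrow L^\infty$, which controls $\|G(u)\|_\infty$ and justifies the composition estimate in $H^{\alpha-1,p}$, then close the argument by bounding $\nabla^{\alpha-1}G(u)=\nabla^\alpha\mathcal{I}_1 G(u)$ appropriately. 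The main obstacle, and the place where the condition $p>p_\alpha$ is essential, is precisely this product/composition estimate in $H^{\alpha-1,p}$ for $\alpha>1$: the fractional gradient acts on the nonlinear composition $G(u)$ and then multiplies by $|u|$, so we must avoid any loss of derivatives, which forces a careful interplay between the Sobolev embedding, the local Lipschitz property of $G'$, and the $L^\infty$ control of $u$.
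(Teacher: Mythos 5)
Your proposal follows essentially the same route as the paper: the same splitting $\mathcal{T}(u)-\mathcal{T}(v)=\int_0^t\nabla e^{\delta(t-s)\Delta}\cdot(\Psi(u)-\Psi(v))\,ds$, the same decomposition of $\Psi(u)-\Psi(v)$ into two terms, the smoothing bound \eqref{lin} with exponents chosen to give an integrable singularity in $t-s$, the Riesz-potential bound \eqref{Riesz} for $\alpha\in(0,1]$, and for $\alpha\in(1,2)$ the Sobolev embedding \eqref{Hbound} plus a composition estimate in $H^{\alpha-1,p}$ exploiting the local Lipschitz regularity of $G'$. The one place you are slightly vague --- the "$C^{1,1}_{\mathrm{loc}}$ composition estimate in fractional Sobolev spaces" --- is carried out in the paper via the mean-value identity $G(u)-G(v)=K(u,v)(u-v)$ with $K(u,v)=\int_0^1 G'(\tau u+(1-\tau)v)\,d\tau$, the Moser product estimate, and the chain rule $\|H(u)\|_{H^{\alpha-1,p}}\le C\,|H'|(\|u\|_\infty)\,\|u\|_{H^{\alpha-1,p}}$ applied to $H=G'$, which is exactly the interplay you describe.
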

Indeed, once this lemma is proved, we first derive
\begin{equation}
\label{estim:linfty} 
\|\mathcal{T} (u)\|_{\mathcal{C}([0,T],X)} \le \|u_0\|_X + R C_1 (R) T^\gamma
\end{equation}
by choosing $v=0$ in \eqref{estim:lip}
and using estimate \eqref{lin}.
Now it is enough to choose $R = 2
\|u_0\|_X$ and $T>0$ such that $ C_1(R) T^\gamma \le \frac12$
in order to ensure that $\mathcal{T}$ maps $\overline{B}(0,R)$ into
itself, and is a contraction. \medskip

\noindent{\bf The case $\alpha \in (0,1]$.} \ 
In order to get estimate \eqref{estim:lip}, we first write
\begin{equation}\label{mapT}
{\mathcal T}(u)(t)-{\mathcal T}(v)(t)=\int_0^t \nabla {\rm e}^{\delta (t-s)\Delta}
\cdot\left( \Psi(u)-\Psi(v)\right)(s)\ds, 
\end{equation}
and  the difference of $\Psi$'s is represented as
\begin{equation}\label{differ}
\Psi(u)-\Psi(v)=(|u|-|v|)\nabla^{\alpha-1}G(u)+|v|\nabla^{\alpha-1}(G(u)-G(v)).
\end{equation}
\begin{lemma}
For every $\alpha \in (0,1]$ and  $p \in (1,\infty)$ 
there exists a constant $C(p,\alpha)>0$ such that 
for all  $u \in  L^\infty (\R^d)\cap L^p(\R^d)$  the following inequality
 \begin{equation}\label{Riesz2}
\|\nabla^{\alpha-1} (G(u)-G(v))\|_q\leq
C(p,\alpha)
\left(
\sup_{|z|\leq\|u\|_\infty+\|v\|_\infty}
 G'(z) 
\right)
\|u-v\|_p
\end{equation} 
holds true with $\frac1{q}=\frac1{p}-\frac{1-\alpha}{d}$. 
\end{lemma}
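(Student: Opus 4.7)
The strategy is to decompose $\nabla^{\alpha-1}$ as a Riesz transform composed with a Riesz potential and then combine the resulting $L^p\!\to\!L^q$ mapping properties with a pointwise Lipschitz estimate on $G$.

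First, since $G'$ is locally Lipschitz, it is in particular bounded on the interval $[-\|u\|_\infty-\|v\|_\infty,\|u\|_\infty+\|v\|_\infty]$. The mean value theorem then yields the pointwise bound
\[
|G(u(x))-G(v(x))|\le \Bigl(\sup_{|z|\le \|u\|_\infty+\|v\|_\infty} G'(z)\Bigr)\,|u(x)-v(x)|
\]
for a.e.\ $x$, so that
\[
\|G(u)-G(v)\|_p\le \Bigl(\sup_{|z|\le \|u\|_\infty+\|v\|_\infty} G'(z)\Bigr)\,\|u-v\|_p.
\]

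Second, the Fourier symbol $i\xi|\xi|^{\alpha-2}$ of $\nabla^{\alpha-1}$ factors componentwise as $(i\xi_j/|\xi|)\cdot |\xi|^{-(1-\alpha)}$, so each component of $\nabla^{\alpha-1}$ equals $R_j\,\mathcal{I}_{1-\alpha}$, where $R_j$ denotes the $j$-th classical Riesz transform. For $\alpha\in(0,1)$ one has $1-\alpha\in(0,1)\subset(0,d)$, and the Hardy--Littlewood--Sobolev inequality \eqref{Riesz} gives
\[
\|\mathcal{I}_{1-\alpha}(G(u)-G(v))\|_q\le C(p,\alpha)\,\|G(u)-G(v)\|_p,\qquad \tfrac1q=\tfrac1p-\tfrac{1-\alpha}{d}.
\]
Combining this with the standard Calder\'on--Zygmund $L^q$-boundedness of the Riesz transforms (valid for $1<q<\infty$) and the pointwise Lipschitz bound on $G$ above yields the claim. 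For the endpoint $\alpha=1$, one has $\nabla^0=R$ and $q=p$, so the inequality reduces at once to the $L^p$-continuity of the Riesz transforms for $p\in(1,\infty)$.

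No genuine obstacle arises; the only subtle point is the admissible range of exponents. Since the argument requires $q\in(1,\infty)$, in the case $\alpha<1$ the statement implicitly restricts $p$ to $1<p<d/(1-\alpha)$, which guarantees that $q=dp/(d-(1-\alpha)p)$ is finite and strictly greater than $1$; within this natural range the proof is a direct composition of Calder\'on--Zygmund and Hardy--Littlewood--Sobolev estimates.
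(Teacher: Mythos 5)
Your proof follows essentially the same route as the paper: factor $\nabla^{\alpha-1}$ as Riesz transforms composed with $\mathcal{I}_{1-\alpha}$, apply the $L^p$-boundedness of the Riesz transforms and the Hardy--Littlewood--Sobolev estimate \eqref{Riesz}, and finish with the Lipschitz bound on $G$. The only additions beyond what the paper writes are the explicit invocation of the mean value theorem for the Lipschitz step and the (correct) observation that for $\alpha<1$ the relation $\frac1q=\frac1p-\frac{1-\alpha}{d}$ implicitly restricts $p$ to the range where $q\in(1,\infty)$.
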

\begin{proof}
  Since, for $\alpha \in (0,1]$, we have
  $\nabla^{\alpha-1}= \nabla^0{\mathcal I}_{1-\alpha}$, where the
  components of $\nabla^0$ are the Riesz transforms (which are bounded
  operators on $L^p(\R^d)$ for each $p\in(1,\infty)$), we 
  obtain \eqref{Riesz2} from estimate \eqref{Riesz} as follows
\[ \|\nabla^{\alpha-1} (G(u)-G(v))\|_q \le \|\mathcal{I}_{1-\alpha} (G(u)-G(v))\|_q \le C(p,\alpha)\| G(u)-G(v)\|_p \le C \|u-v\|_p.\]
\end{proof}

Now, we come back to the proof of \eqref{estim:lip} with
$X=L^1(\R^d)\cap L^\infty(\R^d)$, First, for all $u,v \in
\overline{B}(0,R)\subset X$ and some $q \in (1,\infty)$,
\begin{align}
\|\Psi(u)-\Psi(v)\|_1&\le
\nonumber C \|u-v\|_{q^\ast}\|\nabla^{\alpha-1}G(u)\|_q+
\nonumber C \|v\|_{q^\ast}\|\nabla^{\alpha-1}(G(u)-G(v))\|_q\\  
\label{estim:key1} &\le C\|u-v\|_{q^\ast} G' (\|u\|_\infty) \|u\|_p 
+C \|v\|_{q^\ast}G' (\|u\|_\infty+\|v\|_\infty) \|u-v\|_p\\
\nonumber  &\le C_0 (R) \|u-v\|_X
\end{align}
with $C_0(R)=C(\alpha,d,q,RG' (2R))$ and $\frac1q+\frac1{q^\ast}=1$
and $\frac1q=\frac1p-\frac{1-\alpha}{d}$. We used estimate
\eqref{Riesz2} twice to get the second line in \eqref{estim:key1}, and
the inequality $\|u\|_r \le \|u\|_X$ which is valid for all $r \in
[1,\infty]$ to obtain the last one.

The estimate of the second norm in $X$  is obtained similarly:
for all $u,v \in \overline{B}(0,R) \subset X$ and   some  $q\in(1,\infty)$,
\begin{equation}\label{estim:key2}
 \|\Psi(u)-\Psi(v)\|_q  \le  \|u-v\|_\infty \|\nabla^{\alpha-1} G
(u)\|_q +  \|v\|_\infty \|\nabla^{\alpha-1}(G (u)-G (v))\|_q 
  \le C_0(R)  \|u-v\|_X.
\end{equation}
Now, we apply inequalities \eqref{lin} with $\beta=1$, $\langle
p,r\rangle =\langle 1,1\rangle$ and $\langle
p,r\rangle=\langle\infty,q\rangle$, respectively, and the estimates \eqref{estim:key1}, \eqref{estim:key2} yield
\begin{align}
\label{estim:key1+}
\|\mathcal{T} (u) - \mathcal{T}(v)\|_{\mathcal{C}([0,T],L^1(\R^d))}
\le C C_0 (R) T^{\frac12}  \|u-v\|_{\mathcal{C}[0,T],X)}, \\
\label{estim:key2+}
\|\mathcal{T} (u) - \mathcal{T}(v)\|_{\mathcal{C}([0,T],L^\infty(\R^d))}
\le C C_0 (R) T^{\frac12-\frac{d}2 \frac1{q^\ast}}  \|u-v\|_{\mathcal{C}[0,T],X)}.
\end{align}
 Combining \eqref{mapT}, \eqref{differ}, \eqref{estim:key1+} and
 \eqref{estim:key2+}, we thus get \eqref{estim:lip} for
 $\alpha\in(0,1]$, with $\gamma =
   \frac12-\frac{d}{2}\frac{1}{q^\ast}$, now with a new constant $C_1
   (R) = C (\alpha,d,q^\ast,RG' (2R))$, where we have chosen
   $q^\ast>d$ to ensure $\frac{d}{2}\frac{1}{q^\ast}<\frac12$.

As far as the continuity of $\mathcal{T}(u)$ with respect to time is
concerned, it is enough to study 
\[ \mathcal{S}(u) = \int_0^t \nabla {\rm e}^{\delta(t-s)\Delta} \cdot \Psi (u(s)) \ds.\]
We fix $t \in [0,T]$ and write for $h$ small enough (and positive if
$t=0$, negative if $t=T$), 
\[\mathcal{S} (u(t+h))- \mathcal{S}(u(t)) = \int_t^{t+h} \nabla
{\rm e}^{\delta(t+h-s)\Delta} \cdot \Psi (u(s)) \ds  + 
\int_0^t \nabla{\rm  e}^{\delta(t-s)\Delta} \cdot ({\rm e}^{\delta h \Delta}
   \Psi (u(s)) - \Psi (u(s))) \ds.
\]
As above, use   two key estimates 
\eqref{estim:key1} and \eqref{estim:key1}
together with \eqref{lin}
(and the dominated convergence theorem) to conclude the proof of Proposition \ref{prop:loc-time-ex} for $\alpha\in(0,1]$. 
\medskip

\noindent{\bf The case $\alpha \in (1,2)$.} \ We argue as before,
using  \eqref{mapT} and \eqref{differ}. We need now to
state and to prove the corresponding key technical lemma.
\begin{lemma}
For $\alpha \in (1,2)$, $p>\frac{d}{\alpha-1}$, and $u \in H^{\alpha-1,p} (\R^d)$,
\begin{equation}\label{Riesz3}
\|G(u)-G(v)\|_{H^{\alpha-1,p}} \le
C_2(\|u\|_{H^{\alpha-1,p}}+\|v\|_{H^{\alpha-1,p}}) \|u -v\|_{H^{\alpha-1,p}},
\end{equation}
where $C_2(\|u\|_{H^{\alpha-1,p}}+\|v\|_{H^{\alpha-1,p}})$ depends on
$\|u\|_{H^{\alpha-1,p}}$,  $\|v\|_{H^{\alpha-1,p}}$ and on the
$W^{1,\infty}$-norm of $G'$ in the interval $[0,\|u\|_\infty+\|v\|_\infty]$. 
\end{lemma}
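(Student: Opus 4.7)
The key input is the Sobolev embedding $H^{\alpha-1,p}(\R^d)\hookrightarrow L^\infty(\R^d)$ recorded in \eqref{Hbound}, which holds precisely because $p>p_\alpha=d/(\alpha-1)$. Setting $M:=\|u\|_\infty+\|v\|_\infty\le C(\|u\|_{H^{\alpha-1,p}}+\|v\|_{H^{\alpha-1,p}})$, the local Lipschitz hypothesis on $G'$ in \eqref{hyp:G} gives a finite Lipschitz constant $L$ for $G'$ on $[-M,M]$, and a uniform bound $K:=\|G'\|_{L^\infty[-M,M]}$, both of which are controlled by the $W^{1,\infty}([-M,M])$-norm of $G'$. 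The $L^p$ part of the left-hand side of \eqref{Riesz3} is immediate from the mean value theorem:
\[
\|G(u)-G(v)\|_p\le K\,\|u-v\|_p\le K\,\|u-v\|_{H^{\alpha-1,p}}.
\]

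For the fractional part, I would factor $G(u)-G(v)=(u-v)\,H(u,v)$ with
\[H(u,v):=\int_0^1 G'\bigl(v+\tau(u-v)\bigr)\,\mathrm{d}\tau,\]
and apply a Kato--Ponce type fractional Leibniz inequality (valid for the index $\alpha-1\in(0,1)$ on $L^p$, $p\in(1,\infty)$) to obtain
\[
\|\nabla^{\alpha-1}\bigl((u-v)H\bigr)\|_p\le C\Bigl(\|u-v\|_\infty\,\|\nabla^{\alpha-1}H\|_p+\|H\|_\infty\,\|\nabla^{\alpha-1}(u-v)\|_p\Bigr).
\]
The factors $\|u-v\|_\infty$ and $\|\nabla^{\alpha-1}(u-v)\|_p$ are both bounded by $\|u-v\|_{H^{\alpha-1,p}}$ (the first by \eqref{Hbound}, the second by definition of the norm), while $\|H\|_\infty\le K$. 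What remains is to control $\|\nabla^{\alpha-1}H\|_p$, which by Minkowski in $\tau$ reduces to bounding $\|\nabla^{\alpha-1}G'(w)\|_p$ for $w=v+\tau(u-v)$; by the triangle inequality $\|\nabla^{\alpha-1}w\|_p\le \|u\|_{H^{\alpha-1,p}}+\|v\|_{H^{\alpha-1,p}}$, so a fractional Moser-type chain-rule estimate
\[
\|\nabla^{\alpha-1}G'(w)\|_p\le C\,L\,\|\nabla^{\alpha-1}w\|_p
\]
will finish the job.

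The Moser estimate above is the main technical obstacle, and is where the hypothesis that $G'$ (not merely $G$) be Lipschitz gets used. Its proof relies on the singular-integral representation \eqref{frac:int} of $\nabla^{\alpha-1}$ together with the pointwise inequality $|G'(w(x))-G'(w(y))|\le L|w(x)-w(y)|$, combined with a Littlewood--Paley characterization (or equivalent Gagliardo-type seminorm) of $H^{\alpha-1,p}$ to pass from the pointwise bound on differences to the $L^p$ bound on $\nabla^{\alpha-1}G'(w)$. Assembling the three pieces produces \eqref{Riesz3} with the advertised constant $C_2$ depending only on $\|u\|_{H^{\alpha-1,p}}+\|v\|_{H^{\alpha-1,p}}$ and on $\|G'\|_{W^{1,\infty}([-M,M])}$. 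A secondary, routine point is that the Kato--Ponce inequality and the fractional chain rule are established via paradifferential calculus; the constants they produce are all uniform in $u,v$ sitting in a fixed ball of $H^{\alpha-1,p}$, which is all that is needed to feed \eqref{Riesz3} into the contraction argument of Lemma~\ref{lem:invariance+contrac}.
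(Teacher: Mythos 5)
Your proof is correct and follows essentially the same route as the paper's: the same factorization $G(u)-G(v)=K(u,v)(u-v)$ with $K=\int_0^1 G'(\tau u+(1-\tau)v)\,{\rm d}\tau$, the same Moser/Kato--Ponce product estimate in $H^{\alpha-1,p}$, the same Minkowski-in-$\tau$ reduction to a single fractional chain-rule bound $\|G'(w)\|_{H^{\alpha-1,p}}\le C|G''|(\|w\|_\infty)\|w\|_{H^{\alpha-1,p}}$, and the same use of the embedding \eqref{Hbound}. The only divergence is that where you sketch proving the chain rule from the singular-integral representation and a Littlewood--Paley characterization, the paper simply cites \cite[Ch.~2, Prop.~4.1]{Taylor-tools}.
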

\begin{proof}
We use the classical identity 
\begin{equation}\label{GGK}
  G(u)-G(v)=K(u,v)(u-v) \qquad 
\text{with}\quad K(u,v)=\int_0^1 G'(\tau u+(1-\tau)v) \,{\rm d}\tau,
\end{equation} 
 and the Moser estimate for the product of two functions in
 $H^{\alpha-1,p}(\R^d)$, see, {\it e.g.}, \cite[Ch. 2,
   ineq. (0.22)]{Taylor-tools}, to obtain the following inequality
\begin{equation}\label{non}
\|G(u)-G(v)\|_{H^{\alpha-1,p}} \le
\|u-v\|_\infty\|K(u,v)\|_{H^{\alpha-1,p}} +
\|u-v\|_{H^{\alpha-1,p}}\|K(u,v)\|_\infty.
\end{equation}
Moreover, we recall \cite[Ch. 2, Prop. 4.1]{Taylor-tools} that for
every increasing locally Lipschitz function $H$ we have
\[\|H(u)\|_{H^{\alpha-1,p}}\le C|H'|(\|u\|_\infty)\|u\|_{H^{\alpha-1,p}}\]
for every $p\in(1,\infty)$ and $\alpha-1\in(0,1)$. Choosing $H=G'$, we
deduce that
\begin{align}
\nonumber
\|K(u,v)\|_{H^{\alpha-1,p}} &\le \int_0^1 \|G'(\tau u + (1-\tau)v)
\|_{H^{\alpha-1,p}} \,{\rm d} \tau\\ 
\label{non2} & \le C |G''|
(\|u\|_\infty+\|v\|_\infty) ( \| u\|_{H^{\alpha-1,p}} + \|v
\|_{H^{\alpha-1,p}} ).
\end{align}
Moreover, we have the trivial estimate
\begin{equation}\label{non3}
 \|K(u,v)\|_\infty \le |G'| (\|u\|_\infty + \|v\|_\infty).
\end{equation}
Combining \eqref{non}, \eqref{non2}, \eqref{non3} and \eqref{Hbound}, we finally complete the proof of inequality \eqref{Riesz3}. 
\end{proof}
Now, we are in a position to obtain the estimate of the first component of the norm of $X$. From \eqref{differ}, we get
\begin{equation}
\label{key1} \|\Psi (u) - \Psi (v) \|_1   \le C_2(R)R \|u-v\|_{p'}   +
C_2 (2R) \|v\|_{p'}  \|u-v\|_X  \le C_3(R) \|u-v\|_X
\end{equation}
with $C_3(R) = CR( C_2(R) +  C_2 (2R))$.  We used inequality \eqref{Riesz3}
twice, as well as the fact that $\|u\|_r \le C \|u\|_X$ for all $r \in [1,\infty]$ and $u \in X$.
From \eqref{key1} and \eqref{lin} with $\beta =1$ and $\langle
p,r\rangle = \langle 1,1\rangle$, we  get inequality \eqref{estim:key1+} where
$C_0(R)$ is replaced with $\tilde{C}_0(R)=C_3(R)$. 

The  estimate of the $H^{\alpha-1,p}$-norm  is  obtained analogously. 
First, we have 
\begin{equation}
\label{key2} \|\Psi(u)-\Psi(v)\|_p   \le  \|u-v\|_\infty \|\nabla^{\alpha-1} G
(u)\|_p +  \|v\|_\infty \|\nabla^{\alpha-1}(G (u)-G (v))\|_p\\
   \le C_3(R)  \|u-v\|_X.
\end{equation}
From inequalities \eqref{key2} and \eqref{lin} with $\beta =1$ and $\beta=\alpha$ and $\langle
p,r\rangle = \langle p,p\rangle$, we  get
\begin{equation}\label{key2+}
\|\mathcal{T} (u) - \mathcal{T}(v)\|_{\mathcal{C}([0,T],H^{\alpha-1,p}(\R^d))}
\le C C_3 (R) (T^{\frac12}+ T^{\frac12-\frac\alpha2})  \|u-v\|_{\mathcal{C}[0,T],X)}.
\end{equation}
Finally, combining \eqref{estim:key1+} and \eqref{key2+}, we complete
the proof of \eqref{estim:lip} with $\gamma = \frac12-\frac\alpha2$
and with some $\tilde{C}_1 (R)$.

The time continuity of $\mathcal{T}(u)$ is proved as in the case
$\alpha\in(0,1]$, and this achieves the proof of
  Proposition~\ref{prop:loc-time-ex}.
\end{proof}

\subsection*{Regularity of the solutions}

\begin{corollary}[Regularity of the solutions] \label{cor:reg}
Consider $u_0 \in L^1 (\R^d) \cap L^\infty(\R^d)$ if $\alpha \in
(0,1]$ {\rm (}as before{\rm )}, and $u_0 \in L^1(\R^d) \cap
  \left(\cap_{p > p_\alpha} H^{\alpha-1,p} (\R^d) \right)$ if $\alpha
  \in (1,2)$ {\rm (}a strengthened assumption{\rm )}. Then the
  solution constructed in Proposition~\ref{prop:loc-time-ex} enjoys
  the following regularity
\begin{equation}\label{reg-sol}
u\in  {\mathcal
    C}^1\left((0,T), L^p(\R^d)\right) \cap {\mathcal
    C}\left((0,T), H^{1,p}(\R^d)\right)
\end{equation}
for every $p\in (\bar{p}_\alpha,\infty)$ with 
\[\bar{p}_\alpha=\begin{cases} 
\frac{d}{d-(1-\alpha)} & \text{ if } \alpha \in (0,1],\\
\frac{d}{\alpha-1} & \text{ if } \alpha \in (1,2).
\end{cases}\]  
In particular, $u=u(t,x)$ is a weak solution
  of equation \eqref{reg}, i.e.
\[ \partial_t u =\delta \Delta u+ \nabla \cdot \Psi(u) \qquad {\rm with}\ \ \Psi(u)=|u|\nabla^{\alpha-1}G(u) \]
in $(0,T) \times \R^d$ in the sense of distributions {\rm
  (}cf. equation \eqref{distrib:reg}{\rm )}, and
\begin{equation}\label{mass:cons}
\int u(t,x) \dx = \int u_0 (x) \dx 
\end{equation}
for all $t \in (0,T)$. 
\end{corollary}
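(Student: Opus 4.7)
The plan is to bootstrap the mild solution $u\in\mathcal C([0,T],X)$ of Proposition~\ref{prop:loc-time-ex} into the regularity class~\eqref{reg-sol} by iterating the Duhamel formula~\eqref{Duh} with the heat-semigroup smoothing~\eqref{lin}, and then to derive both the pointwise validity of~\eqref{reg} and the mass identity~\eqref{mass:cons}.

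Fix $p>\bar p_\alpha$. The constructive inequalities inside Lemma~\ref{lem:invariance+contrac}, together with the embeddings $X\hookrightarrow L^r(\R^d)$ valid for every $r\in[1,\infty]$ (direct if $\alpha\in(0,1]$, via~\eqref{Hbound} if $\alpha\in(1,2)$), yield a uniform bound $\sup_{s\in[0,T]}\|\Psi(u(s))\|_p\le C$. For $t\in(0,T]$ and any $t_0\in(0,t)$, I shift~\eqref{Duh} to
\[
u(t)=e^{\delta(t-t_0)\Delta}u(t_0)+\int_{t_0}^{t}\nabla e^{\delta(t-s)\Delta}\cdot\Psi(u(s))\,\ds,
\]
and apply $\nabla^\sigma$ with $\sigma<1$: the first term is controlled in $L^p$ by $C(t-t_0)^{-\sigma/2}\|u(t_0)\|_p$, while the integrand of the second is bounded by $C(t-s)^{-(1+\sigma)/2}\|\Psi(u(s))\|_p$, which is time-integrable since $\sigma<1$. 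This already yields $u\in\mathcal C((0,T),H^{\sigma,p}(\R^d))$ for every $\sigma<1$. Passing from $\sigma<1$ to $\sigma=1$ requires either exploiting the divergence form of $\nabla\cdot\Psi(u)$ via parabolic $H^{-1,p}$-maximal regularity for the heat equation, or iterating the fractional gain after observing that product-type bounds in the spirit of~\eqref{estim:key1}--\eqref{key2} promote $u\in H^{\sigma,p}$ to $\Psi(u)\in H^{\sigma-(1-\alpha)_+,p}$. Either way one concludes $u\in\mathcal C((0,T),H^{1,p}(\R^d))$.

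A further iteration of the same smoothing mechanism on shrunken subintervals produces $u\in\mathcal C((0,T),H^{2,p})$, so that the right-hand side $\delta\Delta u+\nabla\cdot\Psi(u)$ of~\eqref{reg} belongs to $\mathcal C((0,T),L^p)$. Differentiating~\eqref{Duh} in time and using the $L^p$-continuity of $s\mapsto\Psi(u(s))$ (which follows from the product-type bounds applied to time-differences of $u$) then yields $\partial_t u\in\mathcal C((0,T),L^p)$, which together with the $\mathcal C((0,T),H^{1,p})$ regularity gives~\eqref{reg-sol}. In particular, \eqref{reg} holds as a pointwise identity in $L^p$ on $(0,T)$, hence as a distribution, which is precisely~\eqref{distrib:reg}.

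For the mass identity~\eqref{mass:cons}, I would substitute $\varphi(t,x)=\psi(t)\chi(x/R)$ into~\eqref{distrib:reg}, with $\chi\in\mathcal C^\infty_c(\R^d)$, $\chi\equiv 1$ on $B_1$, and $\psi\in\mathcal C^\infty_c((0,T))$ approximating $\mathbf 1_{(t_1,t_2)}$ for $0<t_1<t_2<T$. Passing to the limit in $\psi$ (justified by the continuity of $u$ in $L^1$) and then sending $R\to\infty$ annihilates the flux contribution, because $\|\nabla_x\chi(\cdot/R)\|_\infty\le C/R$ while $\delta\nabla u,\Psi(u)\in L^1_{\rm loc}((0,T),L^1(\R^d))$, the latter obtained by interpolating the uniform $L^1$-bound on $u$ against the $L^p$-bound on $\nabla^{\alpha-1}G(u)$. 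The principal obstacle is the bootstrap from fractional $H^{\sigma,p}$ with $\sigma<1$ to integer $H^{1,p}$-regularity, because the endpoint behaviour $\|\nabla^2 e^{\delta\tau\Delta}\|_{L^p\to L^p}\sim\tau^{-1}$ fails to be integrable in time and rules out a direct termwise Duhamel estimate; this is why the divergence structure of the nonlinear flux must be brought into play.
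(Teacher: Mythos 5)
Your proposal rests on direct Duhamel estimates with fractional smoothing $\nabla^\sigma$ (plus the shifted representation on $(t_0,t)$), whereas the paper works with $L^q_t L^p_x$-maximal regularity and the decomposition $\nabla\cdot\Psi(u)=f_1+f_2$. These are not the same mechanism, and your version has a gap that you in fact flag yourself at the end but do not close: the step from $H^{\sigma,p}$, $\sigma<1$, to $H^{1,p}$ is asserted ("Either way one concludes $u\in\mathcal C((0,T),H^{1,p})$") without actually executing either of the two routes you name.

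More concretely, the loss-of-regularity formula "$u\in H^{\sigma,p}\Rightarrow\Psi(u)\in H^{\sigma-(1-\alpha)_+,p}$" is wrong when $\alpha\in(1,2)$: there $\nabla^{\alpha-1}$ is an operator of positive order $\alpha-1$, so $\nabla^{\alpha-1}G(u)$ loses $\alpha-1$ derivatives, not zero. The correct gain per Duhamel iteration in that regime is $2-\alpha$ (up to $\eps$), which is why the paper iterates $u\in L^q H^{\beta,p}\Rightarrow u\in L^q H^{\beta+(2-\alpha),p}$ and stops at the least $\beta_k=1+k(2-\alpha)>\alpha$ (note $\beta_k<2$), never claiming $u\in H^{2,p}$. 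Your subsequent claim that "a further iteration... produces $u\in\mathcal C((0,T),H^{2,p})$" is therefore unjustified and, for $\alpha$ close to $2$, not achievable by this bootstrap given only $G'$ locally Lipschitz; the paper avoids it altogether by going through $\partial_tu\in L^q_tL^p_x$ from maximal regularity, upgrading to $\mathcal C^{0,\beta}$-in-time via the integral formula $u(t)-u(s)=\int_s^t\partial_tu$, and only then invoking the linear parabolic theory (Pazy) to get $\partial_tu\in\mathcal C^{0,\beta}((0,T),L^p)$ and $u\in\mathcal C^{0,\beta}((0,T),H^{1,p})$. You would need to either adopt this maximal-regularity route explicitly (including the decomposition $f_1=\sgn u\,\nabla u\cdot\nabla^{\alpha-1}G(u)$, $f_2=|u|\nabla^{\alpha-1}(G'(u)\nabla u)$ and its $\alpha\in(0,1]$ analogue) or supply the missing bookkeeping in your Duhamel iteration with the corrected loss of $\alpha-1$ derivatives. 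Your truncation argument for the mass identity is fine and is more detailed than the paper's one-line appeal to standard facts.
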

\begin{proof}
If $\alpha\in (0,1]$, for every $u\in {\mathcal C}([0,T],X)$, 
where the space $X$ is defined in \eqref{Xspace}, we obviously have 
$u \in
  L^\infty ((0,T),L^\infty(\R^d))$. We derive from inequality 
\eqref{Riesz} that
  for all $p\in (\bar{p}_\alpha,\infty)$, we have $\nabla^{\alpha-1}
  G(u) \in L^\infty ((0,T),L^p(\R^d))$ with $\bar{p}_\alpha=
  \frac{d}{d-(1-\alpha)}>1$.  This implies
\[ \nabla \cdot \Psi(u) \in L^q\left((0,T),H^{-1,p}(\R^d)\right)\]
for all $q \in (1,\infty)$ and $p \in (\bar{p}_\alpha,\infty)$. 
 Thus, the maximal regularity
of mild solutions for the nonhomogeneous heat equation \cite{LSU}
gives us $\nabla u \in L^q((0,T),L^p(\R^d))$ for every $q \in
(1,\infty)$ and $p \in (\bar{p}_\alpha,\infty)$.  Consequently,
$\nabla \cdot \left(|u|\nabla^{\alpha-1}G(u)\right)=f_1+f_2$ with 
\begin{equation}\label{cont:grad}
f_1 =\nabla |u|\cdot
\nabla^{\alpha-1}G(u), \qquad f_2 = -|u| (-\Delta)^{\frac\alpha2} G(u). 
\end{equation}
First, we remark that $f_1 \in L^q((0,T),L^p(\R^d))$ for every $q \in
(1,\infty)$ and $p \in (\bar{p}_\alpha,\infty)$.  Second, we notice
that $G(u) \in L^q((0,T),H^{1,p}(\R^d))$, hence $
(-\Delta)^{\frac\alpha2} G(u) \in L^q((0,T),H^{1-\alpha,p}(\R^d))
\subset L^q((0,T),L^p(\R^d))$. Hence, we also have $f_2 \in
L^q((0,T),L^p(\R^d))$ for every $q \in (1,\infty)$ and $p \in
(\bar{p}_\alpha,\infty)$.  Using again the maximal regularity result,
we obtain that
\[\partial_t u \in L^q((0,T),L^p(\R^d))\]
for every $q \in (1,\infty)$ and $p \in (\bar{p}_\alpha,\infty)$.  Thus, 
using  the following representation in
$L^p(\R^d)$ for all $0<s<t$
\[u(t)-u(s)=\int_s^t \partial_tu(\tau)\, {\rm d}\tau,\] 
and the  H\"older inequality we obtain  for every $p \in(\bar{p}_\alpha,\infty)$ 
\[\|u(t)-u(s)\|_p\le \int_s^t\|\partial_tu(\tau)\|_p \dta \le
\left(\int_s^t\|\partial_tu(\tau)\|^q_p\dta\right)^{\frac1q}\ (t-s)^{\frac{1}{q^\ast}},\]
\textit{i.e.} $u \in \mathcal{C}^{0,\beta}((0,T),L^p (\R^d))$ for all
$p \in (\bar{p}_\alpha,\infty)$ and $\beta \in (0,1)$. Using inequality \eqref{Riesz2}, this estimate 
implies that $\Psi (u) \in \mathcal{C}^{0,\beta}((0,T),L^p (\R^d))$
for every $p \in (\bar{p}_\alpha,\infty)$.  Now, the classical theory of linear
parabolic equations, see, {\it e.g.}, \cite[Ch. 4, Theorem 3.5]{Pazy},
implies that $\partial_t u \in \mathcal{C}^{0,\beta} ((0,T),L^p
(\R^d))$ and $u \in \mathcal{C}^{0,\beta} ((0,T),H^{1,p} (\R^d))$
which is the desired regularity result.
\medskip

If $\alpha \in (1,2)$, we have $u \in L^q ((0,T),H^{\alpha-1,p}(\R^d))$
for all  $q \in
(1,\infty)$ and $p \in (p_\alpha,\infty)$, where
$p_\alpha=\bar{p}_\alpha=d/(\alpha-1)$.
 Equations~\eqref{Hbound} and \eqref{Riesz3} imply that
\begin{equation}\label{eq:reg1}  \nabla^{\alpha-1} G(u) \in
  L^q((0,T),L^p (\R^d)) 
\end{equation}
for all $p \in (p_\alpha,\infty)$ and $q \in (1,\infty)$.  In
particular, $\Psi (u) = |u| \nabla^{\alpha-1} G(u) \in L^q((0,T),L^p
(\R^d))$.  Hence, the maximal regularity gives us
\begin{equation}\label{eq:reg2}
 u \in L^q((0,T),H^{1,p} (\R^d))
\end{equation}
for all $p \in (p_\alpha,\infty)$ and $q \in (1,\infty)$. We now write
once again $\nabla \cdot \Psi (u) = f_1+ f_2$ with
\[f_1 = \sgn u\, \nabla u \cdot \nabla^{\alpha-1} G(u), \ \ \ 
f_2 = |u| \nabla^{\alpha-1} (G'(u)\nabla u).\]
In view of \eqref{eq:reg1} and \eqref{eq:reg2}, we have 
\[f_1 \in L^q ((0,T),L^p (\R^d)).\] 
We now claim that 
\[ f_2 \in L^q((0,T),H^{-1+(2-\alpha),p} (\R^d)). \]
Indeed, 
\[ G'(u) \nabla u \in L^q ((0,T),L^p(\R^d)). \]
This implies that 
\[ \nabla^{\alpha-1} (G'(u) \nabla u) \in L^q
((0,T),H^{-1+(2-\alpha),p}(\R^d))\]
which, in turn, implies the claim. Hence, 
\[\nabla \cdot \Psi (u) \in L^q ((0,T),H^{-1+(2-\alpha),p}(\R^d)) \]
for all $p \in (p_\alpha,\infty)$ and $q \in
(1,\infty)$.  Then, the maximal regularity  implies that
\[ u \in L^q ((0,T),H^{1+(2-\alpha),p}(\R^d))\]
for all for all $p \in (p_\alpha,\infty)$ and $q \in (1,\infty)$, thus
we see that the space regularity of $u$ is improved. More generally,
the same argument shows that if
\[u \in L^q ((0,T),H^{\beta,p}(\R^d)), \quad \text{ with } \beta \le
\alpha, \] then
\[u \in L^q ((0,T),H^{\beta+(2-\alpha),p}(\R^d)).\]
Now choose the least integer $k \ge 1$ such that
$\beta_k=1+k(2-\alpha) > \alpha$, and notice that $\beta_k<2$. Then
\[f_2 \in L^q((0,T),H^{\beta_k-\alpha,p}(\R^d)) \subset L^q((0,T),L^p(\R^d)).\]
Then the maximal regularity implies that $ \partial_t u \in
L^q((0,T),L^p(\R^d))$, which implies, as was in the case $\alpha \in
(0,1]$, that $u \in \mathcal{C}^{0,\beta} ((0,T),L^p (\R^d))$ for all
  $p \in (1,\infty)$ and $\beta \in (0,1)$. The previous reasoning in
  spaces of the form $L^q((0,T),Y)$ extends readily to spaces of the
  form $\mathcal{C}^{0,\beta}((0,T),Y)$. This yields the desired
  regularity result in the case $\alpha \in (1,2)$.

Moreover, it is known by \cite[Ch.~4,~Theorem~3.2]{Pazy} that mild
solutions of the equation $ \partial_t u = \delta \Delta u + \nabla
\cdot f $ with $f \in \mathcal{C}^{0,\beta} ((0,T),L^p (\R^d))$ are in
fact weak solutions, \textit{i.e.} they satisfy the equation in the
sense of distributions.  Under these regularity properties, the proof
of the mass conservation property \eqref{mass:cons} is completely
standard.
\end{proof}

\subsection*{Convexity inequalities}

First, we show a simple but useful technical result involving monotone functions and the fractional Laplacian.
\begin{lemma}\label{lem:hg}
Let $\alpha\in (0,2]$. Assume that $g,\, h\in \mathcal{C}^1[0,\infty)$ are strictly increasing functions. 
Then, for every nonnegative $v\in \mathcal{C}^\infty_c(\R^d)$ we have
\[\int h(v) (-\Delta )^{\frac{\alpha}{2}}g(v) \dx\ge 0.\]
\end{lemma}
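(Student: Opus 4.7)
The plan is to use the singular-integral (Riesz--Bochner) representation of the fractional Laplacian together with a standard symmetrization trick, after which the monotonicity of $g$ and $h$ makes the integrand pointwise nonnegative.

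First I would treat the case $\alpha\in(0,2)$. Since $v\in\mathcal{C}^\infty_c(\R^d)$, the functions $g(v)$ and $h(v)$ are smooth and bounded, equal to the constants $g(0)$ and $h(0)$ outside $\operatorname{supp}v$. Using that the fractional Laplacian annihilates constants, one has for all $x\in\R^d$ the pointwise formula
\[
(-\Delta)^{\frac{\alpha}{2}}g(v)(x) = c_{d,\alpha}\,\mathrm{P.V.}\!\int \frac{g(v(x))-g(v(y))}{|x-y|^{d+\alpha}}\,\dy,
\]
with a positive constant $c_{d,\alpha}$. Multiplying by $h(v(x))$ and integrating in $x$, I would observe that the double integrand is absolutely integrable: near the diagonal $g(v(x))-g(v(y))=O(|x-y|)$ because $g\circ v$ is Lipschitz, while away from the diagonal the factor $h(v(x))-h(0)$ (obtained after subtracting a constant, legitimate since $\int(-\Delta)^{\alpha/2}g(v)\,dx=0$) provides compact support in $x$ and polynomial decay in $y$.

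Once absolute integrability is in hand, I would apply Fubini and relabel $x\leftrightarrow y$ to produce the symmetrized identity
\[
\int h(v)(-\Delta)^{\frac{\alpha}{2}}g(v)\,\dx
=\frac{c_{d,\alpha}}{2}\iint \frac{\bigl(h(v(x))-h(v(y))\bigr)\bigl(g(v(x))-g(v(y))\bigr)}{|x-y|^{d+\alpha}}\,\dx\,\dy.
\]
Because $g$ and $h$ are both strictly increasing on $[0,\infty)$ and $v\ge 0$, the product $(h(a)-h(b))(g(a)-g(b))\ge 0$ for all $a,b\ge 0$. Together with $|x-y|^{-(d+\alpha)}>0$, this makes the integrand pointwise nonnegative, and the conclusion follows.

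For the endpoint $\alpha=2$, the singular-integral representation is not available, but integration by parts gives directly
\[
\int h(v)(-\Delta)g(v)\,\dx=\int \nabla h(v)\cdot\nabla g(v)\,\dx=\int h'(v)g'(v)|\nabla v|^2\,\dx\ge 0,
\]
since $g',h'>0$ on $[0,\infty)$. The main (and only nontrivial) obstacle is the justification of Fubini and the passage from the principal value to the absolutely convergent double integral in the symmetrization; I would handle this by the usual device of subtracting the constants $g(0),h(0)$ (which does not alter either side) so that $g(v)-g(0)$ and $h(v)-h(0)$ are compactly supported, and then controlling the near-diagonal contribution through the Lipschitz bound on $g\circ v$ and $h\circ v$.
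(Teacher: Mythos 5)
Your argument is correct, but it takes a genuinely different route from the paper. The paper reduces the problem, via the substitution $w=g(v)$, to the observation that $f(s)=\int_0^s h(g^{-1}(\tau))\,{\rm d}\tau$ is convex, and then invokes the C\'ordoba--C\'ordoba pointwise convexity inequality $(-\Delta)^{\alpha/2}f(w)\le f'(w)(-\Delta)^{\alpha/2}w$ together with $\int(-\Delta)^{\alpha/2}f(w)\dx=0$. You instead work directly with the bilinear (Dirichlet) form, symmetrize the singular-integral representation, and observe that the monotonicity of $g$ and $h$ makes $(h(a)-h(b))(g(a)-g(b))\ge 0$. Your route is more self-contained and makes the mechanism transparent (monotonicity $\Rightarrow$ sign of the bilinear form), whereas the paper's route is a slicker reduction to an already-available inequality. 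One technical slip worth flagging: you claim the unsymmetrized double integrand is absolutely integrable near the diagonal because $g\circ v$ is Lipschitz, but $|x-y|^{1-d-\alpha}$ is not locally integrable in $\R^d$ when $\alpha\ge 1$, so a single Lipschitz factor is not enough; one must first replace the kernel by a truncation $|x-y|^{-(d+\alpha)}\un_{\{|x-y|>\eps\}}$, perform the symmetrization for fixed $\eps$ so that \emph{both} difference factors appear (giving an $O(|x-y|^{2-d-\alpha})$ bound, integrable for all $\alpha<2$), and then pass to the limit $\eps\to 0$. You do gesture at this in your final sentence by mentioning the Lipschitz bounds on both $g\circ v$ and $h\circ v$, so the fix is within reach, but the intermediate claim as stated is incorrect for $\alpha\in[1,2)$.
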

\begin{proof}
Notice that for $\alpha=2$ this lemma is obviously true, which one
checks integrating by parts.  Now, let $\alpha\in(0,2)$.  Since
$(-\Delta)^{\frac{\alpha}{2}} C=0$ for every constant $C\in\R$, we can
assume that $g(0)=0$. In the same way, we can assume that $h(0)=0$,
because $\int (-\Delta )^{\frac{\alpha}{2}}w \dx= 0$ for every $w\in
\mathcal{C}^\infty_c(\R^d)$.  Defining $w=g(v)$, it suffices to show that
\[\int h(g^{-1}(w)) (-\Delta )^{\frac{\alpha}{2}}w \dx\ge 0\]
 for all $w\in \mathcal{C}^\infty_c(\R^d)$ such that $w\geq 0$.  To do it,
 notice that $f\in \mathcal{C}^2[0,\infty)$ defined via the relation
   $f(s)=\int_0^s h(g^{-1}(\tau))\;{\rm d}\tau$ for $s\geq 0$ is
   convex (it suffices to check that $f''(s)\geq 0$). Hence, using
    the pointwise inequality
\begin{equation}\label{in:convex}
(-\Delta)^{\frac\alpha2} g(v)\leq g'(v) (-\Delta)^{\frac\alpha2} v,
\end{equation} 
(see, {\it e.g.}, \cite{CC04}, \cite[Lemma 1]{DI06}) 
 we obtain $ \int h(g^{-1}(w))
   (-\Delta )^{\frac{\alpha}{2}}w \dx\ge \int (-\Delta
   )^{\frac{\alpha}{2}}f(w) \dx=0. $
\end{proof}
Next, we formulate a crucial  technical tool used in the
derivation of various integral estimates for solutions of the
regularized problem \eqref{reg}.

\begin{proposition}[Convexity inequalities]\label{prop:useful}
Consider a $\mathcal{C}^2$ function $\varphi: \R \to \R^+$  such
  that, for all $r\in \R$, $r \neq 0$, $\varphi''(r)>0$, and
\begin{equation}\label{growth:phi}
 \varphi(r)+|\varphi'(r)|+\varphi''(r) \le C (|r|^{M_1}+|r|^{M_2})
\end{equation}
for some constant $C>0$ and $M_1,M_2 \in [1,\infty)$. Then for all 
$0\le s<t\le T$, the function $u$ given by
  Proposition~\ref{prop:loc-time-ex} satisfies 
\begin{equation}\label{convex}
\int \varphi(u(t,x)) \dx + \int_s^t \int \psi (u(\tau,x))
(-\Delta)^{\frac\alpha2} G(u(\tau,x)) \dx\, {\rm d} \tau 
+ \delta
\int_s^t \int \varphi''(u) |\nabla u|^2\dx\, {\rm d} \tau \le \int \varphi (u(s,x)) \dx, 
\end{equation}
where $\psi(r) =  |r| \varphi'(r) -  \varphi (r)\, \sgn r$.
\end{proposition}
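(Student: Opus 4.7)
The plan is to multiply the regularized equation~\eqref{reg}, satisfied by $u$ in the sense of distributions thanks to Corollary~\ref{cor:reg}, by $\varphi'(u)$ and integrate in space and time. The algebraic heart of the argument is the elementary identity
\[
\psi'(r)=\bigl(|r|\varphi'(r)-\varphi(r)\,\sgn r\bigr)' = |r|\varphi''(r)\qquad (r\neq 0),
\]
together with $\psi(0)=0$; since $\varphi''\ge 0$, $\psi$ is continuous and nondecreasing, and by the chain rule $\nabla \psi(u)=|u|\varphi''(u)\nabla u$. This is exactly what converts the nonlocal drift into a pairing against $(-\Delta)^{\alpha/2}G(u)$.

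Formally, for each $\tau\in(s,t)$,
\[
\frac{d}{d\tau}\int \varphi(u)\dx
= \delta\int \varphi'(u)\Delta u\dx + \int \varphi'(u)\nabla\cdot\bigl(|u|\nabla^{\alpha-1}G(u)\bigr)\dx.
\]
Integrating by parts, the diffusion term becomes $-\delta\int\varphi''(u)|\nabla u|^2\dx$, and the nonlocal term becomes
\[
-\int |u|\varphi''(u)\nabla u\cdot \nabla^{\alpha-1}G(u)\dx
= -\int \nabla\psi(u)\cdot \nabla^{\alpha-1}G(u)\dx
= -\int \psi(u)(-\Delta)^{\alpha/2}G(u)\dx,
\]
where the last equality uses $\nabla\cdot\nabla^{\alpha-1}=-(-\Delta)^{\alpha/2}$. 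Integrating from $s$ to $t$ yields \eqref{convex} as an equality, hence a fortiori the stated inequality.

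To make this rigorous, I would exploit the regularity furnished by Corollary~\ref{cor:reg}: $u\in\mathcal{C}^1((0,T),L^p(\R^d))\cap\mathcal{C}((0,T),H^{1,p}(\R^d))$ for $p>\bar p_\alpha$, together with an $L^\infty$-bound obtained either directly (for $\alpha\le 1$) or from the embedding~\eqref{Hbound} (for $\alpha>1$). The polynomial growth hypothesis~\eqref{growth:phi} then secures integrability of $\varphi(u),\varphi'(u)\partial_\tau u,\psi(u)$, while the Riesz-potential bound~\eqref{Riesz} places $\nabla^{\alpha-1}G(u)$ in a suitable $L^q$, legitimating the integration by parts after a spatial cutoff that is removed by dominated convergence. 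The main obstacle is the nonsmoothness of $|\cdot|$ and $\sgn$ at $0$: I would handle it by replacing $|r|$ by $(r^2+\eta^2)^{1/2}-\eta$, obtaining a $\mathcal{C}^2$ approximation $\psi_\eta$ of $\psi$ with $\psi_\eta'(r)=\varphi''(r)\cdot r^2(r^2+\eta^2)^{-1/2}\to|r|\varphi''(r)$ pointwise, performing the calculation above with $\psi_\eta$, and passing to the limit $\eta\searrow 0$ via dominated convergence. Since each step at the approximate level is an equality, Fatou's lemma on the two nonnegative terms on the left (the dissipation and, once ready, the contribution containing $(-\Delta)^{\alpha/2}G(u)$) accounts for the use of $\le$ instead of $=$ in the statement.
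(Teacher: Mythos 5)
Your plan correctly identifies the right test function (essentially $\varphi'(u)$ localized in time, as the paper also does) and the algebraic identity $\psi'(r)=|r|\varphi''(r)$ that converts the drift term into $\int\nabla^{\alpha-1}G(u)\cdot\nabla\psi(u)$. But you have misidentified the genuine difficulty and, as a result, the key step of the argument is missing. The nonsmoothness of $|\cdot|$ and $\sgn$ at $0$ is actually harmless here: because $\varphi$ is $\mathcal{C}^2$ with $\varphi(0)=0$ and $\varphi'(0)$ finite (which follows from \eqref{growth:phi} with $M_1,M_2\ge 1$), the function $\psi(r)=|r|\varphi'(r)-\varphi(r)\sgn r$ is continuous with $\psi(0)=0$, and the pointwise identity $\nabla\psi(u)=|u|\varphi''(u)\nabla u$ for $u\in H^{1,p}\cap L^\infty$ follows from the Stampacchia chain rule, not from a smoothing of $|\cdot|$. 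Your $\eta$-regularization of $\psi$ is therefore unnecessary and does not target the real obstruction.

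The step you treat as a one-line consequence of $\nabla\cdot\nabla^{\alpha-1}=-(-\Delta)^{\alpha/2}$, namely
\[
\int_s^t\!\!\int \nabla^{\alpha-1}G(u)\cdot\nabla\psi(u)\,\dx\,\dta
\;\ge\;
\int_s^t\!\!\int \psi(u)\,(-\Delta)^{\frac\alpha2}G(u)\,\dx\,\dta,
\]
is precisely where the work is, and in fact the paper proves only an inequality here --- not the equality your formal computation asserts. The functions $G(u)$ and $\psi(u)$ are only Sobolev-regular and may lack decay, so the nonlocal integration by parts is not automatic, and $(-\Delta)^{\alpha/2}G(u)$ is not obviously in a space where the pairing with $\psi(u)$ is legitimate. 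The paper handles this by invoking the singular-integral representation \eqref{frac:int} with a truncated kernel $F_\eta(z)=C_{d,\alpha}\,z/(\eta^{d+\alpha}+|z|^{d+\alpha})$, performing an ordinary integration by parts in $x$, exploiting the oddness of $F_\eta$ to kill one of the resulting terms, and then passing to the limit $\eta\to 0$ by Fatou's lemma on the surviving nonnegative term. That is the source of the $\le$ in \eqref{convex}. Your proposal replaces this with a Fatou argument applied to the $\psi_\eta\to\psi$ limit, which is a different (and, as noted, unneeded) approximation and cannot substitute for the kernel-truncation argument. Without some version of the latter, the dissipation term in \eqref{convex} has not been shown to be well-defined, finite, or bounded by the weak form.
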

The proof of Proposition \ref{prop:useful} is more or less classical,
and we recall it in Appendix for the sake of completeness.

\begin{remark}
Remark that $\psi'(r)=|r|\varphi''(r)>0$, hence the function $\psi$ is
increasing. Since $G$ is also increasing, the result stated in
Lemma \ref{lem:hg} can be applied to show that the
first dissipation term
\[ \int_s^t \int \psi (u(\tau,x))
(-\Delta)^{\frac\alpha2} G(u(\tau,x)) \dx\, {\rm d} \tau \] 
is nonnegative.  The fact that this quantity is finite is a part of the
result stated in Proposition \ref{prop:useful}.  Moreover,
\cite[Theorem~2.2]{LS} implies that $g_\varphi (G (u)) \in L^2
((0,T),H^{\frac{\alpha}{2},2}(\R^d))$ for a function $g_\varphi$
constructed from $\varphi$, see \cite{LS} for the detailed
presentation. The special case $\varphi(r)=|r|^p$ is treated below.
\end{remark}
\begin{remark}
The convexity of $\varphi$ also implies that the second dissipative
term in \eqref{convex} is nonnegative. Hence, Proposition
\ref{prop:useful} implies that $\int \varphi (u(t,x)) \dx$ decreases
along the flow of the regularized equation \eqref{reg}.
\end{remark}

\begin{corollary}[Estimates of the $L^p$-norms]\label{cor:lp-norms}
For all $p \in (1,\infty)$ and $0 < s < t$,
\begin{equation} \label{Lp*-int}
 \int |u(t)|^p\dx + (p-1)\int_s^t \int |u|^{p-1} u (-\Delta)^{\frac\alpha2} G(u)\dx \dta 
+ \delta p (p-1) \int_s^t \int |u|^{p-2}|\nabla u|^2 \dta \le  \int |u(s)|^p\dx .
\end{equation}
In particular, for $p \ge \bar{p}_\alpha$ (see Corollary~\ref{cor:reg}), 
\begin{equation} \label{Lp*}
 \frac{\rm d}{\dt}\int |u|^p\dx 
\le - (p-1)\int |u|^{p-1} u (-\Delta)^{\frac\alpha2} G(u)\dx -
\delta p (p-1) \int |u|^{p-2}|\nabla u|^2.
\end{equation}
 Thus,  for all $p \in [1,\infty]$, the norm $\|u(t)\|_p$ decreases as
  $t$ increases.
\end{corollary}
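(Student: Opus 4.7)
The plan is to specialize Proposition~\ref{prop:useful} to $\vphi(r)=|r|^p$, after a standard $\mathcal{C}^2$ regularization when $p\in(1,2)$. Direct computation yields
\[
\vphi'(r)=p|r|^{p-2}r,\qquad \vphi''(r)=p(p-1)|r|^{p-2},\qquad
\psi(r)=|r|\vphi'(r)-\vphi(r)\sgn r=(p-1)|r|^{p-1}r,
\]
and substituting these into \eqref{convex} produces \eqref{Lp*-int} verbatim. For $p\ge 2$, the function $\vphi(r)=|r|^p$ is $\mathcal{C}^2$ and meets the growth bound \eqref{growth:phi}, so Proposition~\ref{prop:useful} applies directly. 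For $p\in(1,2)$ (and, strictly speaking, also at $p=2$), the second derivative is not integrable at the origin, so I would instead apply Proposition~\ref{prop:useful} to the regularization $\vphi_\eta(r)=(r^2+\eta^2)^{p/2}-\eta^p$, which is $\mathcal{C}^\infty$ with a strictly positive, bounded second derivative, and then let $\eta\searrow 0$. The terms $\int\vphi_\eta(u(\cdot))\dx$ converge by dominated convergence (with majorant controlled by the $L^1\cap L^\infty$ or $L^1\cap H^{\alpha-1,p}$ bound from Theorem~\ref{thm:reg}), while the two dissipation integrals on the left pass to the limit by Fatou's lemma, since their integrands are nonnegative (cf.\ Lemma~\ref{lem:hg}) and converge pointwise to the expressions corresponding to $\vphi=|r|^p$.

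To promote the integral inequality \eqref{Lp*-int} to the differential inequality \eqref{Lp*} for $p\ge \bar{p}_\alpha$, I would invoke Corollary~\ref{cor:reg}, which gives $u\in\mathcal{C}^1((0,T),L^p(\R^d))$. Consequently $t\mapsto\int|u(t)|^p\dx$ is absolutely continuous on $(0,T)$, and dividing \eqref{Lp*-int} by $t-s$ and letting $t\searrow s$ yields \eqref{Lp*} pointwise.

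The monotonicity of $t\mapsto\|u(t)\|_p$ on the full range $p\in[1,\infty]$ follows once both dissipation terms on the left of \eqref{Lp*-int} are shown to be nonnegative. The $\delta$-term is obvious. The nonlocal term is handled by Lemma~\ref{lem:hg} applied with the strictly increasing functions $h(r)=(p-1)|r|^{p-1}r$ and $g=G$, giving $\int|u|^{p-1}u(-\Delta)^{\alpha/2}G(u)\dx\ge 0$. Hence $\|u(t)\|_p\le\|u(s)\|_p$ for every $p\in(1,\infty)$. The endpoint $p=\infty$ is recovered by taking $p\to\infty$ in this bound, and the endpoint $p=1$ either by $p\to 1^+$ or, more transparently, by applying Proposition~\ref{prop:useful} to $\vphi_\eta(r)=\sqrt{r^2+\eta^2}-\eta$ and letting $\eta\to 0$.

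The principal technical hurdle is the $\eta\to 0$ limit for $p\in(1,2)$: because $\vphi_\eta''$ and $\psi_\eta$ do not converge monotonically, one cannot invoke monotone convergence directly. The remedy is that, by Lemma~\ref{lem:hg} and the convexity of $\vphi_\eta$, each dissipation integrand is nonnegative, which makes Fatou's lemma sufficient to transfer \eqref{convex} from $\vphi_\eta$ to $\vphi$. All remaining convergences are routine consequences of the a~priori bounds on $u$ established in Theorem~\ref{thm:reg} and Corollary~\ref{cor:reg}.
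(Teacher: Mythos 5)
Your proposal follows essentially the paper's route: specialize Proposition~\ref{prop:useful} to $\vphi(r)=|r|^p$ (regularized for small $p$), pass to the limit via Fatou, derive \eqref{Lp*} from \eqref{Lp*-int} using $u\in\mathcal{C}^1((0,T),L^p(\R^d))$ from Corollary~\ref{cor:reg}, obtain the nonnegativity of the nonlocal dissipation through Lemma~\ref{lem:hg}, and treat the endpoints $p=1,\infty$ by taking limits in $p$.

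Two small inaccuracies are worth flagging, neither of which changes the soundness of the strategy. First, $\vphi(r)=|r|^p$ satisfies the growth hypothesis \eqref{growth:phi} only for $p\ge3$ (one needs $M_2=p-2\ge1$ so the right-hand side controls $\vphi''(r)=p(p-1)|r|^{p-2}$ near $r=0$); for $p\in[2,3)$ the condition fails at the origin even though $\vphi\in\mathcal{C}^2$, and the stated reason for excluding $p\in(1,2)$ (``second derivative not integrable at the origin'') is also not quite right --- $|r|^{p-2}$ is locally integrable for $p>1$, the problem is that it is unbounded so $\vphi\notin\mathcal{C}^2$. The paper accordingly treats $p\ge3$ directly and regularizes for smaller $p$. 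Second, your regularizer $\vphi_\eta(r)=(r^2+\eta^2)^{p/2}-\eta^p$ has $\vphi_\eta''(0)=p\,\eta^{p-2}\neq0$, so it too violates \eqref{growth:phi} at $r=0$ (the hypothesis, with $M_1,M_2\ge1$, forces $\vphi''(0)=0$); the paper instead builds $\vphi_\eta$ by prescribing a $\vphi_\eta''$ that vanishes at the origin. Either choice can be repaired with a further cutoff near $r=0$, and the Fatou passage to the limit then works as you describe.
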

\begin{proof}
If $p \ge 3$, we can apply Proposition~\ref{prop:useful} with $0\le
s<t\le T$, $\varphi (r) = |r|^p$; indeed, in this case, $\varphi$ is
a $\mathcal{C}^2$-function and satisfies the growth assumption 
\eqref{growth:phi}
with $\langle
M_1,M_2\rangle=\langle p,p-2\rangle$. Next, since $u \in
\mathcal{C}^1((0,T),L^p (\R^d))$, we obtain \eqref{Lp*} from the
inequality in Proposition \ref{prop:useful} by a direct computation.
We leave the details to the reader.

If $p \in (1,2)$, we consider, for each $\eta >0$, the function
  $\varphi_\eta$ such that $\varphi_\eta (0)=\varphi_\eta'(0)=0$ and 
\[\varphi_\eta''(r)
  = p(p-1) ((r^2+\eta^2)^{\frac{p}{2}-1} - \eta^{p-2}).\] In
  particular, the function $\varphi_\eta$ satisfies the assumptions of
  Proposition~\ref{prop:useful}, hence, we have
\begin{multline*}
\int \varphi_\eta(u(t,x)) \dx + \int_s^t \int \psi_\eta (u(s,x))
(-\Delta)^{\frac\alpha2} G(u(s,x)) \dx\, {\rm d} s \\
+ \delta \int_s^t \int \varphi''_\eta(u(s,x)) |\nabla u (s,x)|^2 \dx \,
{\rm d} s \le \int \varphi_\eta
(u(s,x)) \dx,
\end{multline*}
with $\psi_\eta'(r)=|r|\varphi_\eta''(r)$, $\psi_\eta(0)=0$. 
Letting now $\eta \to 0$ and using the Fatou lemma  yields 
the integral formulation of inequality \eqref{Lp*}. 

Thus, we just proved that $\|u(t)\|_p\leq \|u_0\|_p$ for all $t>0$ and
$p\in(1,\infty)$.  By computing  the limits as $p\to \infty$ and $p \to 1$,
the bounds $\|u(t)\|_1 \le \|u_0\|_1$ and $\|u(t)\|_\infty\le
\|u_0\|_\infty$ are also obtained.
\end{proof}
We can now complete the proof of  Theorem~\ref{thm:reg}. 
\begin{proof}[Proof of Theorem~\ref{thm:reg}]
In view of Proposition~\ref{prop:loc-time-ex} and
Corollary~\ref{cor:reg}, it remains to prove that solutions
are  nonnegative if   initial data are so, and that solutions are
global in time. 

The positivity property is derived immediately in a usual way from the
conservation of mass property \eqref{thm:mass} and the monotonicity of
the $L^1$-norm.  Indeed, \eqref{thm:mass} yields
\begin{equation*} 
\int u (T,x) \dx= \int u_+(T,x) \dx - \int u_-(T,x) \dx= \int ((u_0)_+
-(u_0)_-) (x)\dx
\end{equation*}
and
\begin{equation*} 
\int |u (T,x)| \dx= \int u_+(T,x) \dx + \int u_-(T,x) \dx\leq \int ((u_0)_+
+(u_0)_-) (x)\dx
\end{equation*}
(here, as usual, $u_+=\max\{0,u\}$ and $u_-=\max\{0,-u\}$).
These inequalities imply
$\int u_-(T,x) \dx\le \int (u_0)_-(x) \dx$, and,
in particular, the assumption $(u_0)_-=0$  gives  us $u\ge 0$ a.e.
\medskip

As far as the global existence of solutions is concerned, we argue as
follows.

For $\alpha\in(0,1]$, the time interval, where a solution is
constructed via the Banach fixed point theorem, depends only on
$\|u_0\|_1+ \|u_0\|_\infty$, and this norm of the solution does not
increase. Hence, we extend $u=u(t,\cdot)$ to the whole half-line
$[0,\infty)$, step-by-step.

For $\alpha\in(1,2)$, the Duhamel formula \eqref{Duh} and inequality
\eqref{Riesz3} with $v=0$ yield
\[ \|u(t)\|_{H^{\alpha-1,p}} \le \|u_0\|_{H^{\alpha-1,p}} +C_1(\|u_0\|_{p^\ast})\int_0^t 
(t-s)^{-\frac{\alpha}{2}}\|u(s)\|_{H^{\alpha-1,p}} \ds.\] 
Due to the singular Gronwall lemma, see, {\it e.g.}, \cite[Ch. 5,
  Lemma 6.7]{Pazy}, we deduce that the norm
$\|u(t)\|_{H^{\alpha-1,p}}$ cannot explode in finite time. This shows
that local-in-time solutions of the regularized equation \eqref{reg}
can be also continued to global-in-time ones.
\end{proof}

\section{Hypercontractivity and compactness estimates}

\subsection*{Hypercontractivity estimates}

We now turn to prove certain $L^1\mapsto L^p$  estimates for solutions of problem \eqref{reg}.

\begin{theorem}[$L^p$-decay of solutions to the regularized problem] 
\label{thm:Lp:reg}
Let $u=u(t,x)$ be a solution to the regularized problem \eqref{reg}
constructed in Theorem \ref{thm:reg}.  There exists a~constant
$C=C(d,\alpha,m)>0$ such that for all $\e>0$, $\delta>0$ and $p \in
[1,\infty]$,
\begin{equation}\label{Lpdec:reg}
\|u(t)\|_p\le C \|u_0\|_1^{\frac{d(m-1)/{p} +{\alpha}}{d(m-1)+\alpha}}
t^{-\frac{d}{d(m-1)+\alpha}\big(1-\frac{1}{p}\big)}
\end{equation}
for all $t>0$. 
\end{theorem}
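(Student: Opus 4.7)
The strategy I would use is the classical Nash--Moser--type ``energy plus interpolation'' scheme that produces hypercontractivity estimates for porous-medium-type evolutions. The starting point is the differential inequality from Corollary~\ref{cor:lp-norms}: for $p \ge \bar p_\alpha$,
\[ \frac{d}{dt}\|u(t)\|_p^p \le -(p-1) \int |u|^{p-1} u \,(-\Delta)^{\frac{\alpha}{2}} G_\e(u)\,dx. \]
I would drop the (nonnegative) $\delta$-viscous dissipation and bound the fractional dissipation from below using a Stroock--Varopoulos-type inequality. The content is that, since $G_\e$ is strictly increasing with $G_\e(u)\sim \text{sgn}(u)|u|^{m-1}$ for $|u|\gg \e$, one has
\[ (p-1)\int |u|^{p-1} u \,(-\Delta)^{\frac{\alpha}{2}} G_\e(u)\,dx \ge c(p,m)\,\bigl\|\nabla^{\frac{\alpha}{2}}|u|^{r/2}\bigr\|_2^2, \qquad r = p+m-1, \]
with $c(p,m)$ bounded below uniformly in $\e$. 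This is obtained by applying Proposition~\ref{prop:SV} to $w = G_\e(u)$ with the exponent $q=(p+m-1)/(m-1)$ chosen so that $|w|^{q-1}\operatorname{sgn}(w) = |u|^{p-1}u$; the ``pointwise'' inequality of convex type \eqref{in:convex} lets one absorb the error produced by the cut-off $\e^{m-1}$. The constant simplifies to $4p(m-1)/r^2$ in the clean case.

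Combining this lower bound with Lemma~\ref{GN} (the Gagliardo--Nirenberg inequality with exactly these exponents $r$, $a$, $b$) and using the mass-type control $\|u(t)\|_1 \le \|u_0\|_1$ from Theorem~\ref{thm:reg}, I would arrive at the closed differential inequality
\[ \frac{d}{dt}\|u(t)\|_p^p \le -K(p,m,d,\alpha)\,\|u(t)\|_p^{a}\,\|u_0\|_1^{-b}, \]
with $a$ and $b$ as in \eqref{a-b}. Setting $y(t)=\|u(t)\|_p^p$ this is $y'\le -K\|u_0\|_1^{-b}\, y^{a/p}$, with $a/p>1$, and an elementary computation
\[ \frac{a}{p}-1 = \frac{d(m-1)+\alpha}{d(p-1)} \]
gives by direct integration
\[ y(t) \le \Bigl(K\|u_0\|_1^{-b}\,\frac{d(m-1)+\alpha}{d(p-1)}\,t\Bigr)^{-\frac{d(p-1)}{d(m-1)+\alpha}}. \]
Taking the $p$-th root and inserting the expression for $b$ yields the $t$-exponent $-\frac{d}{d(m-1)+\alpha}(1-1/p)$ and, after a short calculation, the $L^1$-exponent $\frac{d(m-1)/p+\alpha}{d(m-1)+\alpha}$ claimed in \eqref{Lpdec:reg}. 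The $p=\infty$ case is obtained by passing to the limit $p\to\infty$ once one checks that $K(p,m,d,\alpha)$ behaves like a polynomial in $p$ while the exponent $1/(a-p)$ kills this growth, so the final constant $C(d,\alpha,m)$ can be taken independent of $p$. The $L^1$ bound is the already known mass preservation.

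The main obstacle is twofold and essentially the same point: first, justifying the Stroock--Varopoulos step for the approximant $G_\e(u)$ applied to sign-changing $u$, with a lower-bound constant that does not degenerate as $\e\downarrow 0$. The condition \eqref{cond:m} on $m$ enters precisely here, because one needs the exponent $r=p+m-1$ to fall in the regime where Lemma~\ref{GN} is applicable for the full range of $p$ down to $1$; one also needs $G_\e'$ to behave like $|u|^{m-2}$ on the relevant scale, which forces the lower bounds on $m$ stated in the theorem. Second, one must confirm that all integrations by parts and the SV application are legitimate under the regularity \eqref{reg-sol} supplied by Corollary~\ref{cor:reg}; the safest route is to run the argument on a smoother cut-off of $u$ (or on $(u^2+\eta^2)^{1/2}-\eta$) and then pass to the limit via Fatou, exactly as in the proof of Corollary~\ref{cor:lp-norms}.
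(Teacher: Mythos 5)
Your derivation of the closed differential inequality is in substance the same as the paper's Lemma~\ref{lem:diffineq}, and integrating it does yield the stated decay exponents. However, the claim that one can pass to $p=\infty$ directly ---
because ``$K(p,m,d,\alpha)$ behaves like a polynomial in $p$ while the exponent $1/(a-p)$ kills this growth'' --- is false, and it is exactly the obstruction the paper flags before introducing the recursion. The constant $K$ from \eqref{K-opt} is bounded above and below in $p$ (it tends to $4(m-1)/C_N$), so polynomial growth of $K$ is not even the issue, and $1/(a-p)$ tends to the nonzero limit $d/(d(m-1)+\alpha)$, so that exponent tames nothing. What actually diverges is the constant produced by the direct integration,
\[
C_p=\Bigl(K\bigl(\tfrac{a}{p}-1\bigr)\Bigr)^{-\frac{1}{a-p}},
\]
because the base contains $\tfrac{a}{p}-1=\tfrac{d(m-1)+\alpha}{d(p-1)}\to 0$ while $-1/(a-p)$ stays at a fixed negative value; raising a vanishing quantity to a fixed negative power gives $C_p\to\infty$.

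This is precisely why the paper does not stop after the integration: it devotes the whole ``recurrence step'' to a Moser--Alikakos iteration along $p=2^n$ (Lemma~\ref{lem:recursive}, estimate \eqref{rec-kappa}, and Appendix~\ref{app:recursive}), proving that the resulting sequence $\kappa_n$ stays bounded --- essentially because $\sum_n n\,2^{-n}<\infty$ --- and only then interpolating to recover all $p\in[1,\infty]$ with a constant $C(d,\alpha,m)$ independent of $p$. Without that recursion there is no uniform constant and no $L^\infty$-bound, so this is a genuine gap in your argument, not a cosmetic one. Two smaller remarks: the Stroock--Varopoulos step gives the prefactor $\tfrac{4p(p-1)(m-1)}{(p+m-1)^2}$, not $4p(m-1)/r^2$ (you dropped the $(p-1)$ coming from $\tfrac{d}{dt}\|u\|_p^p$); and your sketch of how to treat $G_\varepsilon$ via the integral form of Corollary~\ref{cor:lp-norms} and a Fatou passage is consistent with what the paper does for $m\in(1,3)$.
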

\begin{proof}  We first remark that it is enough to prove the
decay estimate \eqref{Lpdec:reg} for large $p$'s, since the general
result follows by the interpolation of the $L^p$-norms combined with
the estimate $\|u(t)\|_1\le \|u_0\|_1$ from
Corollary~\ref{cor:lp-norms}. This is the reason why we will now prove
\eqref{Lpdec:reg} for $p \ge \max\{m-1,1,\bar{p}_\alpha\}=p_m$ (see
Corollary~\ref{cor:reg} for a definition of $\bar{p}_\alpha$).

We also remark that we can assume that $M=\|u_0\|_1=1$ by rescaling
the solution $u$ in the following way. First, we consider the function
$\widetilde u(t)=\frac{1}{M}u\left(\frac{t}{M^{m-1}}\right)$ which
satisfies equation~\eqref{reg} with suitably rescaled parameters:
$\tilde \delta = \frac{\delta}{M^{m-1}}$ and (if applicable) $\tilde\e
= \frac{\e}M$. Scaling back, we recover the desired inequality
\eqref{Lpdec:reg}.

We first prove the result when Corollary~\ref{cor:lp-norms} holds true
with $G(r)=|r|^{m-2}r$ and in the differential form~\eqref{Lp*}.  This
is the case when $m=2$ or $m \ge 3$. In the case $m \in (1,3)$,
Corollary~\ref{cor:lp-norms} holds true only in the integral
sense~\eqref{Lp*-int} and for a regularized function $G$. We will see
below how to pass to the limit as the regularization parameter $\eps$
goes to $0$ and get \eqref{Lp*-int} with $G(r)=|r|^{m-2}r$. For
expository reasons, we prefer to present the proof when we indeed have
a differential inequality, and then to explain how to adapt it if only
an integral version of it is available.
\bigskip

The proof on Theorem~\ref{thm:Lp:reg} in the cases $m \ge 3$ and $m=2$
is split into two steps: first, we show inequalities \eqref{Lpdec:reg}
with non-optimal constants $C$ which blow up for $p=\infty$; then, we
improve those constants by an iteration method.

{\it Decay estimates with optimal exponents and nonoptimal
  constants.}  Our computation consists in getting the following
differential inequality for $p \in (p_m,\infty)$.
\begin{lemma}\label{lem:diffineq}
There exists a constant $K=K(p,m)>0$ independent of $\e>0$ and
$\delta>0$ such that $K$ and $K^{-1}$ are bounded as $p \to \infty$
and
\begin{equation}\label{diff:ineq}
  \frac{\rm d}{\dt}\|u\|_p^p\le - K\|u\|_p^a,
 \end{equation}
with   $a$  defined in \eqref{a-b}. 
\end{lemma}
\begin{proof}
 We get from Corollary~\ref{cor:lp-norms}
\begin{align} 
  \frac{\rm d}{\dt}\int |u|^p\dx 
  &\leq -(p-1)\int u|u|^{p-1}(-\Delta)^{\frac\alpha2}(u |u|^{m-2})\dx
\label{Lp}\\
&\le -\frac{4p(p-1)(m-1)}{(p+m-1)^2}
\left\|\nabla^{\frac\alpha2}\left(|u|^{\frac{p+m-1}2}\right)
\right\|^2_2, \nonumber 
\end{align}
after applying the Stroock--Varopoulos inequality \eqref{SV} with
$w=u|u|^{m-2}$ and $q=\frac{p}{m-1}+1$. We use next the
Gagliardo--Nirenberg inequality from Lemma~\ref{GN} combined with  $\|u(t)\|_1 \le \|u_0\|_1 =1$ to estimate the
right-hand side of the above inequality. Thus, we get the differential
inequality \eqref{diff:ineq} with the constant
\begin{equation}
K=\frac{4(m-1)p(p-1)}{C_N(p+m-1)^2}\label{K-opt}.
\end{equation}  
\end{proof}
With the differential inequality \eqref{diff:ineq} in hand,  a
direct computation shows that every nonnegative solution of the
inequality $ \frac{\rm d}{\dt}f(t)\le -K f(t)^{\frac{a}p}
$ has to satisfy the algebraic decay 
\[f(t)\le
\left(K\left(\frac{a}p-1\right)\, t\right)^{-\frac{1}{\frac{a}p-1}}.\]
We recognize \eqref{Lpdec:reg} for $p> p_m$ with the constant
$C_p=\left(K\left(\frac{a}{p}-1\right)\right)^{-\frac{1}{a-p}}$.
Here, let us notice that the constants $C_p=C(d,\alpha,m,p)$ which are
obtained at this stage of the proof blow up as $p\to\infty$. Thus, we
cannot get the $L^\infty$-bound directly in this way.
\bigskip

\noindent {\it Recurrence step.}  To improve the constant $C_p$ and to
handle the limit case $p=\infty$, we apply a variation on the
Moser--Alikakos method of estimating the $L^p$-norms with $p=2^n$
recursively, see, {\em e.g.}, \cite{A0} and \cite[Lemma 3.1]{KMX}.
The starting point is the already obtained estimate for $p=2^k\ge m-1$
with the least integer $k$. 
\begin{lemma}\label{lem:recursive}
For each $n\ge k$, the following estimate
\begin{equation}\label{rec}
\|u(t)\|_{2^n}\le \kappa_nt^{-\mu_n} \ \ \ {\rm for\ all}\ \  t>0, 
\end{equation}
holds true with $\mu_n=\frac{1-2^{-n}}{\frac{\alpha}{d}+m-1}$ and with
a positive $\kappa_n$ satisfying the recursive estimate
\begin{equation}\label{rec-kappa}
  \kappa_{n+1}\le \bigg[
    \frac{2^n\left(\frac{2\alpha}d + \frac{m-1}{2^n}\right)\mu_n
      +1}{K_n \big(\frac{\alpha}{d}+\frac{m-1}{2^n}\big)}
\kappa_n^{2^n\big(\frac{2\alpha}{d}+\frac{m-1}{2^n}\big)}\bigg]^{2^{-n-1}
\frac{1}{\frac{\alpha}{d}+\frac{m-1}{2^n}}},
\end{equation}
where $K_n$ is given by formula \eqref{K-opt}  with $p = 2^{n+1}$. 
\end{lemma}
Thus, having this estimate we see that
$\limsup_{n\to\infty}\kappa_n<\infty$ (irrespective of the value of
$\kappa_k$ at the beginning of the recurrence), essentially since
$\sum_{n=k}^\infty n2^{-n}<\infty$,  see Appendix~\ref{app:recursive}
for details. Recall that the constants $K=K_n$ in the preliminary
estimates have been such that $K_n$ and $K_n^{-1}$ were bounded
uniformly when $p=2^n\to\infty$.
\begin{proof}[Proof of Lemma~\ref{lem:recursive}]
  We combine \eqref{Lp} with the Nash inequality \eqref{Nash-r} and
  two H\"older inequalities with $2^{n+1}<r=2^{n+1}+m-1\le
  2^{n+2}$
\[\|u\|_{2^{n+1}}\le \|u\|_r^\gamma\|u\|_{2^n}^{1-\gamma} \qquad \text{with}
\quad \frac{1}{\gamma}=2-\frac{2^{n+1}}{r},\] 
and
\[ \|u\|_{\frac{r}2}\le
\|u\|_{2^{n+1}}^\delta\|u\|_{2^n}^{1-\delta}\qquad \text{with} 
\quad \delta=2-\frac{2^{n+2}}{r}.\] 
As the result, we have
\[\frac{\rm d}{{\rm d}t}\|u\|_{2^{n+1}}^{2^{n+1}}\le
-K\|u\|_{2^{n+1}}^{2^{n+1}\big(1+\frac{\alpha}{d}+\frac{m-1}{2^n}\big)}
\|u\|_{2^n}^{-2^n\big(\frac{2\alpha}{d}+\frac{m-1}{2^n}\big)} \]
 with some $K$ as in \eqref{diff:ineq} corresponding to $p=2^{n+1}$.
Next we estimate the $L^{2^n}$-norm of $u(t)$ using \eqref{rec}, and
arrive to the differential inequality for
$f(t)=\|u(t)\|_{2^{n+1}}^{2^{n+1}}$ of the form
\begin{equation}\label{diff:ineq2} 
\frac{\rm d}{{\rm d}t}\big(f(t)^{-\frac{\alpha}{d}-\frac{m-1}{2^n}}\big)\ge
\frac{K\big(\frac{\alpha}{d}+\frac{m-1}{2^n}\big)}{ \kappa_n^{2^n\big(\frac{2\alpha}{d}+\frac{m-1}{2^n}\big)}}
  t^{\mu_n2^n\big(  \frac{2\alpha}{d}+\frac{m-1}{2^n}\big)} .
\end{equation}
 Finally, we integrate this inequality on $[0,t]$ and take a
suitable negative power to arrive to \eqref{rec-kappa}.   
\end{proof}

 Interpolation between $p=2^n$ and $p=2^{n+1}$ and the passage to the
 limit $p\to\infty$ finish  the proof of the hypercontractivity
 estimates in the cases $m \ge 3$ and $m=2$.
\medskip

To deal with the case $m \in (1,3)$, instead of the differential
inequality from Corollary \ref{cor:lp-norms}, we use its integral
counterpart \eqref{Lp*-int}.  In particular, inequalities \eqref{diff:ineq} and
\eqref{diff:ineq2} have their integral counterparts as well (see
inequality \eqref{estim:compact}, below). Hence, the previous proof
works in this case by applying the following lemma with $g(t)=t$ and
$g(t)=Ct^\nu$ for some well chosen positive constants $\nu,\,C$,
successively.
\begin{lemma}\label{lem:integral}
Consider functions  $f:[0,T] \to (0,\infty)$ nonincreasing, and $g:[0,T] \to
(0,\infty)$  increasing, smooth, and $g(0)=0$. Assume that for
a.e. $t \in (0,T)$, $s <t$,
\[ f(t) + K \int_s^t f(\tau)^{\gamma+1} g'(\tau) \dta \le f(s). \]
 Then for a.e. $t \in [0,T]$, we have 
\[ f(t) \le (K \gamma g(t))^{-\frac1\gamma}.\]
\end{lemma}
The proof of this lemma is given in Appendix~C for the readers'
convenience. Now, the proof of Theorem~\ref{thm:Lp:reg} is complete.
\end{proof}

\subsection*{Compactness estimates}

Now, we prove estimates which will allow us to pass to the limit as
$\eps \to 0$ and $\delta \to 0$, successively, in the regularized
problem \eqref{reg}.  We are going to use the fact that the
approximating functions $G=G_\varepsilon$ are $\gamma$-H\"older
continuous with $\gamma = \min \{m-1,1\}$ on the interval $\big[0,
  \|u_0\|_\infty\big]$, uniformly in $\varepsilon \in [0,1]$.  These
estimates are formulated in the following technical lemmas.
\begin{lemma}\label{lem:tek1}
Assume that $G$ is $\gamma$-H\"older continuous with $\gamma = \min \{m-1,1\}$.
For every $\alpha \in (0,1]$ and $m > 1+ \frac{1-\alpha}d\in(1,2)$,
  we have
\begin{equation}\label{estim:tek1}
\|\nabla^{\alpha-1} G (u) \|_q \le C \|u\|_{\gamma p}^\gamma
\end{equation}
with
\[q \ge (m-1-(1-\alpha)/d)^{-1} \text{ and } p = (1/q+(1-\alpha)/d)^{-1}.\]
\end{lemma}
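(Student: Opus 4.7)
The proof is a direct chain of three standard mapping properties, so the plan is to isolate each link and then verify that the hypothesis on $q$ keeps all the parameters in the admissible range.

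First I would decompose the operator. Since $\alpha \in (0,1]$, write
\[
\nabla^{\alpha-1} = \nabla^{0}\,\mathcal{I}_{1-\alpha},
\]
where the components of $\nabla^{0}$ are Riesz transforms. These are bounded on $L^q(\mathbb{R}^d)$ for every $q\in(1,\infty)$, so it suffices to estimate $\|\mathcal{I}_{1-\alpha}G(u)\|_q$. This step just recalls the identity already used in the proof of \eqref{Riesz2}.

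Next I would apply the fractional integration theorem \eqref{Riesz} to the Riesz potential $\mathcal{I}_{1-\alpha}$ with exponents $p$ and $q$ linked by
\[
\frac{1}{p} = \frac{1}{q} + \frac{1-\alpha}{d},
\]
which is precisely the relation in the statement. This gives $\|\mathcal{I}_{1-\alpha}G(u)\|_q \le C\|G(u)\|_p$, provided $p,q\in(1,\infty)$ and $1-\alpha\in(0,d)$; the latter is automatic since $\alpha\in(0,1]$ and $d\ge 1$ (the endpoint $\alpha=1$ degenerates into the trivial $\mathcal{I}_0=\mathrm{Id}$ and no fractional integration is needed).

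Finally I would exploit the $\gamma$-Hölder continuity of $G$ together with $G(0)=0$ (which holds by \eqref{hyp:G}, and is retained by the chosen approximations $G_\varepsilon$) to conclude $|G(u(x))|\le C|u(x)|^\gamma$ pointwise, hence
\[
\|G(u)\|_p \le C \bigl\| |u|^\gamma \bigr\|_p = C\,\|u\|_{\gamma p}^\gamma.
\]
Chaining the three estimates yields \eqref{estim:tek1}.

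The only delicate point — and the real content of the stated constraint $q\ge(m-1-(1-\alpha)/d)^{-1}$ — is ensuring that all exponents are admissible. We need $q>1$ (for the Riesz transforms), $p>1$ (for HLS), and $\gamma p\ge 1$ so that $\|u\|_{\gamma p}$ can be controlled via the Lebesgue norms available in the scheme. The condition $\gamma p\ge 1$ reads $\frac{1}{q}+\frac{1-\alpha}{d}\le \gamma$; using $\gamma=\min\{m-1,1\}$ and the hypothesis $m>1+(1-\alpha)/d$ (which guarantees the right-hand side $m-1-(1-\alpha)/d$ is positive), this rearranges to exactly $q\ge(m-1-(1-\alpha)/d)^{-1}$, which is the bound postulated in the lemma; the same bound forces $p\ge 1/\gamma\ge 1$, closing the parameter check. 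There is no genuine obstacle beyond this bookkeeping.
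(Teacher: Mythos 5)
Your proposal follows exactly the route the paper takes, just with the details spelled out: decompose $\nabla^{\alpha-1}=\nabla^0\mathcal{I}_{1-\alpha}$ and use boundedness of the Riesz transforms, apply the fractional integration theorem \eqref{Riesz} with $\frac1p=\frac1q+\frac{1-\alpha}{d}$, then bound $\|G(u)\|_p\le C\|u\|_{\gamma p}^{\gamma}$ via the $\gamma$-Hölder continuity together with $G(0)=0$, and finally verify that the stated lower bound on $q$ is precisely the rearrangement of $\gamma p\ge 1$ (using $\gamma=m-1$ for $m\in(1,2)$). The paper's one-line proof says exactly this; your bookkeeping of the admissible ranges of $p$ and $q$ is the correct reading of the terse argument.
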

\begin{proof}
Here, it suffices to combine the estimate on the Riesz potential with
\( \|G (u)\|_p \le C \|u\|_{\gamma p}^\gamma.\) The choice of $q$ (or
equivalently, the restriction on $m$) ensures that $\gamma p \ge 1$.
\end{proof}

The compactness of a sequence of solutions to the regularized problem
\eqref{reg} in the case $\alpha \in (1,2)$ is a consequence of the
following estimate.
\begin{lemma}\label{lem:onemore}
Consider a $\gamma$-H\"older continuous function $G: \R \to \R$. Then,
for every $\alpha \in (1,2)$ such that $(2-\gamma) \alpha <2$, we have
\[ \|\nabla^{\alpha-1} G(u)\|_p \le C \|u\|_{H^{\frac{\alpha}{2},2}} \]
with 
\[ \frac{\alpha-1}{\gamma} - \frac{d}p = \frac{\alpha}2 - \frac{d}2.\]
\end{lemma}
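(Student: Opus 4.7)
The plan is to build up the required bound in three stages: use the H\"older continuity of $G$ to convert fractional Sobolev regularity of $u$ into fractional Sobolev regularity of $G(u)$, then apply a Sobolev embedding to reach the target smoothness and integrability, and finally invoke the mapping properties of $\nabla^{\alpha-1}$ as a Fourier multiplier of order $\alpha-1$.

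Concretely, inserting the $\gamma$-H\"older estimate $|G(a)-G(b)|\le L|a-b|^\gamma$ into the Gagliardo--Slobodeckij characterization of $H^{\alpha/2,2}$ yields
\[
\iint \frac{|G(u(x))-G(u(y))|^{2/\gamma}}{|x-y|^{d+\alpha}}\, dx\, dy \;\le\; L^{2/\gamma}\iint \frac{|u(x)-u(y)|^{2}}{|x-y|^{d+\alpha}}\, dx\, dy,
\]
so that $G(u)$ lies in the Slobodeckij space $W^{\gamma\alpha/2,\,2/\gamma}(\R^d)$ with seminorm controlled by a power of $\|u\|_{H^{\alpha/2,2}}$. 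The hypothesis $(2-\gamma)\alpha<2$ rewrites exactly as $\gamma\alpha/2>\alpha-1$, which is the strict positive regularity gap needed to apply a Sobolev embedding $W^{\gamma\alpha/2,\,2/\gamma}\hookrightarrow H^{\alpha-1,p}(\R^d)$ with $p$ tuned to the scaling relation displayed in the lemma. Since $\nabla^{\alpha-1}=\nabla\mathcal{I}_{2-\alpha}$ is a Fourier multiplier of order $\alpha-1$, it maps $H^{\alpha-1,p}(\R^d)$ continuously into $L^p(\R^d)$ for every $p\in(1,\infty)$, and chaining the three estimates gives the desired inequality.

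The main obstacle is the second step: the Slobodeckij space $W^{s,q}=B^{s}_{q,q}$ and the Bessel-potential space $H^{s,p}=F^{s}_{p,2}$ lie in different Littlewood--Paley scales (Besov vs.\ Triebel--Lizorkin) and genuinely coincide only at $p=q=2$. Embedding the former into the latter in the critical Sobolev range requires the classical but delicate machinery of function spaces (see e.g.\ \cite{Taylor-tools}); it is at this stage that the exponent bookkeeping has to be pushed through carefully so that the final $p$ matches the stated relation $\tfrac{\alpha-1}{\gamma}-\tfrac{d}{p}=\tfrac{\alpha-d}{2}$, and where one must check that the condition $(2-\gamma)\alpha<2$ is strong enough to keep all intermediate exponents in the admissible range $(1,\infty)$.
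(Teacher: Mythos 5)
Your skeleton agrees with the paper's --- a H\"older composition estimate, then a Sobolev embedding, then boundedness of $\nabla^{\alpha-1}$ as an order-$(\alpha-1)$ Fourier multiplier --- but you run the H\"older step through the Gagliardo double integral, landing in the Slobodeckij (Besov) space $W^{\gamma\alpha/2,\,2/\gamma}=B^{\gamma\alpha/2}_{2/\gamma,2/\gamma}$, whereas the paper runs it through the difference-quotient characterization of Triebel--Lizorkin spaces precisely so that the outer Lebesgue index $p$ stays fixed from the start. You single out the ``Besov versus Triebel--Lizorkin'' discrepancy as the main obstacle, but that particular worry is actually harmless: $B^s_{q,q}=F^s_{q,q}$, and Sobolev embeddings between $F$-spaces are insensitive to the third (microlocal) index, so once the differential dimension and the Lebesgue index line up the embedding into $H^{\alpha-1,p'}=F^{\alpha-1}_{p',2}$ is automatic.

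The genuine gap is the Lebesgue exponent. The critical embedding out of $F^{\gamma\alpha/2}_{2/\gamma,2/\gamma}$ into $F^{\alpha-1}_{p',2}$ is governed by $\gamma\alpha/2-d\gamma/2=(\alpha-1)-d/p'$, which gives $d/p'=(\alpha-1)-\gamma(\alpha-d)/2=\gamma\big[(\alpha-1)/\gamma-(\alpha-d)/2\big]=\gamma\, d/p$, i.e.\ $p'=p/\gamma$, \emph{not} the $p$ of the statement. Worse, the embedding $W^{\gamma\alpha/2,\,2/\gamma}\hookrightarrow H^{\alpha-1,p}$ for the lemma's smaller exponent $p$ can simply fail: on $\R^d$ one cannot lower the Lebesgue index, so the embedding requires $2/\gamma\le p$, and this can be violated --- take $d=1$, $\gamma=0.9$, $\alpha=1.8$ (so $(2-\gamma)\alpha=1.98<2$); then the stated relation gives $p\approx 2.05$ while $2/\gamma\approx 2.22$. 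The hypothesis $(2-\gamma)\alpha<2$ guarantees exactly $2/\gamma<p/\gamma$, i.e.\ $p>2$, which is the admissibility condition for \emph{your} embedding into $H^{\alpha-1,p/\gamma}$, not for the one you claim. So the exponent bookkeeping you postponed does not in fact close: your route produces a valid, scale-consistent estimate with the larger exponent $p/\gamma$ --- and that would still suffice for the way Lemmas~\ref{lem:tek2} and~\ref{lem:tek3} invoke the result (they only need some $p\in(1,\infty)$) --- but it does not establish the lemma with the exponent as stated. Matching the stated $p$ is exactly what the Triebel--Lizorkin H\"older-composition estimate used in the paper is for.
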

\begin{remark} 
Note that the assumptions of Lemma \ref{lem:onemore} 
 ensure that $p>2$. 
\end{remark}
\begin{proof}[Proof of Lemma \ref{lem:onemore}.]
We use
successively the fact that $H^{s,p} (\R^d) = F^s_{p,2} (\R^d)$
\cite[p.14]{rs96}, the characterization of Triebel-Lizorkin spaces
with difference quotients \cite[p.41]{rs96}, and finally known embedding
theorems for Besov and Triebel-Lizorkin spaces \cite[p.31]{rs96} in order to derive
\[ \|\nabla^{\alpha-1} G(u)\|_p \le C \| G(u)\|_{F^{\alpha-1}_{p,2}} \le C [G]_\gamma \| u
\|_{F^{\frac{\alpha-1}\gamma}_{p,2\gamma}} \le C \|u\|_{H^{\frac{\alpha}{2},2}} ,\]
where 
 $[G]_\gamma := \sup_{r \neq s} {|G
  (r)-G (s)|}/{|r-s|^\gamma} $
and $p>2$ is chosen so that the so-called differential dimension is
constant; this yields the condition appearing in the statement of the
lemma. The proof is now complete.
\end{proof}

\begin{lemma}\label{lem:tek2}
For $\alpha \in (1,2)$ and $(3-m) \alpha <2$, there exists $p \in
(1,\infty)$ such that 
\begin{equation}\label{estim:tek2}
\| \nabla^{\alpha-1} G(u)\|_p \le C 
\|u\|_{H^{1,2}(\R^d)} 
\end{equation}
where  $C$ depends on the $\gamma$-H\"older seminorm of $G$ in
$(0,\|u\|_\infty)$ {\rm (}with $\gamma = \min \{m-1,1\}${\rm )}. 
\end{lemma}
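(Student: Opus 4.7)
The plan is to derive Lemma~\ref{lem:tek2} as a direct consequence of Lemma~\ref{lem:onemore} (lem:onemore), using the continuous embedding $H^{1,2}(\R^d)\hookrightarrow H^{\alpha/2,2}(\R^d)$ which holds because $\alpha/2<1$. So essentially there are three small steps: check that the hypothesis of the previous lemma is met, reduce from ``locally Hölder'' to ``globally Hölder,'' and then use the Sobolev embedding.

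First I would verify that the condition $(3-m)\alpha<2$ implies the hypothesis $(2-\gamma)\alpha<2$ of Lemma~\ref{lem:onemore}. Indeed, $\gamma=\min\{m-1,1\}$, so when $m\ge 2$ one has $\gamma=1$ and $(2-\gamma)\alpha=\alpha<2$ automatically; when $1<m<2$ one has $\gamma=m-1$ and $(2-\gamma)\alpha=(3-m)\alpha$, which is $<2$ by assumption.

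Second, Lemma~\ref{lem:onemore} as stated assumes $G$ is globally $\gamma$-Hölder continuous, whereas here we only control the Hölder seminorm on $(0,\|u\|_\infty)$. This is harmless: replace $G$ by an extension $\widetilde G$ that coincides with $G$ on $[-\|u\|_\infty,\|u\|_\infty]$ and is extended by constants outside this interval (which preserves $\gamma$-Hölder continuity and the Hölder seminorm). Since $\widetilde G(u)=G(u)$ pointwise, applying Lemma~\ref{lem:onemore} to $\widetilde G$ gives
\[
\|\nabla^{\alpha-1}G(u)\|_p\le C\,\|u\|_{H^{\alpha/2,2}},
\]
where $p$ is determined by the scaling relation $\frac{\alpha-1}{\gamma}-\frac{d}{p}=\frac{\alpha}{2}-\frac{d}{2}$, and the constant depends only on the local $\gamma$-Hölder seminorm of $G$ on $(0,\|u\|_\infty)$.

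Finally, since $\alpha/2<1$, the Bessel potential spaces obey the continuous embedding $H^{1,2}(\R^d)\subset H^{\alpha/2,2}(\R^d)$, so
\[
\|\nabla^{\alpha-1}G(u)\|_p\le C\,\|u\|_{H^{\alpha/2,2}}\le C\,\|u\|_{H^{1,2}},
\]
which is \eqref{estim:tek2}. I do not expect any genuine obstacle; the only slightly subtle point is bookkeeping the exponent $p$ from Lemma~\ref{lem:onemore} and checking it lies in $(1,\infty)$ (in particular $p>2$, as noted in the remark after that lemma).
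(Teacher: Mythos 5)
Your proof is correct and follows essentially the same route as the paper, whose entire proof is the single line ``Apply Lemma~\ref{lem:onemore} with $\gamma=m-1$.'' You have simply made explicit the details the paper leaves implicit: capping $\gamma$ at $\min\{m-1,1\}$ so Hölder continuity is meaningful when $m\ge 2$, extending $G$ by constants to pass from a local to a global Hölder bound, verifying $(3-m)\alpha<2\Rightarrow(2-\gamma)\alpha<2$, and invoking the embedding $H^{1,2}\subset H^{\alpha/2,2}$.
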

\begin{proof}
Apply Lemma~\ref{lem:onemore} with $\gamma = m-1$. 
\end{proof}
\begin{lemma}\label{lem:tek3}
If $\alpha \in (1,2)$ and $m > \alpha$, there exists $p \in
(1,\infty)$ and $r \in (1,\infty)$ {\rm (}and $r > m-1${\rm )} such that
\begin{equation}\label{estim:tek3}
\| \nabla^{\alpha-1}  |u|^{m-1} \sgn u\|_p \le 
C \| |u|^{\frac{r+m-1}2} \sgn u\|_{H^{\frac\alpha2,2}(\R^d)}.
\end{equation}
\end{lemma}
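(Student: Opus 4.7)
The plan is to reduce the statement to Lemma~\ref{lem:onemore} by rewriting $|u|^{m-1}\sgn u$ as a H\"older composition of a fractional power of $|u|$. More precisely, set $v = |u|^{(r+m-1)/2}\sgn u$ and $\beta = 2(m-1)/(r+m-1)$; then
\[ |u|^{m-1}\sgn u = |v|^{\beta}\sgn v = G(v), \qquad G(z) := |z|^{\beta}\sgn z. \]
As soon as $r > m-1$, one has $\beta < 1$, and then $G:\R\to\R$ is globally $\beta$-H\"older continuous on $\R$ with an explicit finite seminorm $[G]_\beta$.

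I would then apply Lemma~\ref{lem:onemore} with $u$ replaced by $v$ and H\"older exponent $\gamma = \beta$. The hypothesis $(2-\beta)\alpha < 2$ of that lemma translates, after clearing denominators in $\beta = 2(m-1)/(r+m-1)$, into
\[ r < \frac{m-1}{\alpha-1}. \]
So I would look for $r$ in the interval $I := \bigl(\max\{1,m-1\},\,(m-1)/(\alpha-1)\bigr)$, and define the exponent $p \in (2,\infty)$ by the scaling identity of Lemma~\ref{lem:onemore}, namely $(\alpha-1)/\beta - d/p = (\alpha-d)/2$; since $\beta < 1$ and $\alpha \in (1,2)$, this forces $p > 2$, hence $p\in (1,\infty)$ is admissible.

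The only point requiring a short verification is that $I$ is non-empty precisely when $m > \alpha$. In the case $m \ge 2$ this is automatic: $(m-1)/(\alpha-1) > m-1 \ge 1$ follows from $\alpha < 2$. In the delicate case $1 < m < 2$, the lower endpoint of $I$ is $1$, and $(m-1)/(\alpha-1) > 1$ is equivalent to $m > \alpha$, which is exactly the standing hypothesis. For any such $r$, Lemma~\ref{lem:onemore} yields
\[ \|\nabla^{\alpha-1}G(v)\|_p \le C\,[G]_\beta\,\|v\|_{H^{\alpha/2,2}(\R^d)}, \]
which is precisely \eqref{estim:tek3}. I do not expect any serious obstacle; the only substantive content is the elementary algebraic check that the hypothesis $m > \alpha$ is the sharp condition making the interval of admissible $r$ non-empty.
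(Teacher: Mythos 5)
Your proposal is correct and follows essentially the same route as the paper: substitute $v = |u|^{(r+m-1)/2}\sgn u$, apply Lemma~\ref{lem:onemore} with H\"older exponent $\gamma = 2(m-1)/(r+m-1)$, and reduce the hypothesis $(2-\gamma)\alpha<2$ to $r < (m-1)/(\alpha-1)$. You are slightly more explicit than the paper in tracking the requirement $r>m-1$ (to ensure $\gamma<1$, so that $z\mapsto |z|^\gamma\sgn z$ is genuinely H\"older) and in checking that the admissible interval for $r$ is nonempty both when $m\ge 2$ (automatic from $\alpha<2$) and when $1<m<2$ (exactly $m>\alpha$), but the argument is the same.
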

\begin{proof}
Consider $v = |u|^{\frac{r+m-1}2} \sgn u$. Then estimate \eqref{estim:tek3} is
equivalent to the following one
\[ \| \nabla^{\alpha-1} |v|^{\frac{2(m-1)}{r+m-1}} \sgn v \|_p \le C
\| v\|_{H^{\frac{\alpha}2,2}(\R^d)}.\] We then apply
Lemma~\ref{lem:onemore} with $\gamma = \frac{2(m-1)}{r+m-1}$ and get
the desired result if $\gamma > 2(1-\frac1\alpha)$, or equivalently,
\[ r < \frac{m-1}{\alpha-1}.\]
Here, in order to find $r \in\big(1,\frac{m-1}{\alpha-1}\big)$, the condition $m
> \alpha$ is needed. The proof is now complete.
\end{proof}

\section{Proof of Theorem~\ref{thm:hyper}}

This section is devoted to the proof of our main result on the
existence of solutions to problem \eqref{eq:main}--\eqref{eq:ic}
satisfying decay estimates \eqref{u:decay}.

\begin{proof}[Proof of Theorem~\ref{thm:hyper}] We first consider very
regular initial data, \textit{i.e.} we assume that $u_0$ satisfies
\eqref{icc}.  Then, this condition is relaxed by considering initial
data that are merely integrable.

The proof proceeds in three steps: passage to the limit with the
parameter of the regularization of the nonlinearity, then with the
parameter of the parabolic regularization, and finally --- stability
with respect to  initial data.

\subsection*{Passage to the limit as $\eps \to 0$.}
Consider $u_0$ satisfying \eqref{icc}. From Theorem~\ref{thm:reg}, we
have a sequence of solutions $u_\e$ of \eqref{reg} for $G_\eps$
defined in such a way that there exists $C>0$ such that for all $\e \in (0,1]$,
\[ [G_\e]_\gamma := \sup_{r \neq s} \frac{|G_\e
  (r)-G_\e (s)|}{|r-s|^\gamma} \le C\] where $\gamma = \min
\{m-1,1\}$. Thanks to Corollary~\ref{cor:lp-norms}, there  exists also a
constant $C>0$ (depending on $\delta>0$) such that for all $\e \in (0,1]$,
\begin{equation}
\|u_\e\|_{L^\infty ((0,T) \times \R^d)} \le C, \label{estim:linftybis}  \qquad
\|u_\e \|_{L^2((0,T),H^{1,2}(\R^d))} \le C. 
\end{equation}
Hence, we can construct a sequence $(\eps_n)_n$ such that 
\[ u_n \rightharpoonup u \text{ in } L^2 ((0,T),H^{1,2} (\R^d))\]
where $u_n$ denotes $u_{\e_n}$. 
Moreover, for all $R>0$, the embedding 
$ H^{1,2} (B_R) \subset L^2(B_R) $
is dense and compact. Using \eqref{estim:linftybis} we also obtain
\[ \lim_{\text{meas}(E)\to 0, E \subset [0,T]} \int_E \int_{B_R} |u_n
(t,x)|^2 \dt \dx =0. \] Hence, we infer from \cite{RT-AML} (which
contains an optimal result on the compactness for Hilbert space valued
vector functions) that, up to a subsequence, for every $R>0$,
\[ u_n \to u \text{ in } L^2 ((0,T)\times B_R).\]
Moreover, passing again to a subsequence if necessary, we can assume that for every $R>0$,
\begin{equation}
\nabla u_n \rightharpoonup \nabla u \text{ in } L^2 ((0,T)\times B_R),  \ \
u_n \to u  \text{ for a.e. } (t,x) \in Q_T, 
\end{equation}
which imply  for all $\varphi \in {\mathcal C}^\infty_c (\bar{Q}_T)$,
\begin{align*}
\iint_{Q_T} u_n \partial_t \varphi \dt \dx &\to \iint_{Q_T} u \partial_t
\varphi \dt \dx, \\
\iint_{Q_T} \nabla u_n \cdot \nabla \varphi \dt \dx &\to \iint_{Q_T} \nabla u
\cdot \nabla \varphi\dt \dx. 
\end{align*}
As far as the nonlinear term in equation \eqref{reg} is concerned, we have
\[ |u_n| \to |u| \text{ for a.e. } (t,x) \in Q_T.\]
Hence,  the dominated convergence
theorem combined with  \eqref{estim:linftybis} imply that 
\[ |u_n| \nabla \varphi \to |u| \nabla \varphi \text{ in }
L^2 ((0,T),L^{q'} (\R^d))\] for all $q' \in (1,\infty)$.  
Lemmas~\ref{lem:tek1} and \ref{lem:tek2} imply that
\begin{equation}\label{estim:nl}
 \nabla^{\alpha-1}G_{\e_n} (u_n) \text{ is bounded in } L^q(Q_T)
\end{equation}
for some $q \in (1,\infty)$.

We deduce from \eqref{estim:nl} that we can extract a weakly
converging subsequence of $(\nabla^{\alpha-1}G_{\e_n} (u_n))_n\subset
L^q(Q_T)$ which limit is denoted by $L$. Since
\[G_{\e_n} (u_n) \to |u|^{m-1} \sgn u \qquad \text{ a.e. in } Q_T,\]
we have 
\begin{equation}
\nabla^{\alpha-1}G_{\e_n} (u_n) \rightharpoonup L \text{ in } L^q (Q_T),\quad\text{where}\quad 
L=\nabla^{\alpha-1} (|u|^{m-1}\sgn u) \text{ in } \mathcal{D}'(Q_T).\nonumber
\end{equation}
In particular, $\nabla^{\alpha-1} (|u|^{m-1}\sgn u) \in L^q(Q_T)$ holds,  and
\[ \iint_{Q_T} \nabla^{\alpha-1} G_{\e_n} (u_n) \cdot |u_n|
\nabla \varphi \dt \dx \to \iint_{Q_T} \nabla^{\alpha-1} (|u|^{m-1} \sgn u)
\cdot |u|\nabla \varphi \dt \dx \] 
as $n \to \infty$. Hence, we obtain
a weak solution of \eqref{reg} with $G(r)=|r|^{m-1} \sgn r$ for $u_0$
satisfying \eqref{icc}.
\medskip

By the Fatou lemma, we derive from Corollary~\ref{cor:lp-norms}
and Proposition~\ref{prop:SV} the following estimates: for a.e. $t>s>0$,
$q \in [1,\infty]$, $p \in (1,\infty)$, uniformly in $\delta\in(0,1]$, 
\begin{align}
\label{estim:lp}
\| u(t)\|_q  \le \|u_0\|_q, \\
\label{estim:compact}
\|u(t)\|_p^p + 
\frac{4p(p-1)(m-1)}{(p+m-1)^2} \iint_{(s,t) \times \R^d} \left|\nabla^{\frac\alpha2}\left(
\sgn u\,|u|^{\frac{p+m-1}2} \right) \right|^2 \ds \dx  \le \|u (s)\|_p^p, \\
\label{estim:contractive}
\|u(t)\|_q\le C \|u_0\|_1^{\frac{d(m-1)/q +{\alpha}}{d(m-1)+\alpha}}
t^{-\frac{d}{d(m-1)+\alpha}\big(1-\frac{1}q\big)}, \\
\label{estim:grad}
2 \delta \iint_{Q_t} |\nabla u|^2 \ds \dx \le \|u_0\|_2^2.
\end{align}

Remark that now we can derive the hypercontractivity estimates in the
case $m \in (1,3)$ from inequality \eqref{estim:compact}.  We also
know that the solution $u$ is nonnegative if $u_0$ is so.

\subsection*{Passage to the limit as $\delta \to 0$} Let $u=u^\delta$
denote the solution constructed above. 
We consider 
\[v^\delta = |u|^{m} \sgn u. \]
Using \eqref{estim:compact} with $s=0$ and $p=m+1 >1$, we get that,
for all $T>0$,
\[ v^\delta \text{ is bounded in } L^2 ((0,T), H^{\frac\alpha2,2}
(\R^d)). \]
Hence, there exists a subsequence $\delta_n \to 0$ such that 
\[ v_n \rightharpoonup v \quad \text{ in } L^2 ((0,T), H^{\frac\alpha2,2}
(\R^d)),\]
where  $v_n$ denotes $v^{\delta_n}$. Moreover, inequality \eqref{estim:lp}
with $q = \infty$  implies that for $R>0$ 
\[ \lim_{\text{meas}(E)\to 0, E \subset [0,T]} \int_E \int_{B_R} |v_n
(t,x)|^2 \dt \dx =0. \]
Hence, we can use \cite{RT-AML} once more,   and conclude that 
for all $R>0$ 
\[ v_n \to v \text{ in } L^2 ((0,T)\times B_R).\]
Moreover, passing to a subsequence if necessary, we can assume that 
\[u_n \to u  \text{ for a.e. } (t,x) \in Q_T,\]
where $u_n$ denotes $u^{\delta_n}$. 
Now Lemmas~\ref{lem:tek1} and \ref{lem:tek3} imply that 
\begin{equation}\label{estim:nl+}
 \nabla^{\alpha-1} (|u_n|^{m-1} \sgn u_n) \text{ is bounded in } L^q(Q_T)
\end{equation}
for some $q >1$. Hence, we can pass to the limit in the nonlinear
term in the weak formulation of equation \eqref{reg}.

To complete this proof, notice that inequality \eqref{estim:grad} implies that 
\[ \delta_n \nabla u_n \to 0 \text{ in } L^2(Q_T).\]
In particular,
\[\delta_n \iint_{Q_T} \nabla u_n \cdot \nabla \varphi \dt \dx \to 0\]
as $n \to \infty$. We thus conclude that $u$ is a weak solution of
\eqref{eq:main}--\eqref{eq:ic}. 

The conservation of mass, the positivity property, the monotonicity of
$L^p$-norms, and the hypercontractivity estimates follow from
\eqref{estim:lp}--\eqref{estim:grad} in a standard way.

\subsection*{Stability with respect initial conditions}

Assume now that $u_0 \in L^1(\R^d) \cap L^\infty (\R^d)$. Consider an
approximating sequence $u_0^n$ satisfying \eqref{icc}. Then, the
sequences of $L^p$-norms are bounded and we can pass to the limit as
we did already 
when letting $\delta \to 0$. We also recover the expected
properties of the weak solution. 

Assume finally that $u_0$ is merely integrable. Then $T_n (u_0) \in
L^1(\R^d) \cap L^\infty (\R^d)$ where 
\[T_n(r) = \min\{\max\{r,-n\},n\}.\]
 Hence, there exists a weak solution $u_n$ of
\eqref{eq:main}. In view of the hypercontractivity estimates, we can
extract a~converging subsequence in $(\eta,T) \times \R^d$, with arbitrary
$\eta>0$, to a function $u^\eta$ in the following sense
\[ u_n \to u^\eta \text{ in } L^2 ((\eta,T) \times B(0,R)).\]
 Moreover,
the $L^1$-norm of $u_n$ is bounded in $(0,T) \times \R^d$. Then we have 
\begin{align*}
\left|\iint_{(0,\eta) \times \R^d} u_n \partial_t \vphi \dt \dx
\right| &\le C \eta, \\ 
\iint_{(\eta,T) \times \R^d} u_n \partial_t \vphi 
& \to \iint_{(\eta,T) \times \R^d} u^\eta \partial_t \vphi\\
\iint_{(\eta,T)\times \R^d} \nabla^{\alpha-1}
(|u_n|^{m-1} \sgn u_n) \cdot \nabla \vphi \dt \dx &\to
\iint_{(\eta,T)\times \R^d} \nabla^{\alpha-1} (|u^\eta|^{m-1} \sgn
u^\eta) \cdot \nabla \vphi \dt \dx
\end{align*}
as $n \to \infty$ for each $\eta>0$.
We can now conclude the
proof of Theorem~\ref{thm:hyper}
 through a  diagonal procedure.
\end{proof}

\appendix

\section{Proof of Proposition~\ref{prop:useful}}
\label{app:useful}

\begin{proof}[Proof of Proposition~\ref{prop:useful}]
We fix an arbitrary $p \ge p_\alpha=\frac{d}{\alpha-1}$. First, we
remark that
\[
\iint (\partial_t u) \phi \dt \dx+ \int |u| \nabla^{\alpha-1} G (u)
 \cdot \nabla \phi \dt \dx + \delta \iint \nabla u \cdot \nabla \phi
\dt \dx= 0
\]
for all $\phi$ compactly supported in time and in
$L^1((0,T),H^{1,p'}(\R^d))$. Recall that we have $\partial_t u \in
\mathcal{C}[0,T], L^p (\R^d))$ for $p \ge p_\alpha =
\frac{d}{\alpha-1}$ and $\Psi(u)=|u| \nabla^{\alpha-1} G (u) \in L^\infty
((0,T),L^p (\R^d))$. To justify the previous equality, it is enough to
mollify the function $\phi$ in time and space, and remark that the
mollified function $\phi_\eta$ satisfies (for $\eta$ small enough)
$\phi_\eta (0,x)=\phi_\eta (T,x)=0$, $u \phi_\eta \in
\mathcal{C}^1((0,T),L^1(\R^d))$ and
\[ \iint u (\partial_t \phi_\eta) \dt \dx = - \iint (\partial_t u) \phi_\eta \dt \dx.\]
Letting $\eta \to 0$ and using the regularity of $\Psi(u)$ yields the
desired result.

Next, consider $\phi (\tau,x)= \varphi'(u(\tau,x)) \Theta_\eta(\tau)$,
 where $\Theta_\eta$ is truncation function in time of
$[s,t]$. Remark that $\phi \in L^1((0,T),H^{1,p'}(\R^d))$; indeed,
$\varphi'(u) \in L^\infty ((0,T),L^{p'} (\R^d))$, and we also have
for all $w \in (H^{1,p} \cap L^1 \cap L^\infty) (\R^d)$
\[
\|\varphi''(w) \nabla w\|_{p'} \le \|\nabla w \|_p \|\varphi''(w)\|_q
\]
with $\frac1q=1-\frac2p$. 
As far as $\Theta_\eta$ is concerned, we choose it such that $\Theta_\eta
(s)=0$ and $\Theta_\eta'(\tau) = \rho_\eta (\tau-s-\eta) - \rho_\eta (\tau-t+\eta)$
where $\rho_\eta$ is an even mollifier supported in $[-\eta,\eta]$.
Now we can write
\begin{align*}
 \iint (\partial_t u) \varphi'(u) \Theta_\eta \dta \dx
&= \iint \partial_t (\varphi(u)) \Theta_\eta \dta \dx \\
& =  \iint \varphi(u) (\rho_\eta (\tau-t+\eta) -\rho_\eta (\tau-s-\eta)) \dta \dx.
\end{align*}
Hence,
\[
\iint (\partial_t u) \varphi'(u) \Theta_\eta \dta \dx \to \int \varphi (u(t,x))
\dx- \int \varphi (u(s,x))\dx .\] 
Moreover,
\[ |u| \nabla \phi (u)= |u| \varphi''(u) \nabla u = \nabla (\psi (u)) \quad
\text{ in } L^p(\R^d)\] 
thanks to the Stampacchia theorem ($u \in
L^\infty(\R^d)$ hence $\varphi'$ and $\psi$ locally Lipschitz is enough). Hence
\begin{eqnarray*}
 \int_s^t \int |u|\nabla^{\alpha-1} G (u) \cdot
  \nabla \phi(u) \dta \dx&  =  &
\int_s^t \int \nabla^{\alpha-1} G (u) \cdot \nabla (\psi (u))\dx
\dta. 
\end{eqnarray*}
It remains to prove that 
\begin{equation}\label{ineq:fraclap} 
 \int_s^t \int \psi(u) (-\Delta)^{\frac\alpha2} (G (u))\dta \dx
\le 
\int_s^t \int \nabla^{\alpha-1} G (u) \cdot \nabla (\psi (u))\dx
\dta < \infty.
\end{equation}
The last inequality comes from the computations we made above. As far
as the second inequality is concerned, we use \eqref{frac:int} to write
\[\int_s^t \int \nabla^{\alpha-1} G (u) \cdot \nabla (\psi (u))\dx
\dta = \lim_{\eta \to 0} \int_s^t \iint (G (u)(\tau,x) -G (u)(\tau,y))
F_\eta (y-x) \cdot \nabla (\psi (u(\tau,x))) \dx \dy \dta\]
where $F_\eta$ is defined as follows
\[ F_\eta (z) = C_{d,\alpha} \frac{z}{\eta^{d+\alpha}+ |z|^{d+\alpha} }.\]
Through an integration by parts, we now get
\begin{multline*}
\int_s^t \iint (G(u)(\tau,x) -G (u)(\tau,y)) F_\eta (y-x) \cdot \nabla (\psi (u(\tau,x))) \dx \dy \dta \\
= -\int_s^t \iint \nabla G (u)(\tau,x) \cdot F_\eta
(y-x) \psi (u(\tau,x)) \dx \dy \dta \\
+ \int_s^t \iint ( u(\tau,x)-u(\tau,y)) (\nabla \cdot F_\eta)
(y-x) \psi (u(\tau,x)) \dx \dy \dta.
\end{multline*}
The first term on the right-hand side equals $0$ since $F_\eta$ is
odd. Moreover, we have 
\[ \nabla \cdot F_\eta (z) =  C_{d,\alpha}  \frac{\eta^{d+\alpha} +
  (d+\alpha+1)|z|^{d+\alpha}}{(\eta^{d+\alpha} +
  |z|^{d+\alpha})^2}.\] 
Hence, the Fatou lemma yields  \eqref{ineq:fraclap}.
 The proof of the proposition is now complete.
\end{proof}

\section{From \eqref{rec-kappa} to the boundedness of $\kappa_n$}
\label{app:recursive}

Consider $l_n = \log \kappa_n$ and write \eqref{rec-kappa} as follows
\[ l_{n+1} \le  a_n + b_n l_n \]
with 
\begin{align*}
a_n & = \frac{1}{2^{n+1}\left(\frac\alpha{d} +\frac{m-1}{2^n}\right)}
\log \left( \frac{2^n \left(\frac{2\alpha}d + \frac{m-1}{2^n}
  \right)+1}{K_n \left(\frac\alpha{d} + \frac{m-1}{2^n}\right)}
\right); \\
 b_n & = 1 - \frac{m-1}{\frac\alpha{d} 2^{n+1} + 2 (m-1)}.
\end{align*}
Remark next that 
\[
a_n  \le C \frac{n}{2^n} \quad \text{ and } \quad
b_n  \le 1 - \frac{C}{2^n}.
\]
In particular 
\[ \sum_{n \ge k} a_n <  \infty \quad \text{ and } \quad 
\prod_{n \ge k} b_n < \infty \]  
Using the fact that $b_n \le 1$, we get
\[ l_n \le \sum_{n \ge k} a_n + \left(\prod_{n \ge k} b_n \right)a_{k_0}.\]
Hence, $l_n$ does not blow up, and neither does $\kappa_n$. 

\section{Proof of Lemma~\ref{lem:integral}}

\begin{proof}[Proof of Lemma~\ref{lem:integral}]
First, we remark that we can reduce to the case $g(t)=t$ and $K
=1$ through a change of variables. 

The proof is simple if $f$ is smooth. If $f$ is not, extend $f$ by $0$
to $\R$ and consider a mollifier $\rho_\eps$. Then write for $t_1 <
t_2$,
\[ f(t_1-s) + K \int_{-\infty}^{t_1-s} f^{\gamma+1} (\tau) \dta \le
f(t_2-s).\]
Now integrate against $\rho_\eps (s)$ and use the Jensen inequality to
get 
\[f_\eps (t_1)+ K \int_{-\infty}^{t_1} f_\eps^{\gamma+1} (\tau) \dta
\le f_\eps (t_2). \] We are now reduced to the case $f=f_\eps$, which
is smooth.  Passing to the limit, the proof is now complete.
\end{proof}

\bibliographystyle{siam} \bibliography{fpm1}

\end{document}